\documentclass{amsart}

\usepackage{amsmath}
\usepackage{amsfonts}
\usepackage{amssymb}
\usepackage{amsthm}
\usepackage{amscd}
\usepackage[all,cmtip, line]{xy}

\newtheorem{thm}{Theorem}
\newtheorem*{thmA*}{Theorem A}
\newtheorem*{thmAB*}{Theorem 30 (Theorem A)}
\newtheorem{lem}[thm]{Lemma}
\newtheorem{prop}[thm]{Proposition}

\newtheorem*{defin}{Definition}

\newcommand{\su}{Gl_2(\Q)\backslash(\mathbb{C}\backslash\mathbb{R})\times\glaf
/U}

\newcommand{\gla}{GL_2(\mathbf{A})}
\newcommand{\glaf}{GL_2(\af)}
\newcommand{\glafp}{GL_2(\af^p)}
\newcommand{\upi}{U^p \times \Gamma_1(p^n)}
\newcommand{\upq}{U^p \times \Gamma_1(\mathbf{q})}
\newcommand{\gl}{G_{\mathbb{Q}_l}}
\newcommand{\glzp}{GL_2(\mathbb{Z}_p)}
\newcommand{\glzl}{GL_2(\mathbb{Z}_l)}
\newcommand{\glzh}{GL_2(\hat{\mathbb{Z}})}
\newcommand{\glpl}{GL_2^+(\mathbb{Q})}
\newcommand{\glql}{GL_2(\Q_l)}
\newcommand{\wl}{W_{\mathbb{Q}_l}}
\newcommand{\il}{I_{\mathbb{Q}_l}}
\newcommand{\zh}{\hat{\mathbb{Z}}^*}

\newcommand{\Zp}{\mathbb{Z}_p}
\newcommand{\Qp}{\mathbb{Q}_p}
\newcommand{\af}{\mathbf{A}_f}
\newcommand{\Q}{\mathbb{Q}}

\newcommand{\Cp}{\mathbb{C}_p}
\newcommand{\B}{\mathcal{B}}
\newcommand{\Bs}{\mathcal{B}^*}

\newcommand{\xu}{X(U(N)^p\times \Gamma_1(\mathbf{q}))_{\geq p^{-r}}}
\newcommand{\xz}{X(U(N)^p\times \Gamma_0(\mathbf{q}))_{\geq p^{-r}}}

\begin{document}
\title{\textbf{LOCAL TO GLOBAL COMPATIBILITY ON THE EIGENCURVE}
$(l\neq p)$}
\author{Alexander Paulin}
\begin{abstract}
We generalise Coleman's construction of Hecke operators to define an action
of $\glql$ on the space of finite slope overconvergent $p$-adic modular forms
($l\neq p$).  In
this way we associate to any $\Cp$-valued point on the tame level $N$ Coleman-Mazur eigencurve
an admissible smooth representation of
$\glql$ extending the classical construction.  Using the Galois theoretic
interpretation of the eigencurve we associate a 2-dimensional Weil-Deligne
representation to such points and show that away from a discrete set they
agree under the Local Langlands correspondence.
\end{abstract}
\maketitle
\tableofcontents
\section{Introduction}
In the mid 1990s, Coleman wrote a series of papers (see \cite{CCO}, \cite{CME},
\cite{BMF}) which rejuvenated the study of $p$-adic properties of modular
forms as initiated by Katz (\cite{K}) and Serre (\cite{S}) almost thirty
years earlier.  Perhaps the most profound output of this work was the construction
of the eigencurve (\cite{CME}); a type of universal parameter space for both
overconvergent $p$-adic eigenforms and residually modular Galois representations.
In recent
years much work has been done to understand how these objects behave
over the eigencurve.    For example, work of Kisin {\cite{KIS}) applies Fontaine
theory to $p$-adic Galois representations on the eigencurve to deduce certain cases of the
Fontaine-Mazur conjecture.  In this paper we study how automorphic representations
attached to classical cusp
forms (away from $p$ and $\infty$) vary across the eigencurve.

More precisely, fix a prime $p$ and let $\mathbf{A}$ denote the adeles over $\mathbb{Q}$. To $f \in \mathcal{S}_k(\Gamma_1(N),\chi)$, a normalised classical cuspidal eigenform, we can associate (non-canonically)
a cuspidal algebraic automorphic representation, $\pi_f$, of $GL_2(\mathbf{A})$. The finite part of this representation in naturally the restricted tensor
product of smooth irreducible representations $\pi_{f,l}$ of $\glql$
 as $l$ ranges over all rational primes. 

Also associated to $f$ is a compatible system of Galois representations: $$\rho_f: G_{\mathbb{Q}} \rightarrow GL_2(\bar{\mathbb{Q}}_p).$$  The global Langlands philosophy
links these two objects, but the correspondence can be seen at the local
level.  If we restrict $\rho_f$ to a decomposition group at $l\neq p$ then
Grothendieck's abstract monodromy theorem attachs to $f$ a Weil-Deligne representation
$(\rho_{(f,l)}, N_f)$.
Work of Carayol (\cite{C}) shows that this matches with $\pi_{f,l}$ under the (correctly normalised) local Langlands correspondence.
This  expresses the compatibility of the local and global Langlands correspondences.
In this paper we generalise this result to overconvergent $p$-adic eigenforms.

Let $\mathcal{E}$ denote the reduced tame level $N$ cuspidal eigencurve (as constructed
in \cite{CME}, \cite{BE}). To any $\Cp$-valued point (equivalent to a overconvergent
eigenform of tame level $N$ of some weight $\kappa$) we associate a admissible smooth (not necessarily irreducible) representation
of $\glql$ for   $l\neq p$.  This is done by defining a \textit{weight} zero
action using the moduli problem of elliptic curves and then introducing a weight $\kappa$
twist.  As in Coleman's construction of Hecke operators this twist factor
is obtained by $p$-adically deforming the $q$-expansions of classical Eisenstein series. In the study of the $p$-adic properties of automorphic forms this is
a common theme - embedding objects in large ambient $p$-adic Banach space
which come equipped with some kind of \text{weight} zero action and then
defining arbitrary weights by introducing certain twist factors (see \cite{CHE},
\cite{BE}).

The Galois theoretic construction of $\mathcal{E}$ ensures that there is
an admissible open dense subspace $\mathcal{E}^o \subset \mathcal{E}$ together with
an admissible
covering by open affinoids $\mathcal{U}$ such that for any $Sp(A) \in \mathcal{U}$
there exists a continuous Galois representation
$$\rho: G_{\mathbb{Q}} \rightarrow GL_2(A).$$
This representation is unique up to isomorphism over $A$.
$\mathcal{E} \backslash \mathcal{E}^o$ is the discrete set  corresponding
to pseudocharacters which give rise to reducible representations.   $\mathcal{E}^o$ contains all the classical cuspidal
locus, which forms a Zariski dense set.  If $\mathcal{Z} \subset \mathcal{E}^o$ 
is an irreducible
component (see \cite{BC}) then there is naturally an admissible cover of
$\mathcal{Z}$ with the above property. 
A mild generalisation of Grothendieck's monodromy theorem allows us to construct
a family of two dimensional Weil-Deligne representations across $\mathcal{Z}$,
which agrees with the ordinary construction on the classical locus. Thus
for any $\kappa \in   \mathcal{E}^o (\Cp)$ we have $\pi_{f_{\kappa},l}$, an  admissible
smooth
representation of $\glql$ and a two dimensional  Weil-Deligne representation
$(\rho_{(\kappa,l)}, N_{\kappa})$. Let $\pi_m$ be the modification of the
Tate normalised local Langlands correspondence introduced in  \S5.3.
\\
Our main result is the following:
\begin{thmA*}
Away from a discrete set of points local to global compatibility holds on
$\mathcal{E}^o$, i.e.
$$\pi_m(\rho_{\kappa}, N_{\kappa}) \cong \pi_{f_{\kappa},l}$$ for all $\kappa
\in \mathcal{E}^o(\Cp)$ away from some discrete set.  More precisely if $\mathcal{Z}
\subset \mathcal{E}^o$ (taken over $\Cp$) is an irreducible component then either:
\begin{enumerate}
\item $\mathcal{Z}$ is Supercuspidal.
In this case local to global compatibility holds on all $\mathcal{Z}$.
\item $\mathcal{Z}$ is Special. In this case local
to global compatibility holds everywhere except at points where monodromy
vanishes.  For such $\kappa$, $\pi_{f_{\kappa},l}$ is the unique special irreducible sub-representation of $\pi_m(\rho_{\kappa}, N_{\kappa})$.
\item $\mathcal{Z}$ is Principal Series.  Here local to global compatibility holds except at points where
the ratio of the Satake parameters becomes $l^{ \pm 1}$.  At such points all
we know is that these is a smooth admissible representation $\pi$ and a $\glql$
equivariant
surjection $\pi \rightarrow   \pi_{f_{\kappa},l}$ where the semi-simplification of
$\pi$ if isomorphic to the semi-simplification of $\pi_m(\rho_{\kappa}, N_{\kappa})$.
\end{enumerate}
\end{thmA*}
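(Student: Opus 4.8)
The plan is to leverage the two-variable structure already in hand: an admissible cover of the irreducible component $\mathcal{Z}$ by affinoids $Sp(A)$ carrying a Galois representation $\rho: G_{\mathbb{Q}} \to GL_2(A)$, and the family of Weil--Deligne representations $(\rho_{(\kappa,l)}, N_\kappa)$ interpolating the classical ones. On the automorphic side, the weight-$\kappa$ action of $\glql$ on the fibre of the $\pi$-construction varies in a family over $\mathcal{Z}$ (since it is built from the weight-zero moduli-theoretic action twisted by the $p$-adically interpolated Eisenstein factor). The strategy is to prove the isomorphism $\pi_m(\rho_\kappa,N_\kappa)\cong\pi_{f_\kappa,l}$ first at the classical points, where it is Carayol's theorem, and then spread it out by a rigidity/density argument, the obstruction to spreading being exactly the locus where the local representation-theoretic type can jump.

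First I would stratify $\mathcal{Z}$ according to the behaviour of $N$: the monodromy operator $N_\kappa$ acting on the family is, locally on $Sp(A)$, a nilpotent matrix over $A$, and its rank is upper semicontinuous and generically constant on $\mathcal{Z}$ (since $\mathcal{Z}$ is irreducible). This generic rank is $0$ in the Supercuspidal and Principal Series cases and $1$ in the Special case, which is precisely the trichotomy in the statement; I would \emph{define} the three types this way. In case (i), the inertia action on the generic fibre is through a finite quotient and, by Carayol together with the fact that the inertial type is locally constant in $p$-adic families (the restriction of $\rho$ to $I_{\mathbb{Q}_l}$ factors through a fixed finite quotient on all of $Sp(A)$, as $l\neq p$ forces the image of wild inertia to be finite), $\pi_m(\rho_\kappa,N_\kappa)$ is irreducible supercuspidal of constant type; matching it with $\pi_{f_\kappa,l}$ at the Zariski-dense classical points and using that an irreducible smooth admissible representation is rigid in families forces equality everywhere on $\mathcal{Z}$.

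In case (ii), generically $N_\kappa\neq 0$, so $\pi_m(\rho_\kappa,N_\kappa)$ is a twist of the Steinberg representation and $\pi_{f_\kappa,l}$ agrees with it by the same classical-density argument on the open locus where $N_\kappa$ has rank $1$. At a point where monodromy vanishes, $\rho_\kappa|_{W_{\mathbb{Q}_l}}$ becomes a sum of two unramified-twist characters with ratio $l^{\pm1}$ (the limit of the Steinberg parameter), so $\pi_m(\rho_\kappa,N_\kappa)$ is a reducible principal series containing a unique special (Steinberg-type) irreducible subrepresentation; I would then argue that the family $\pi_{f_\kappa,l}$, being a flat/continuous limit of Steinberg representations, lands inside that unique special constituent, giving the stated inclusion. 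Case (iii) is the mirror image: generically $\rho_\kappa|_{W_{\mathbb{Q}_l}}$ is a sum of two characters whose ratio is not $l^{\pm1}$, so $\pi_m(\rho_\kappa,N_\kappa)$ is irreducible principal series and rigidity plus classical density gives the isomorphism; at the bad points the ratio degenerates to $l^{\pm1}$, the principal series becomes reducible with a finite-dimensional (or special) sub/quotient, and all one can extract from the limiting argument is a $\glql$-equivariant surjection from a representation $\pi$ with the right semisimplification onto $\pi_{f_\kappa,l}$.

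The main obstacle is the spreading-out step at the degenerate points in cases (ii) and (iii): away from the bad locus everything is controlled by Carayol plus the rigidity of irreducible admissible representations in families, but at a point of monodromy vanishing or of ratio $l^{\pm1}$ one must understand how the $p$-adically interpolated weight-$\kappa$ action of $\glql$ degenerates, and in particular why it cannot fall into the ``wrong'' constituent of a reducible $\pi_m(\rho_\kappa,N_\kappa)$. I expect this to require a careful continuity argument for the action of the Hecke operators $U_l$-type and the unipotent elements of $\glql$ on the Banach module interpolating the forms, pinning down which Jordan--Hölder factor survives in the limit; this is where the precise normalisation $\pi_m$ of \S5.3 does its work.
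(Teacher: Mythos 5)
Your trichotomy by generic rank of $N$ and your appeal to Carayol at classical points both match the paper, but the central mechanism of the proof is missing. You write that agreement at the Zariski-dense classical points together with ``rigidity of irreducible admissible representations in families'' forces the isomorphism everywhere, yet there is no such rigidity statement to cite: the smooth representations $\pi_{f_{\kappa},l}$ at non-classical $\kappa$ are produced by the $p$-adic construction and do not a priori form a family whose isomorphism class is locally constant (indeed it is \emph{not} locally constant -- the type jumps at the bad points), so one needs a quantitative invariant that genuinely varies rigid-analytically and can be compared on a dense set. The paper supplies exactly this: it fixes a \textit{bonne} subgroup $U$, uses Bernstein's equivalence between admissible smooth representations of $\glql$ and finite-dimensional $\mathcal{H}_U$-modules (determined up to semisimplification by traces), and constructs two global functions $tr_{aut}, tr_{Lan}:\mathcal{H}_U \rightarrow O(\widetilde{\mathcal{E}^o})$. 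On the automorphic side this rests on showing $\pi_{f,l}^U$ is a finite projective module over each affinoid of a suitable cover and that specialisation $\pi_{f,l}^U\otimes_{\kappa}\Cp \rightarrow \pi_{f_{\kappa},l}^U$ is surjective, and an isomorphism away from finitely many points (Proposition 17, via a Tor computation over a Dedekind domain); on the Galois side it rests on explicit integral models $\Pi_B$ of the Langlands correspondents in each of the three cases (Theorems 22, 24, 26). Classical density plus Carayol then gives $tr_{aut}=tr_{Lan}$ as an identity of rigid functions, and the bonne property converts the resulting $\mathcal{H}_U$-module identifications back into $\glql$-isomorphisms. None of this apparatus appears in your proposal, and without it ``spreading out'' is an assertion, not an argument.

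The degenerate points are also not handled. You defer them to ``a careful continuity argument for the action of Hecke operators and unipotent elements,'' but the paper never argues this way: at a monodromy-vanishing point of a special component, the Galois-side family $\Pi_A=\Delta_A/\Omega_A$ is built so that its fibre is the $U$-invariants of the special subrepresentation, and the combination of the trace identity with the surjection of Proposition 17 onto the nonzero module $\pi_{f_{\kappa},l}^U$ pins $\pi_{f_{\kappa},l}$ down as that special constituent; at a principal-series point with Satake ratio $l^{\pm 1}$ the same ingredients yield only a surjection from a representation with the correct semisimplification, which is precisely why the theorem claims no more there. Your intuition that $\pi_{f_{\kappa},l}$ is a ``limit of Steinbergs landing in the special constituent'' is the right heuristic, but as stated it is not a proof, and your proposed route (continuity of the group action on the Banach modules) is not substantiated and is not how the difficulty is actually resolved.
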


We briefly outline the proof:  Fix a sufficiently small open compact subgroup
$U \vartriangleleft \glzl$ which is \textit{bonne} in the sense of \cite{CB} and such
that $\pi_{f_{\kappa},l}^U$ is non-zero for all $\kappa \in \mathcal{E}^o(\Cp)$. Also
fix a Haar measure and let $\mathcal{H}_U$ denote the Hecke algebra of compactly
supported $\Cp$-valued
$U$ bi-invariant functions on $\glql$. One of the central results of \cite{CB}
is that there
is an equivalence of categories between admissible smooth representations
of $\glql$ and finite dimensional $\mathcal{H}_U$ modules. Because we are
working over $\Cp$ such a module is determined up to semisimplicity by traces.
Let $\widetilde{\mathcal{E}^o}$ be a normalisation of $\mathcal{E}^o$, considered
as rigid spaces over $\Cp$.  Using the automorphic interpretation of $\mathcal{E}$ we construct a
function
$$\xymatrix{ tr_{aut}:\mathcal{H}_U
\ar[r]
&O(\widetilde{\mathcal{E}^o})},$$
which agrees with the trace map of $\mathcal{H}_U$ acting on $\pi_{f_{\kappa},l}^U$
away from a discrete subset.  Similarly, using the Galois interpretation of $\mathcal{E}$ we construct a
function 
$$\xymatrix{ tr_{Lan}:\mathcal{H}_U
\ar[r]
&O(\widetilde{\mathcal{E}^o})},$$
which agrees with the trace map of $\mathcal{H}_U$ acting on $\pi_m(\rho_{\kappa}, N_{\kappa})^U$ away from a discrete set.  
By Coleman's classicality result and classical local to global compatibility
we deduce that both functions must agree on a Zariski dense set.  Hence we
deduce that $tr_{aut} = tr_{Lan}$.  This proves local to global compatibility
on $\mathcal{E}$ away from a discrete set.  The rest of Theorem A is deduced
from a careful study of the finer properties of both these functions.
\\ \\
This paper in the authors PhD thesis under the supervision of Kevin Buzzard,
and it is a pleasure to thank him both for suggesting the problem and for
the numerous helpful conversations.  I would also like to thank Gaetan Cheveier
and 
Matthew Emerton for several very helpful conversations.

\section{The Classical Case}
We  briefly recall how to attach an automorphic representation to a cuspidal eigenform.
\\ \\
Let us fix the following notation:
\\ \\
$\mathbf{A}$: the adeles over $\Q$
\\ \\
$\af$: the finite adeles over $\Q$
\\ \\
$\hat{\mathbb{Z}}:=  \prod_{p}\mathbb{Z}_p \subset \af$
\\ \\ 
$U \subset \glaf$: an open compact subgroup
\\ \\
$U_0(N):= \{g \in Gl_2(\hat{\mathbb{Z}}) | g \equiv \begin{pmatrix}* &* \\
0 &*\end{pmatrix} (N) \}$
\\ \\
$U_1(N):= \{g \in Gl_2(\hat{\mathbb{Z}}) | g \equiv \begin{pmatrix}* &* \\
0 &1\end{pmatrix} (N) \}$
\\ \\
$U(N):= \{g \in Gl_2(\hat{\mathbb{Z}}) | g \equiv \begin{pmatrix}1 &0 \\
0 &1\end{pmatrix} (N) \}$
\\ \\
We can decompose $Gl_2(\mathbf{A})$ with respect to $U$ using the following
famous result of Borel:
\begin{lem}
If $U \subset Gl_2(\af)$ is an open, compact subgroup then $$Gl_2(\mathbf{A})=\coprod_{j=1}^r
Gl_2(\Q)\cdot g_j U\cdot Gl_2^+(\mathbb{R})\;\text{where}\;g_j \in Gl_2(\mathbf{A}),
\; r< \infty$$ If $det(U)=\hat{\mathbb{Z}^*}$
then $r=1$ and $g_1$ can be trivial.\begin{proof}
This is just an application of the strong approximation theorem.
\end{proof}
\end{lem}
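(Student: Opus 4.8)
The plan is to deduce the statement from strong approximation for $SL_2$ over $\Q$, together with the fact that $\Q$ has class number one in the precise form $\mathbf{A}^\times=\Q^\times\cdot(\mathbb{R}^{>0}\times\zh)$, the two factors meeting only in $1$ (a positive rational that is a unit at every prime must be $1$). Everything is organised around the determinant map $\det\colon Gl_2(\mathbf{A})\to\mathbf{A}^\times$, which is surjective with kernel the connected, simply connected group $SL_2$, and which carries $Gl_2(\Q)$, $Gl_2^+(\mathbb{R})$ and $Gl_2(\hat{\mathbb{Z}})$ onto $\Q^\times$, $\mathbb{R}^{>0}$ and $\zh$ respectively. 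Strong approximation applies here precisely because $SL_2$ is simply connected and $SL_2(\mathbb{R})$ is non-compact, so that $SL_2(\Q)$ is dense in $SL_2(\af)$.

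First I would treat the maximal compact case and show $Gl_2(\mathbf{A})=Gl_2(\Q)\cdot Gl_2(\hat{\mathbb{Z}})\cdot Gl_2^+(\mathbb{R})$. Given $g\in Gl_2(\mathbf{A})$, write $\det g=q\cdot(r,u)$ with $q\in\Q^\times$, $r\in\mathbb{R}^{>0}$, $u\in\zh$ and replace $g$ by $\mathrm{diag}(q,1)^{-1}\,g\,\mathrm{diag}(r,1)^{-1}\mathrm{diag}(u,1)^{-1}$; this is legitimate since these correction matrices lie in $Gl_2(\Q)$, $Gl_2^+(\mathbb{R})$ and $Gl_2(\hat{\mathbb{Z}})$ (the latter two commuting, being supported at disjoint places), and it reduces us to $\det g=1$, i.e. $g\in SL_2(\mathbf{A})$. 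Since the coset $g_f\cdot SL_2(\hat{\mathbb{Z}})$, where $g_f\in SL_2(\af)$ is the finite part of $g$, is open in $SL_2(\af)$, density of $SL_2(\Q)$ produces $\gamma\in SL_2(\Q)$ with $\gamma^{-1}g_f\in SL_2(\hat{\mathbb{Z}})$, whence $\gamma^{-1}g\in SL_2(\mathbb{R})\times SL_2(\hat{\mathbb{Z}})$; unwinding the reduction, keeping the archimedean and finite factors consistently apart, gives the claim.

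For a general open compact $U$ the subgroup $V:=U\cap Gl_2(\hat{\mathbb{Z}})$ is open in the compact group $Gl_2(\hat{\mathbb{Z}})$, hence of finite index; writing $Gl_2(\hat{\mathbb{Z}})=\bigcup_{j=1}^{r}g_jV$ with $g_j\in Gl_2(\hat{\mathbb{Z}})\subset Gl_2(\af)$ and substituting into the previous step yields $Gl_2(\mathbf{A})=\bigcup_{j}Gl_2(\Q)\,g_j\,U\,Gl_2^+(\mathbb{R})$; discarding repeated double cosets leaves a finite disjoint union, so $r<\infty$. When moreover $\det U=\zh$ I would show $Gl_2(\hat{\mathbb{Z}})\subseteq Gl_2(\Q)\cdot U$: given $k\in Gl_2(\hat{\mathbb{Z}})$, pick $u\in U$ with $\det u=\det k$, so $ku^{-1}\in SL_2(\af)$, and a second application of strong approximation --- intersecting $(ku^{-1})(U\cap SL_2(\af))$ with $SL_2(\Q)$ --- writes $ku^{-1}\in SL_2(\Q)\cdot U$. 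Combined with the maximal compact case this collapses the decomposition to the single double coset $Gl_2(\Q)\,U\,Gl_2^+(\mathbb{R})$, so $r=1$ and $g_1$ can be taken trivial. The only substantive input is the strong approximation theorem; the rest is bookkeeping with the determinant, the one point needing care (but presenting no real difficulty) being the non-commutativity of $Gl_2$, which forces one to keep the places apart and to check that the diagonal correction matrices can be peeled off on the correct side without disturbing the $Gl_2(\Q)$ / $Gl_2(\hat{\mathbb{Z}})$ / $Gl_2^+(\mathbb{R})$ trichotomy.
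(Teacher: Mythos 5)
Your argument is correct and is exactly the intended one: the paper's proof is the single remark that the lemma is an application of strong approximation, and your write-up simply carries that out (reduction to $SL_2$ via the determinant and $\mathbf{A}^\times=\Q^\times\cdot(\mathbb{R}^{>0}\times\zh)$, density of $SL_2(\Q)$ in $SL_2(\af)$, finiteness from $[Gl_2(\hat{\mathbb{Z}}):U\cap Gl_2(\hat{\mathbb{Z}})]<\infty$, and collapse to one double coset when $det(U)=\zh$). No discrepancies with the paper's approach; the bookkeeping about keeping the archimedean and finite places separate is handled correctly.
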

Let $\Gamma \in GL_2^+(\mathbb{Q})$ be an arithmetic subgroup such that $U
\cap \glpl = \Gamma$.  Let us further assume that $det(U)=\zh$. We make the
following important defintion:
\begin{defin}
For $f\in \mathcal{M}_k(\Gamma)$ a classical modular form of weight $k \in \mathbb{Z}$ and level $\Gamma$
define
$$\varphi_f:GL_2(\mathbb{Q})\backslash GL_2(\mathbf{A})/U\rightarrow \mathbb{C}$$ $$[g = \gamma u \delta] \longrightarrow
f(\delta i)j(\delta , i )^{-k}det(\delta)$$ Where $\gamma \in GL_2(\Q)$,
$u \in U$ and $\delta \in GL_2^+(\mathbb{R})$, as in the decomposition
of lemma $1$
\end{defin}
This is not a canonical procedure.  There are different conventions about
what power of determinant to choose. This choice ensures that classical and
adelic Hecke operators (when correctly normalised) agree. We embed such functions in the ambient space
of weight $k$ modular forms:

$$\mathcal{M}_k:=\{\varphi:Gl_2(\Q)\backslash Gl_2(\mathbf{A}) \rightarrow
\mathbb{C} \; | \; (*) \}$$
Where $(*)$ is a set of conditions:
\begin{enumerate}

\item $\varphi$ must
be left invariant under some open compact subgroup $U \subset Gl_2(\af)$.
\item $\forall g \in Gl_2(\af)$ we have a map $f_g^{\varphi}:\mathcal{H} \rightarrow
\mathcal{C}$ given by:   $$f_g^{\varphi}(\delta i) =
\varphi(g\delta)j(\delta,i)^{k}det(\delta)^{-1}$$ where $\delta \in Gl_2^+(\mathbb{R})$.
This must be well defined and holomorphic.
\item $\varphi $ is slowly increasing, see \cite{DT}.
\end{enumerate}
$\mathcal{M}_k \subset \mathcal{A}$, where $\mathcal{A}$ is the space of automporphic
forms for $GL_{2/{\mathbb{Q}}}$.  It naturally comes equipped with a smooth
admissible action $\glaf$. There is a very direct relation between this space
and classical modular forms. If $\varphi \in \mathcal{M}_k^U$
then $f_g^{\varphi} \in \mathcal{M}_k(\Gamma_g)$
where $\Gamma_g = gUg^{-1}\cap \glpl$.  Condition $(ii)$ assures us that
it satisfies the automorphic invariance property with respect to $\Gamma_g$,
and condition $(iii)$ assures us that it is well behaved at cusps. If 
$$\glaf = \coprod_{j=1}^{r}GL_2(\Q)\cdot g_j \cdot U 
\text{ where } g_j \in \glaf, \; r< \infty$$
and $\Gamma_j =
g_j U g_j^{-1} \cap GL_2^+(\Q)$  then there is an isomorphism of complex
vector spaces $$\mathcal{M}_k^U \rightarrow
\bigoplus_{j=1}^{r}\mathcal{M}_k(\Gamma_j)$$
$$\varphi \rightarrow (f_{g_j}^{\varphi}).$$
In this way $\mathcal{M}_k^{U_1(N)} \cong \mathcal{M}_k(\Gamma_1(N))$.  If
we include a nebentypus character $\chi$ then there is an isomorphism.
$\mathcal{M}_k^{U_1(N)}(\chi) \cong \mathcal{M}_k(\Gamma_1(N), \chi^{-1})$
Our choice of
conventions ensures that with is an isomorphism of Hecke modules away from
$N$.

\begin{defin}
For $\varphi \in \mathcal{M}_k$ and $ g \in \glaf$ we define the $g$-$q$-expansion
of $\varphi$ as being the $q$-expansion at $\infty$ of $f_g^{\varphi}$.
\end{defin}
From the above it is clear that any  $\varphi \in \mathcal{M}_k$ is uniquely determined by a finite number of these $q$-expansions.

The space of cusp forms can also be expressed in this adelic language and
we obtain the space $\mathcal{S}_k \subset \mathcal{M}_k$ by demanding a
\textit{cuspidal} condition, see \cite{DT}.  This property means that  $f_g^{\varphi} \in S_k(\Gamma_g)$.   $\mathcal{S}_k \subset \mathcal{A}^o$, the space
of cuspidal automorphic forms.  This latter space decomposes discretely into
irreducible automorphic representations of $\gla$ each with multiplicity
one.  In the usual way, this is really a $(\mathfrak{gl_2}, O_2(\mathbb{R}))$-
module at infinity
but we suppress this from the notation.  For $k \geq 2$ the span of $\mathcal{S}_k$
within $\mathcal{A}^o$ is precisely the direct sum of the automorphic representations
whose infinite component is $\mathcal{D}_k$, the  holomorphic weight $k$ discrete series (with some fixed central character that we will suppress
from the notation).  In this way, $\mathcal{S}_k$
decomposes as the  direct sum of the irreducible admissible representations
of $\glaf$ appearing in $\mathcal{A}^o$ whose infinity component is $\mathcal{D}_k$ .  If  $f \in S_k(\Gamma_1(N),
\chi )$ is a newform then $\varphi_f$ lies in a unique such irreducible constituent.
This sets us a bijection between newforms and irreducible subrepresentations
of $\mathcal{S}_k$.  More generally, any eigenform $f$ lies in a unique irreducible
sub-representation.
\begin{defin}
If $f \in S_k(\Gamma_1(N),
\chi )$ is an eigenform we define $$\pi_f = \mathbb{C}[\glaf]\varphi_f \subset
\mathcal{S}_k$$
as the irreducible smooth automorphic representation attached to $f$.
\end{defin}
By the general theory of such representations we know that it decomposes
as the restricted tensor product of local representations:
$$\pi_f =\grave{\bigotimes_l} \pi_{f,l},$$
where the product is over all rational primes and the local factors are smooth irreducible representations of $\glql$ in the sense of \cite{JL}.  Clearly
$\pi_{f,l} \cong \mathbb{C}[\glql]\varphi_f$.  
\\ \\
The theory overconvergent
modular forms is built on the $p$-adic properties of $q$-expansions.  If
$f$ is an overconvergent eigenform in the sense of \cite{CME} then to attach
a representation of $\glql$ we must make sense of this action at the level
of $q$-expansions. 
\begin{prop}
Let $U\in \glaf$
an open compact subgroups, such that  $det(U)=\zh$.
Let $g, h \in \glaf$ with $hg=\gamma \cdot u$ where $\gamma \in Gl_2^+(\Q)$ and
$u\in U$.  For any $\varphi \in \mathcal{M}_k^U$  the $h$-$q$-expansion at of $g(\varphi)$ is the
$q$-expansion at infinity of $det\gamma^{k-2}\cdot f_{1}^{\varphi}|_k\gamma^{-1}$.

\begin{proof} 
The conventions we choose are that  $f|_k\gamma(\tau):=(det\gamma)^{k-1}\cdot
j(\gamma, \tau)^{-k}\cdot f(\gamma \tau).$  By definition $f^{g(\varphi)}_h (\delta i) =
\varphi(h\delta g)j(\delta,i)^{k}det(\delta)^{-1} =
\varphi(\gamma\delta)j(\delta,i)^{k}det(\delta)^{-1}.$ $\varphi$ is left
invariant by $GL_2(\mathbb{Q})$, so this simplifies to $\varphi(\gamma^{-1}\delta)j(\delta,i)^{k}det(\delta)^{-1}$.
By the elementary properties of the $j$ function this simplifies to the expression
we are looking for.
\end{proof}

\end{prop}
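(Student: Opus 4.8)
The plan is to compute everything explicitly at the level of functions on the upper half plane, reduce to the single $q$-expansion $f_1^{\varphi}$, and read off the normalising factor at the end from the slash convention. First I would unwind the definition of the $h$-$q$-expansion: it is the $q$-expansion at $\infty$ of $f_h^{g(\varphi)}$, and for $\delta \in GL_2^+(\mathbb{R})$ one has $f_h^{g(\varphi)}(\delta i) = \bigl(g(\varphi)\bigr)(h\delta)\, j(\delta,i)^{k}\det(\delta)^{-1} = \varphi(h\delta g)\, j(\delta,i)^{k}\det(\delta)^{-1}$, using that $\glaf$ acts by right translation. The key manipulation is then to rewrite the adele $h\delta g$: since $\delta$ sits at the archimedean place while $h,g$ sit at the finite places they commute, so $h\delta g = (hg)\delta = (\gamma u)\delta$; here I must keep careful track of the fact that the $\gamma$ occurring in $hg = \gamma u \in \glaf$ is the \emph{finite-adelic} image of $\gamma \in GL_2^+(\mathbb{Q})$, which differs from the diagonal image by the inverse of the archimedean component of $\gamma$. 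Writing $\gamma$ now for the diagonal copy, this yields the identity $h\delta g = \gamma\cdot(\gamma^{-1}\delta)\cdot u$, where $\gamma^{-1}\delta \in GL_2^+(\mathbb{R})$ genuinely and $u \in U$.

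Next I would feed this into $\varphi$: left $GL_2(\mathbb{Q})$-invariance kills the leading $\gamma$, and the $U$-invariance built into $\mathcal{M}_k^U$ kills the trailing $u$, so that $f_h^{g(\varphi)}(\delta i) = \varphi(\gamma^{-1}\delta)\, j(\delta,i)^{k}\det(\delta)^{-1}$. Comparing with the definition of $f_1^{\varphi}$ evaluated at the point $\gamma^{-1}(\delta i)$ expresses $\varphi(\gamma^{-1}\delta)$ in terms of $f_1^{\varphi}(\gamma^{-1}\tau)$, with $\tau := \delta i$, up to explicit factors of $j$ and $\det$; substituting and applying the cocycle relation $j(\gamma^{-1}\delta,i) = j(\gamma^{-1},\tau)\,j(\delta,i)$ together with multiplicativity of the determinant collapses the expression to $f_h^{g(\varphi)}(\tau) = \det(\gamma)^{-1}\, j(\gamma^{-1},\tau)^{-k}\, f_1^{\varphi}(\gamma^{-1}\tau)$. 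Finally, using the convention $f|_k\alpha(\tau) = (\det\alpha)^{k-1} j(\alpha,\tau)^{-k} f(\alpha\tau)$ with $\alpha = \gamma^{-1}$, the right-hand side equals $(\det\gamma)^{-1}(\det\gamma)^{k-1}\,(f_1^{\varphi}|_k\gamma^{-1})(\tau) = \det(\gamma)^{k-2}\,(f_1^{\varphi}|_k\gamma^{-1})(\tau)$, and passing to $q$-expansions at $\infty$ on both sides finishes the proof.

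The whole argument is formal; the only delicate point is the bookkeeping in the first step — separating the diagonal and finite-adelic embeddings of $\gamma$ and checking that the two invariance properties of $\varphi$ are applied on the correct sides. This is precisely where the exponent $k-2$ is produced rather than $k-1$ or $k$, which is the content of the proposition and the reason the classical and adelic Hecke normalisations match.
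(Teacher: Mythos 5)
Your proof is correct and takes essentially the same route as the paper: unwind the definition of $f_h^{g(\varphi)}$, commute the archimedean and finite components to write $h\delta g = \gamma\,(\gamma^{-1}\delta)\,u$ with $\gamma$ the diagonal copy, apply left $GL_2(\mathbb{Q})$-invariance and $U$-invariance, and convert via the slash convention. You simply make explicit the cocycle relation and determinant bookkeeping that the paper compresses into ``elementary properties of the $j$ function'', arriving at the same factor $\det(\gamma)^{k-2}$.
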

Ultimately we will be interested on the effect of this action on the $q$-expansions
of Eisenstein series as it is these which $p$-adically interpolate. We want
therefore to be able to explicitly determine the effect of this action on $q$-expansion of classical forms of level $U_1(p)$ and character $\chi$
under the action of the Diamond operators at $p$.  From now on fix a rational
prime $l \neq p$.  

\begin{lem}
Let $g \in \glql \in \glaf$.  Then $$ g = \begin{pmatrix}l^m
&\frac{a}{l^r} \\ 0 &l^n \end{pmatrix}\cdot u , \text{ with } a, r, m, n \in
\mathbb{Z}, u \in U_0(p)$$
Where the first term is embedded diagonally in $\glaf$.
\end{lem}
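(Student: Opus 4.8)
Since the matrix $\begin{pmatrix} l^m & a/l^r \\ 0 & l^n\end{pmatrix}$ is to be embedded diagonally, writing $\gamma$ for it the assertion is equivalent to finding $m,n,r,a \in \mathbb{Z}$ with $u := \gamma^{-1}g \in U_0(p)$, and I would verify the defining conditions of $U_0(p)$ one place at a time. Under the embedding $\glql \hookrightarrow \glaf$ the element $g$ is the identity at every finite place $v \neq l$, so there $u_v = \gamma^{-1} = \begin{pmatrix} l^{-m} & -al^{-(m+n+r)} \\ 0 & l^{-n}\end{pmatrix}$; its entries lie in $\mathbb{Z}_v$ and its determinant $l^{-(m+n)}$ is a $v$-adic unit --- this is exactly where the hypothesis $l \neq p$ enters, at $v = p$ --- and it is upper triangular, hence trivially upper triangular modulo $p$. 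Thus the only genuine condition is at $v = l$, namely $u_l = \gamma^{-1}g \in \glzl$, i.e. $g \in \gamma\,\glzl$.

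To exhibit such a $\gamma$ I would invoke the Iwasawa decomposition, writing $g = bk$ with $b = \begin{pmatrix} \alpha & \beta \\ 0 & \delta\end{pmatrix}$ upper triangular ($\alpha,\delta \in \mathbb{Q}_l^\times$, $\beta \in \mathbb{Q}_l$) and $k \in \glzl$. Put $m = v_l(\alpha)$ and $n = v_l(\delta)$, so that $\alpha l^{-m}$ and $\delta l^{-n}$ lie in $\mathbb{Z}_l^\times$. A short computation shows that for any $c \in \mathbb{Q}_l$ the matrix $\begin{pmatrix} l^m & c \\ 0 & l^n\end{pmatrix}^{-1}\begin{pmatrix} \alpha & \beta \\ 0 & \delta\end{pmatrix}$ is upper triangular with unit diagonal entries $\alpha l^{-m}$ and $\delta l^{-n}$ and upper-right entry $l^{-m}\bigl(\beta - c\,\delta l^{-n}\bigr)$, which lies in $\mathbb{Z}_l$ precisely when $c \equiv \beta\,(\delta l^{-n})^{-1} \pmod{l^{m}\mathbb{Z}_l}$.

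The remaining point is to choose $c$ of the prescribed shape $a/l^r$. This is possible because $\mathbb{Z}[1/l]$ is dense in $\mathbb{Q}_l$; equivalently the reduction map $\mathbb{Z}[1/l] \to \mathbb{Q}_l/l^{m}\mathbb{Z}_l$ is surjective, so one may pick $a,r \in \mathbb{Z}$ with $a/l^r$ in the required residue class. Setting $\gamma = \begin{pmatrix} l^m & a/l^r \\ 0 & l^n\end{pmatrix}$ then yields $\gamma^{-1}\begin{pmatrix} \alpha & \beta \\ 0 & \delta\end{pmatrix} \in \glzl$, hence $u_l = \gamma^{-1}g = \bigl(\gamma^{-1}b\bigr)k \in \glzl$, and combined with the place-by-place analysis of the first paragraph this gives $u = \gamma^{-1}g \in U_0(p)$ and $g = \gamma u$, as required.

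I do not anticipate a real obstacle: the mathematical content is just the Iwasawa decomposition over $\mathbb{Q}_l$ together with the elementary surjectivity of $\mathbb{Z}[1/l] \to \mathbb{Q}_l/\mathbb{Z}_l$. The only step demanding a little care is the adelic bookkeeping --- keeping track of the fact that $g$ lives only at the place $l$ while $\gamma$ is spread diagonally over all finite places, and using $l \neq p$ to guarantee that $\gamma^{-1}$ is integral (indeed upper triangular) at $p$, which is what makes $u$ land in $U_0(p)$ rather than merely in $\glzh$.
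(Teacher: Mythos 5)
Your proposal is correct and follows essentially the same route as the paper: an Iwasawa decomposition of $g$ at the place $l$, followed by an elementary adjustment of the upper-triangular factor so that its diagonal entries are powers of $l$ and its upper-right entry lies in $\mathbb{Z}[1/l]$ (the paper achieves this last step by clearing a prime-to-$l$ denominator with a unipotent matrix and a Euclid-type congruence, which is just an explicit form of your density argument for $\mathbb{Z}[1/l]\to\mathbb{Q}_l/l^m\mathbb{Z}_l$). Your explicit place-by-place verification that $u=\gamma^{-1}g$ lands in $U_0(p)$ --- in particular that $l\neq p$ makes $\gamma^{-1}$ integral and upper triangular at $p$ --- is left implicit in the paper, so that part is a welcome clarification rather than a divergence.
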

\begin{proof}
By the Iwasawa decomposition  $$\glql
= B(\Q)\glzl $$
where $B(\Q)$ is the Borel subgroup of upper triangular matrices.  Therefore
$$g=\begin{pmatrix}A
&B \\ 0 &C \end{pmatrix}\cdot v \text{ where }A, B, C \in \Q \text{ and } v \in \glzl. $$
Post-multiplying the first term by an an appropriate
diagonal matrix with coefficients in $\Q$ contained in $\glzl$ we can get
it into the form $$\begin{pmatrix}l^m
&\frac{a}{l^rN} \\ 0 &l^n \end{pmatrix} v \text{ where } m, n, r, N ,a \in \mathbb{Z},
r \geq 0,(N,l)=1
$$
We want $N=1$. Assume this is not the case and consider the following:
$$\begin{pmatrix}l^m
&\frac{a}{l^rN} \\ 0 &l^n \end{pmatrix}\cdot \begin{pmatrix}1
&\frac{c}{N} \\ 0 &1 \end{pmatrix}=\begin{pmatrix}l^m
&\frac{a}{l^rN}+\frac{l^mc}{N} \\ 0 &l^n \end{pmatrix}, c\in \mathbb{Z}$$
Note that the second factor is in $\glzl$ for any such choice of $c$, and
we may assume $r+m \geq0$. We are reduced to finding $c \in \mathbb{Z}$ such that $N | a + c l^{r+m}$. But $l$ and $N$ are coprime so  Euclid's algorithm assures us that there is such
a $c$.  Hence we are done.
\end{proof}

\begin{prop}
Let  $\varphi \in \mathcal{M}_k^{U_1(p)} (\chi)$, $g \in \glql$, $h \in U_0(p)$,
$h_l$ its component at $l$, and $h^l$ the trivialisation of $h$ at $l$. Let $$ h_l g = \begin{pmatrix}l^m
&\frac{a}{l^r} \\ 0 &l^n \end{pmatrix}\cdot u , \text{ with } a, r, m, n \in
\mathbb{Z}, u \in U_0(p)$$
If
 the $1$-$q$-expansion at infinity
of $\varphi$ is $f_1^{\varphi}(q)$ then the $h$-$q$-expansion at infinity of $g(\varphi)$ 
is 
$$f^{g(\varphi)}_h(q)=\chi( u h^l) l^{-(m+n)+nk}f_1^{\varphi}(q^{l^{n-m}}\cdot
e^{2\pi i (\frac{-a}{l^{m+r}})})$$

\begin{proof}
Let $\begin{pmatrix}l^m
&\frac{a}{l^r} \\ 0 &l^n \end{pmatrix} = \gamma$. By definition $$f^{g(\varphi)}_h (\delta i) =
\varphi(h\delta g)j(\delta,i)^{k}det(\delta)^{-1} = \varphi(h_lg\delta h^l)j(\delta,i)^{k}det(\delta)^{-1}$$
This final term is equal to $$\varphi(\gamma^{-1}\delta u h^l)j(\delta,i)^{k}det(\delta)^{-1}=\chi(u
h^l)\varphi(\gamma^{-1}\delta )j(\delta,i)^{k}det(\delta)^{-1}.$$
The proof of proposition 2 tells us that this term is equal to
$$\chi(u
h^l)det\gamma^{k-2}\cdot f_{1}^{\varphi}|_k\gamma^{-1}$$
$$f_1^{\varphi}|_k\begin{pmatrix}l^m
&\frac{a}{l^r} \\ 0 &l^n \end{pmatrix}^{-1} =f_1^{\varphi}|_k\begin{pmatrix}l^{-m}
&\frac{-a}{l^{r+m+n}} \\ 0 &l^{-n} \end{pmatrix}. $$  If $\tau$ is the parameter
on the upper half plane then we know that
$$ f_1^{\varphi}|_k\begin{pmatrix}l^{-m}
&\frac{-a}{l^{r+m+n}} \\ 0 &l^{-n} \end{pmatrix}(\tau)=l^{-(k-1)(m+n)}\cdot
l^{nk}\cdot f_1^{\varphi}(\frac{l^{-m}\tau +\frac{-a}{l^{r+m+n}}}{l^{-n}}) $$ Recalling
that $q=e^{2\pi i \tau}$ we know that $$ f_1^{\varphi}|_k\begin{pmatrix}l^{-m}
&\frac{-a}{l^{r+m+n}} \\ 0 &l^{-n} \end{pmatrix}(q)=l^{-(k-1)(m+n)+nk}f_1^{\varphi}(q^{l^{n-m}}\cdot
e^{2\pi i\frac{-a}{l^{m+r}}})$$  
Multiplying through by the constant term in proposition 2 yields the result.
\end{proof}
\end{prop}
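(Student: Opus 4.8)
The plan is to deduce this from Proposition 2 by carefully tracking the places of $\mathbf{A}$ and then performing one explicit weight-$k$ slash computation. First I would unwind the definition: for $\delta \in GL_2^+(\mathbb{R})$ we have $f_h^{g(\varphi)}(\delta i) = \varphi(h\delta g)j(\delta,i)^k\det(\delta)^{-1}$. Writing $h = h_l\cdot h^l$ as the product of its component at $l$ and its trivialisation away from $l$, and noting that $g\in\glql$ only alters the $l$-component while $\delta$ sits at the archimedean place, the adelic element $h\delta g$ has $l$-component $h_lg$, archimedean component $\delta$, and is otherwise $h^l$; hence $\varphi(h\delta g)=\varphi(h_l g\,\delta\,h^l)$.

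Next I would use the preceding lemma applied to $h_lg$ to write $h_lg=\gamma u$ with $\gamma=\begin{pmatrix} l^m & a/l^r \\ 0 & l^n\end{pmatrix}$ embedded diagonally in $\glaf$ and $u\in U_0(p)$; since $\det\gamma=l^{m+n}>0$ we have $\gamma\in GL_2^+(\Q)$, so Proposition 2 is applicable. Because $u$ is a finite idele it commutes with the archimedean $\delta$, so $\varphi(h_lg\,\delta\,h^l)=\varphi(\gamma u\delta h^l)=\varphi(\gamma\delta\,u h^l)$; and since $uh^l\in U_0(p)$ while $\varphi\in\mathcal{M}_k^{U_1(p)}(\chi)$ transforms under $U_0(p)/U_1(p)$ via the nebentypus, right translation by $uh^l$ scales $\varphi$ by $\chi(uh^l)$. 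This extracts the constant $\chi(uh^l)$, and what remains, $\varphi(\gamma\delta)j(\delta,i)^k\det(\delta)^{-1}$, is precisely the quantity that the proof of Proposition 2 identifies with the $q$-expansion at infinity of $\det(\gamma)^{k-2}\cdot f_1^\varphi|_k\gamma^{-1}$.

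It then remains to compute $f_1^\varphi|_k\gamma^{-1}$. With the convention $f|_k\beta(\tau)=(\det\beta)^{k-1}j(\beta,\tau)^{-k}f(\beta\tau)$ and $\gamma^{-1}=\begin{pmatrix} l^{-m} & -a/l^{r+m+n} \\ 0 & l^{-n}\end{pmatrix}$, one gets $f_1^\varphi|_k\gamma^{-1}(\tau)=l^{-(k-1)(m+n)}l^{nk}f_1^\varphi(l^{n-m}\tau - a/l^{m+r})$, so in the variable $q=e^{2\pi i\tau}$ this is $l^{-(k-1)(m+n)+nk}f_1^\varphi(q^{l^{n-m}}e^{2\pi i(-a/l^{m+r})})$. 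Multiplying by the factor $\det(\gamma)^{k-2}=l^{(k-2)(m+n)}$ coming from Proposition 2 and by $\chi(uh^l)$, the exponents of $l$ add to $(k-2)(m+n)-(k-1)(m+n)+nk=-(m+n)+nk$, which is exactly the asserted formula.

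The main obstacle here is purely bookkeeping: one has to keep track of which factors live at which place so that the manipulation of $h\delta g$ is legitimate, remember that $\gamma$ is embedded diagonally so that its finite part is absorbed into the relation $h_lg=\gamma u$ while its archimedean part is what drives the $|_k$-action, and verify that $uh^l$ genuinely lies in $U_0(p)$ so that evaluating $\chi$ on it makes sense. Beyond that the argument is a routine unwinding of definitions together with the explicit $2\times2$ computation above, so I do not expect any conceptual difficulty.
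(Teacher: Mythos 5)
Your proposal is correct and follows essentially the same route as the paper's own proof: unwind the definition place by place to get $\varphi(h_l g\,\delta\,h^l)$, apply the preceding Iwasawa-type lemma to write $h_l g=\gamma u$, extract $\chi(uh^l)$ from the nebentypus action of $U_0(p)$, invoke the computation in the proof of Proposition 2 to reduce to $\det(\gamma)^{k-2}f_1^{\varphi}|_k\gamma^{-1}$, and finish with the explicit slash computation, whose exponent bookkeeping $(k-2)(m+n)-(k-1)(m+n)+nk=-(m+n)+nk$ matches the paper exactly. The only difference is presentational: you keep the left $GL_2(\Q)$-invariance step (replacing the finite diagonal $\gamma$ by $\gamma^{-1}$ at the archimedean place) inside the citation of Proposition 2's proof rather than writing it out, which is harmless.
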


We wish to reinterpret these results geometrically. 
Let us fix the following notation:
\\ \\
$\mathcal{U}:=\{U\subset \glaf | \text{ open, compact subgroup  s.t. } U\cap\glpl
\text{ is torsion free }\}$
\\ \\
$\Sigma:=\{U \in\mathcal{U} | U \subset \glzh\}$
\\ \\
For
$U\in\mathcal{U}$ we define the complex Shimura variety:
$$ Y_U=\su$$
Here $Gl_2(\Q)$ acts on $\mathbb{C}\backslash\mathbb{R}$ by Mobius transformations and
$\glaf$ by left translation.  $U$ acts trivially on $\mathbb{C}\backslash\mathbb{R}$
and by right translation on $\glaf$. If 
$$\glaf = \coprod_{j=1}^{r}Gl_2(\Q)\cdot g_j U 
\text{ where } g_j \in \glaf$$
and
$\Gamma_j =
g_j U g_j^{-1} \cap Gl_2^+(\Q)$.
Then there is a homeomorphism of topological spaces 
$$\coprod_{j=1}^{r}\Gamma_j\backslash\mathcal{H}\longrightarrow Y_U$$
$$\Gamma_j\backslash\mathcal{H}\longrightarrow Y_U$$
$$[x] \longrightarrow [x, g_j]$$
Hence $Y_U$ inherits the structure of a riemann surface. Ordering $\mathcal{U}$ by inclusion gives the projective system
$$\varprojlim_{\mathcal{U}}Y_U$$
$\glaf$ naturally acts on this projective system.  For $U \in \Sigma$, $Y_U$
is naturally the complex points of a non-singular algebraic  curve $Y(U)$ over $\mathbb{C}$,
with a (canonical, after fixing a Shimura datum) model over $\mathbb{Q}$, which
is the moduli space of elliptic curve with level $U$ structure, see \cite{DR}.
The push forward of the differential sheaf on the universal elliptic curve gives
an invertible sheaf $\omega_U$. $Y(U)$ has a canonical compactification $X(U)$
and this sheaf uniquely extends to a sheaf  on it which we also denote $\omega_U$.
This naturally
gives rise to the direct limit 
$$\varinjlim_{\Sigma}\Gamma(X(U), \;\omega_U^{\otimes k})$$ for $k \in
\mathbb{Z}$, which inherits an action of $\glaf$.  There is a natural identification $$ \mathcal{M}^U_k \cong \Gamma(X(U), \omega_U^{\otimes k}).$$ This gives
rise to an isomorphism of complex vector spaces
$$ \mathcal{M}_k \cong \varinjlim_{\Sigma}\Gamma(X(U), \;\omega_U^{\otimes k}) \subset \varinjlim_{\Sigma}\Gamma(Y(U), \;\omega_U^{\otimes k})$$
\begin{prop}
This is  an isomorphism of $\glaf$ modules 
\end{prop}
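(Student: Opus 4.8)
\noindent The plan is to reduce $\glaf$-equivariance of the identification $\Phi\colon\mathcal{M}_k\xrightarrow{\sim}\varinjlim_{\Sigma}\Gamma(X(U),\omega_U^{\otimes k})$ to the $q$-expansion computations already in hand. Since $\Phi$ is assembled from the level-$U$ identifications $\mathcal{M}_k^U\cong\Gamma(X(U),\omega_U^{\otimes k})$, which are compatible with the transition maps of the tower, it suffices to fix $g\in\glaf$ and $\varphi\in\mathcal{M}_k^U$ for $U\in\Sigma$ sufficiently small and to prove $\Phi(g\varphi)=g\cdot\Phi(\varphi)$ in the direct limit. Here $g$ acts on the right-hand side through the isomorphism of curves induced by right translation on $\varprojlim_{\mathcal{U}}Y_U$, together with its canonical lift to $\omega$ --- the latter being part of the $\glaf$-equivariant structure on the Hodge bundle over the tower (see \cite{DR}).

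\noindent The key observation is that, for both actions, $g$ moves $q$-expansions around in the \emph{same} way. On the automorphic side a form is determined by its $q$-expansions $(f^\varphi_h(q))_{h\in\glaf}$ (the remark after Definition 3), and because $g\in\glaf$ commutes with the archimedean variable one has $f^{g\varphi}_h=f^\varphi_{hg}$; thus right translation by $g$ simply relabels these $q$-expansions, and Propositions 2 and 3 record the compatibilities between the labels that make the relabelling explicit, namely $f^\varphi_{hg}=\det(\gamma)^{k-2}f^\varphi_1|_k\gamma^{-1}$ when $hg=\gamma u$. On the geometric side the cusps of $\varprojlim_{\Sigma}X(U)$ carry a right $\glaf$-action matching the one on the labels above, and by the $q$-expansion principle of \cite{DR} a compatible system of sections of $\omega^{\otimes k}$ is pinned down by its Tate-curve $q$-expansions at these cusps. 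Identifying the Tate parameter with $q=e^{2\pi i\tau}$ and the canonical differential $dt/t$ with $2\pi i\,dz$, and unwinding Definition 2, I would then check (i) that $\Phi$ carries the $h$-$q$-expansion of $\varphi$ to the $q$-expansion of $\Phi(\varphi)$ at the cusp labelled by $h$, and (ii) that the canonical lift of the Hecke isomorphism to $\omega$ induces on $q$-expansions precisely the scalar $\det(\gamma)^{k-2}$ and slash operator $|_k\gamma^{-1}$ of Proposition 2. It then follows that $\Phi(g\varphi)$ and $g\cdot\Phi(\varphi)$ have the same $q$-expansion at every cusp, hence coincide.

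\noindent The only substantive point is verification (ii): that the canonical lift of the $\glaf$-action to the algebraic Hodge bundle, read off on $q$-expansions at the cusps, produces exactly the twist of Proposition 2. This is precisely the normalisation that the determinant factor in Definition 2 was designed to produce (so that classical and adelic Hecke operators agree), so the work is really in checking that the two conventions line up; I would do this by pulling the universal elliptic curve with its level structure and canonical differential back to the Tate curve over $\mathbb{C}((q))$ and comparing directly with Definition 2, a computation in the spirit of the proof of Proposition 3. An alternative that avoids cusps altogether is to argue analytically: the uniformisation $Y_U=\su$ exhibits $\omega_U^{\otimes k}$ over $\mathbb{C}$ as the weight-$k$ automorphic line bundle, on which the $\glaf$-action inherited from the tower is manifestly right translation; restricting to sections regular at the cusps then identifies $\Gamma(X(U),\omega_U^{\otimes k})$ with $\mathcal{M}_k^U$ compatibly with right translation, and GAGA --- the maps in play being isomorphisms of $\mathbb{C}$-curves --- returns the statement to the algebraic, hence rigid-analytic, category. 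The same convention-matching reappears there as the identification of the algebraic $\omega$ with the weight-$k$ bundle, consistently with Definition 2.
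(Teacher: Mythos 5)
Your strategy---rigidity via $q$-expansions at the cusps---is a legitimate route and is genuinely different from the paper's, which never touches the cusps at all: the paper works complex-analytically on the open curves, decomposes $\glaf = \coprod_j GL_2(\Q)g_jU$, observes that $\{g_jg^{-1}\}$ is then a system of representatives for $gUg^{-1}$ with the \emph{same} groups $\Gamma_j = g_jUg_j^{-1}\cap\glpl$, so that in the uniformisation $\coprod_j\Gamma_j\backslash\mathcal{H}$ the map induced by $g$ is the identity on each component and the induced action on the line bundle is trivial; equivariance then falls out of the defining formula $f^{\varphi}_{g_j}(hi)=\varphi(g_jh)j(h,i)^{-k}\det(h)^{-1}$, with no Tate-curve computation needed.

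As written, however, your proposal has a genuine gap: the two verifications you label (i) and (ii) --- above all (ii), that the canonical lift of the geometric $g$-action to $\omega^{\otimes k}$, read off on Tate-curve $q$-expansions, reproduces exactly the factor $\det(\gamma)^{k-2}$ and the slash $|_k\gamma^{-1}$ of Proposition 2 --- are precisely the content of the proposition, and you only say you ``would check'' them. Similarly, in (i) the assertion that the cusps of the tower carry a right $\glaf$-action ``matching the one on the labels'' presupposes the compatibility to be proved: what actually has to be computed is how $g$, acting through the moduli problem (a prime-to-$p$, here prime-to-nothing, isogeny together with a change of level structure), moves a given cusp and transforms the Tate parameter $q$ and the differential $dt/t$ --- a Katz/Deligne--Rapoport-style computation parallel to Proposition 4 but carried out on the geometric side, which nowhere appears in your text. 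Your ``alternative'' analytic sketch is in substance the paper's own argument, but the claim that the inherited action is ``manifestly right translation'' is exactly the point the paper establishes by the double-coset bookkeeping ($g_j\mapsto g_jg^{-1}$, $\Gamma_j'=\Gamma_j$), so it cannot simply be asserted. In short, the plan is workable, but the single computation that constitutes the proof is deferred rather than done.
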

\begin{proof}This is very well known, but for want for a reference we prove
it.  It is sufficient to prove that the inclusion of $\mathcal{M}_k$ in $\varinjlim_{\Sigma}\Gamma(Y(U), \;\omega_U^{\otimes k})$ respects the $\glaf$ action.  We can work entirely
in the complex analytic category and consider global section of $\omega_U^{\otimes k}$ as being global sections of a line bundle $\mathcal{L}_U^k$.  We can take the direct limit as being over all of $\mathcal{U}$.

We are able to decompose $\glaf$ with respect to $U$,
getting double coset representatives $\{g_j\}$.  Using these, we are deduce
that $\{g_jg^{-1}\}$ are a set of double coset representatives with respect
to $gUg^{-1}$. Notice that
$$\Gamma_j' = \glpl \cap g_jg^{-1}(gUg^{-1})gg_j^{-1} = \Gamma_j$$
We have the isomorphism of riemann surfaces
$$
\xymatrix{
\coprod_{j=1}^r\Gamma_j\backslash\mathcal{H} \ar[r] &Y_{gUg^{-1}}\\
[x_j]\ar[r]&[x_j, \;g_jg^{-1}]
}$$
On connected components we get the maps
$$
\xymatrix{
(Y_{gUg^{-1}})_j \ar[r]^g &(Y_U)_j \ar[d] & &[x_j,\;g_jg^{-1}] \ar[r]^g &[x_j,\;g_j]\ar[d]\\
\Gamma_j\backslash\mathcal{H} \ar[u] &\Gamma_j\backslash\mathcal{H} & &[x_j]
\ar[u] &[x_j]
}$$
From this point of view the map induced by $g$ is trivial and the action
on bundles is trivial.  Hence for  $\varphi \in \mathbf{M}_k^U$
the pullback of  $f^{\varphi}_{g_j} \in M_k(\Gamma_j)$ by $g$ must be left
the same and we deduce that

$$
f^{\varphi}_{g_j}(hi)=(g^*\varphi)(g_jg^{-1}h)j(h,i)^{-k}deth^{-1} = \varphi(g_jh)j(h,i)^{-k}deth^{-1}
$$
for any $h \in Gl_2^+(\mathbb{R})$.  Clearly $g(\varphi)$ satisfies this condition and
we deduce that $g^*\varphi = g(\varphi)$. 
\end{proof}

\section{The $p$-adic Case}
\subsection{Overconvergent Modular Forms of Integer Weight} 
Fix once and for all an isomorphism $\mathbb{C} \cong \mathbb{C}_p$. Let
$\mathbf{A}^p_f$ be the finite adeles away from $p$.  For $U\in \Sigma$ let
$U^p$ denote its image in $\glafp$.  For $n \in \mathbb{N}$, define $\Gamma_1(p^n),\Gamma_0(p^n)
 \subset \glzp$ as the matrices which reduce to the mirabolic and borel respectively
 mod $p^n$.  Let us modify the set $\Sigma$
such that
\\ \\
$\Sigma:=\{ U \subset \glzh | \text{ open, compact subgroup  s.t. } U^p \times \glzp \cap \glpl\text{ is torsion free }\}$
\\ \\
As in the previous section, for any $n \in \mathbb{N}$ and $k \in \mathbb{Z}$,

$$ \varinjlim_{U \in \Sigma}\Gamma(X(\upi), \;\omega_{\upi}^{\otimes k})$$
comes with a left action of $\glafp$.
The complex smooth projective curve $X(\upi)$ can be considered as an
algebraic variety over $\mathbb{C}_p$ via the above isomorphism.  It can be rigidly analytified
to give the smooth projective rigid space $X(\upi)^{an}$ over $\Cp$.  The
sheaf $\omega_{\upi}$ can also be analytified and we denote this by $\omega_{\upi}^{an}$.
The functorial nature of analytification together with the rigid GAGA principle
tells us that there is an isomorphism of $\glafp$ modules

$$\varinjlim_{U \in \Sigma}\Gamma(X(\upi), \;\omega_{\upi}^{\otimes k}) \cong
\varinjlim_{U \in \Sigma}\Gamma(X(\upi)^{an}, \;(\omega_{\upi}^{an})^{\otimes k}).$$

$X(\upi)$ is  naturally the moduli space of elliptic curves with level $\upi$
structure and  consequently has a canonical model over $\mathbb{Q}$.  As such it can be
viewed as a nonsingular projective curve over $\Q_p$.  We can rigidly analytify
this curve to give a smooth projective rigid curve which we denote $X(\upi)^{an}/
\Qp$. Base change to $\Cp$ in the rigid analytic category recovers $X(\upi)^{an}$.
The points of $X(\upi)^{an}/
\Qp$ are elliptic curves (in the rigid or algebraic category) over a finite
extension of $\Qp$ together with appropriate level structure.

For an elliptic curve $E$ over a finite extension of $\Q_p$ there is a rational
number $V(E)$, defined using a lift of the Hasse invariant, which is a measure of the supersingularity of $E$.  If $0\leq V(E) < \frac{p^{(2-n)}}{p+1}$ then $E$ possesses
a canonical subgroup of order $p^n$.  For any $r\in\Q$ in this range we can define
$X(U^p \times \Gamma_1(p^n))_{\geq p^{-r}}/\Qp$ the admissible
open affinoid subspace of $X(U^p \times \Gamma_1(p^n))^{an}/\Qp$ whose non-cuspidal points correspond to elliptic curves
$E$ with $V(E)\leq r$, a level $U^p$ stucture and a point in the canonical
subgroup of order $p^n$. Similarly we define $X(U^p \times \Gamma_0(p^n))_{\geq p^{-r}}/\Qp$ as the admissible open affinoid subspace of $X(U^p \times \Gamma_0(p^n))^{an}/\Qp$
whose non-cuspidal points correspond to elliptic curves
$E$ with $V(E)\leq r$, a level $U^p$ structure and a subgroup of order $p^n$
which is the canonical subgroup. Geometrically, on any connected component of $X(U^p \times \Gamma_0(p))$ the supersingular locus is an annulus separating two ordinary
components, one containing $\infty$ and the other $0$.  The geometric connected
components of $X(U^p \times \Gamma_1(p))_{\geq p^{-r}}/\Qp$ are affinoid
subspaces of
these, which strictly contain
the ordinary locus with $\infty$ in some precise sense. We deduce that if $X(U^p \times \Gamma_1(p^n))^{an}/\Qp$
has $m$ connected components then so too does $X(U^p \times \Gamma_1(p^n))_{\geq p^{-r}}/\Qp$.  
These subspaces have been defined over $\Qp$ and we denote $X(\upi)_{\geq p^{-r}}/K$ as the base change to any complete extension $K$ of $\Qp$. We denote
the base change to $\Cp$
simply by $X(U^p \times \Gamma_1(p^n))_{\geq p^{-r}} \subset X(\upi)^{an}$.
If $U = \glzh$ we simply write $X_1(p^n)_{\geq p^{-r}}$ or $X_0(p^n)_{\geq p^{-r}}$.  These still make sense even if we are dealing with a coarse moduli
problem. If $p$ is odd let $\mathbf{q}=p$,
otherwise $\mathbf{q}=4$. 

\begin{defin} For $K/ \Qp$ a complete extension, $U \in \Sigma$, $k \in
 \mathbb{Z}$ and non-zero $r\in \mathbb{Q}$ in the appropriate range we define
$$\mathcal{M}_k(r,U^p, K) = \Gamma(X(U^p\times \Gamma_1(\mathbf{q}))_{\geq p^{-r}} /K,
(\omega_{\upq}^{an})^{\otimes k}),$$
the space of $r$-overconvergent
modular form over $K$ of tame level $U^p$ and weight $k$ with trivial action
of the Diamond operators at $p$.  
\end{defin}
Note that this demand on the Diamond operators means that we actually of
level
$\Gamma_0(\mathbf{q})$. We have phrased it in this way because once we have
redefined these spaces as functions using Eisenstein series in \S3.3 this will introduce a non-trivial action.

It is clear
that classical forms are overconvergent by restriction.  If we let $r$ vary
over the appropriate range then we get a direct system and we have 

$$ \mathcal{M}^{\dagger}_k(U^p, K) := \varinjlim_r \mathcal{M}_k(r,U^p, K),$$
the space of overconvergent modular forms of weight $k$ and tame level $U^p$.

For $V, U \in \Sigma$ open compact subgroups such that $V^p \subset U^p$
and $K/\Qp$ a complete extension  there is the functorial commutative diagram 
$$
\xymatrix{
X(V^p \times \Gamma_1(p^n))_{\geq p^{-r}}/K  \ar[r]^{i}  \ar[d] &X(U^p \times \Gamma_1(p^n))_{\geq p^{-r}}/K \ar[d]\\
X(V^p \times \Gamma_1(p^n))^{an}/K \ar[r] &X(U^p \times \Gamma_1(p^n))^{an}/K
}
$$
where the bottom horizontal arrow is just the usual inclusion functor. This
tells us that pullback of $r$-overconvergent forms of level $U^p$ and weight $k$ by the map $i$ gives $r$-overconvergent forms of level $V^p$ and weight $k$.
This gives the space of $r$-overconvergent modular
forms
over $K$
of integer weight a direct limit structure over $\Sigma$. We denote this space by $$\mathcal{M}_k(r, K):= \varinjlim_{U \in \Sigma}\mathcal{M}_k(r,U^p, K).$$
Similarly we have 
 $$\mathcal{M}^{\dagger}_k( K):= \varinjlim_{U \in \Sigma}\mathcal{M}^{\dagger}_k(U^p, K).$$
We drop the $K$ from the notation if $K=\Cp$. There is clearly an embedding of $\Cp$ vector spaces
$$ \mathcal{M}_k^{\Gamma_1(\mathbf{q})} = \varinjlim_{U \in \Sigma}\Gamma(X(\upq)^{an}, \;(\omega_{\upq}^{an})^{\otimes k}) \subset \mathcal{M}_k(r)  $$
\begin{prop}
There is an action of $\glafp$ on $\mathcal{M}_k(r)$ such that this embedding
is $\glaf$-equivariant.  
\end{prop}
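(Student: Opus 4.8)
The plan is to build the action by transporting, via the moduli interpretation, the $\glafp$-action on the tower of full modular curves to the tower of overconvergent loci, and then to check that the defining restriction map is equivariant. Recall that (exactly as in the classical Proposition~5) $\glafp$ acts on the projective system $\{X(U^p\times\Gamma_1(\mathbf{q}))^{an}\}_{U\in\Sigma}$: for $g\in\glafp$ and $U$ small enough in the system, $g$ induces an isomorphism $X(g^{-1}U^pg\times\Gamma_1(\mathbf{q}))^{an}\xrightarrow{\sim}X(U^p\times\Gamma_1(\mathbf{q}))^{an}$ which on the non-cuspidal locus sends a point $(E,\eta^p,P)$ — an elliptic curve $E$ with prime-to-$p$ level structure $\eta^p$ and $\Gamma_1(\mathbf{q})$-structure $P$ — to $(E,\eta^p\circ g,P)$, i.e. the same curve $E$ with its away-from-$p$ level structure translated by $g$. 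This action lifts canonically to the sheaves $(\omega_{\upq}^{an})^{\otimes k}$, because these are pulled back from the universal elliptic curve and the transition maps are the identity on the underlying $E$; passing to global sections and to the direct limit over $\Sigma$ recovers precisely the $\glafp$-action on $\mathcal{M}_k^{\Gamma_1(\mathbf{q})}$ used to state the proposition.

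The crux is that each such isomorphism $g$ preserves the overconvergent loci. Indeed, the admissible open affinoid $X(U^p\times\Gamma_1(\mathbf{q}))_{\geq p^{-r}}$ is cut out inside $X(U^p\times\Gamma_1(\mathbf{q}))^{an}$ by the conditions that $V(E)\leq r$ and that the $\Gamma_1(\mathbf{q})$-structure be a point of the canonical subgroup, and both $V(E)$ and the canonical subgroup of $E$ depend only on $E$ itself, not on the prime-to-$p$ level structure. Since $g$ leaves $E$ — hence $V(E)$ and the canonical subgroup — untouched, it restricts to an isomorphism
$$X(g^{-1}U^pg\times\Gamma_1(\mathbf{q}))_{\geq p^{-r}}\xrightarrow{\;\sim\;}X(U^p\times\Gamma_1(\mathbf{q}))_{\geq p^{-r}}$$
compatibly with $(\omega_{\upq}^{an})^{\otimes k}$. (At the cusps one argues as usual with the Tate curve; the fineness of the moduli problem is guaranteed by the definition of $\Sigma$, and the argument goes through for the coarse problems by the remark following the definition of these subspaces.) Pulling back global sections and taking the direct limit over $U\in\Sigma$ then produces a well-defined action of $\glafp$ on $\mathcal{M}_k(r)=\varinjlim_{U}\Gamma\bigl(X(U^p\times\Gamma_1(\mathbf{q}))_{\geq p^{-r}},(\omega_{\upq}^{an})^{\otimes k}\bigr)$.

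Finally, the embedding $\mathcal{M}_k^{\Gamma_1(\mathbf{q})}\hookrightarrow\mathcal{M}_k(r)$ is, level by level, nothing but restriction of global sections along the inclusion of the affinoid subdomain $X(U^p\times\Gamma_1(\mathbf{q}))_{\geq p^{-r}}\subset X(U^p\times\Gamma_1(\mathbf{q}))^{an}$; since by construction the action of each $g\in\glafp$ on the overconvergent locus is the restriction of its action on the full curve, the square with horizontal arrows $g^{\ast}$ and vertical arrows these restriction maps commutes, and passing to the direct limit gives the asserted $\glafp$-equivariance. (The injectivity of the embedding, used implicitly, follows from the rigid-analytic identity principle: the overconvergent locus is a non-empty admissible open in every connected component of $X(U^p\times\Gamma_1(\mathbf{q}))^{an}$, so a section vanishing on it vanishes identically.) I expect the only genuinely delicate point to be the claim that the prime-to-$p$ Hecke action is compatible with the $p$-adic overconvergent structure — that translating the level structure away from $p$ really does not disturb the Hasse invariant or the canonical subgroup — together with the mild bookkeeping needed to realise the \emph{a priori} correspondence-valued Hecke data as an honest group action in the limit; once these are in hand the rest is formal.
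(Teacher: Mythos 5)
There is a genuine gap, and it sits exactly at the point you yourself flag as ``the only genuinely delicate point'' but then dispose of with a false claim. You assert that for $g\in\glafp$ the map on moduli sends $(E,\eta^p,P)$ to $(E,\eta^p\circ g,P)$, ``the same curve $E$'' with translated away-from-$p$ level structure, and you then deduce that $V(E)$ and the canonical subgroup are untouched. This is only correct for integral $g$ (i.e.\ $g$ in $GL_2(\hat{\mathbb{Z}}^p)$). For a general $g\in\glql$, say $g=\begin{pmatrix} l&0\\0&1\end{pmatrix}$, the composite $\eta^p\circ g$ no longer carries the integral structure to the Tate module of $E$, so to land back in the moduli problem at finite level one must replace $E$ by an isogenous curve (this is precisely why the finite-level Hecke data are correspondences). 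Equivalently, in the adelic formulation the curve is only canonically defined up to isogeny, and the honest curve attached to a point changes when the lattice $\eta(\hat{\mathbb{Z}}^2)\subset T(E)\otimes\Q$ is moved by $g$. So the premise ``$g$ leaves $E$ untouched'' fails, and with it your one-line justification that the loci $X(U^p\times\Gamma_1(\mathbf{q}))_{\geq p^{-r}}$ are preserved.

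The correct statement, and the actual content of the proposition, is that $g$ changes $E$ only by an isogeny of degree prime to $p$, and that $V(E)$ and the canonical subgroup of order $p^n$ are invariant under such isogenies. This is what the paper proves: after decomposing $g$ as a product of an element of $\glpl\cap\glzp$ and an element of the level group, the question is reduced to showing that for $\gamma\in\glpl\cap\glzp$ the natural isogeny $E_{\gamma(\tau)}\rightarrow E_{\tau}$ coming from the inclusion of lattices has kernel with no $p$-torsion, which is checked by an explicit computation with the lattice $\mathbb{Z}(a\tau+b)\oplus\mathbb{Z}(c\tau+d)$ and the condition $\begin{pmatrix} a&b\\c&d\end{pmatrix}\in M_2(\mathbb{Z})\cap\glzp$. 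Your overall architecture (transport the action through the moduli interpretation, check the overconvergent affinoids are stable, then note that the embedding is restriction of sections so equivariance is formal) matches the paper, and the final formal step is fine; but without the prime-to-$p$ isogeny argument the crucial stability of the $\geq p^{-r}$ loci is unproved, so as written the proposal does not establish the proposition.
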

\begin{proof}Let $g \in \glafp$ and $V, U \in \Sigma$ such that 
$V^p \subset gU^pg^{-1}$. We need to show that the functorial map

$$\xymatrix{X(V^p \times \Gamma_1(p^n))^{an} \ar[r]^g &X(U^p \times \Gamma_1(p^n))^{an}}$$ gives rise to the commutative diagram
$$
\xymatrix{
X(V^p \times \Gamma_1(p^n))_{\geq p^{-r}}  \ar[r]^{g}  \ar[d] &X(U^p \times \Gamma_1(p^n))_{\geq p^{-r}} \ar[d]\\
X(V^p \times \Gamma_1(p^n))^{an} \ar[r]^g &X(U^p \times \Gamma_1(p^n))^{an}
}
$$
If we
can show this over $\Qp$ then the result follows after base change. Pullback
by the above map will give the desired action. Let $K$
be a finite extension of $\Qp$.  If $E/K$ is an elliptic curve such that
$0\leq V(E) < \frac{p^{(2-n)}}{p+1}$ then
it is naturally a complex curve using the isomorphism fixed at the beginning
of \S 3.   If $T(E)$ is the Tate module of $E$ then let us fix an isomorphism
$$\mu: T(E) \cong \hat{\mathbb{Z}}^2$$ up to an action (on the right) of $V^p \times \Gamma_1(p^n) \subset \glzh$ which fixes a point 
in the canonical subgroup of order $p^n$.  If we tensor both sides with $\mathbb{Q}$
then $\mu$ extends to an isomorphism
$$\mu : T(E) \otimes \mathbb{Q} \cong \af^2,$$
up to an action of $V^p \times \Gamma_1(p^n)$.  This corresponds to a $K$-valued
point of \mbox{$X(V^p \times \Gamma_1(p^n))_{\geq p^{-r}}$.}  The action on the
moduli problem induced by $g$ is to send $\mu$ to 
$$\xymatrix{\mu: T(E) \otimes \mathbb{Q}\cong \af^2 \ar[r]^{\;\;\;\;\;\;\;\;\;\;g} &\af^2
},$$
where the second arrow is the isomorphism induced by $g$ acting on the right.
We will denote this map by $\mu_g$.  This will correspond to an elliptic
curve isogenous to $E$.  If $E'$ is a second elliptic curve which possesses
an isogeny onto $E$ which is prime to $p$ then $V(E)=V(E')$ and the image
of the canonical subgroup of order $p^n$ in $E'$ is the canonical subgroup
of order $p^n$ in $E$. Because we can decompose $g$ as the
product of an element in $\glpl$ and $U_1(p^n)$, the problem is reduced to
showing that for any $\gamma \in \glpl \cap \glzp$ and $\tau \in \mathcal{H}$, $E_{\tau} := \mathbb{C}/ \mathbb{Z}\tau \oplus \mathbb{Z}$ is isogenous to $E_{\gamma(\tau)}$ of order prime to $p$. Here
$$E_{\gamma(\tau)} \cong \mathbb{C} / (\mathbb{Z}(a\tau +b) \oplus \mathbb{Z}(c\tau
+ d)),
$$
where
$$\begin{pmatrix}a
&b \\ c &d \end{pmatrix} \in M_2(\mathbb{Z}) \cap \glzp
.$$
There is a natural isogeny 
$$\phi: E_{\gamma(\tau)}\rightarrow E_{\tau},$$
induced by the obvious inclusion of lattices. If $z = A(a\tau +b) +B (c\tau
+ d) \in ker(\phi)$ where $A, B \in \mathbb{R}$, then 
$$\begin{pmatrix}A
&B \end{pmatrix}\cdot \begin{pmatrix}a
&b \\ c &d \end{pmatrix} \in \mathbb{Z}_p^2 \cap \mathbb{Q}^2 \Rightarrow
(A,B) \in \mathbb{Z}_p^2 \cap \mathbb{Q}^2$$
Hence if  $z$ is non-trivial it cannot be in the $p$ torsion of  $E_{\gamma(\tau)}$. Hence $\phi$
is of order prime to $p$, and we are done.
\end{proof}
This action is defined on $\mathcal{M}_k(r, K)$ for any $K/ \Qp$ a complete
extension.  As for the space of classical modular forms, $$\mathcal{M}_k(r,K)^{U^p} = \Gamma(X(U^p\times \Gamma_1(\mathbf{q}))_{\geq p^{-r}/K} ,
(\omega_{\upq}^{an})^{\otimes k}).$$ 
All of these spaces contain the as subspaces cuspidal forms which we always
denote by $\mathcal{S}$ instead of $\mathcal{M}$.  The space of
cusp forms $\mathcal{S}_{k}^{\dagger} \in \mathcal{M}_{k}^{\dagger}$ is closed
under the action of $\glafp$.
Now we make the following important definition: 
\begin{defin}
If $f$ is a cuspidal overconvergent eigenform in the sense of \cite{CME}, then
define
 $$\pi_{f,l} := \Cp[\glql]f \subset \mathcal{M}_k(r)\subset\mathcal{S}_k^{\dagger}$$
\end{defin} 
Propositions 5 and the proof of proposition 6 immediately tell us that this agrees with the original
definition if $f$ is classical. The remainder of this section will be devoted
to generalising this to arbitrary weights.

\subsection{Rigid $q$-expansions}
Overconvergent modular forms of arbitrary weight are defined using the Eisenstein Family.  To generalise the above constructions we need to understand the
$q$-expansions of overconvergent modular forms of integer weight. As before,
let $K/ \Qp$ be a complete field extension.  Because

$$\mathcal{M}_k(r,K) = \varinjlim_{U(N)\in \Sigma} \mathcal{M}_k(r,K)^{U(N)^p}$$
it will be sufficient to determine $q$-expansions for forms of full level
$N$ structure where $(N,p)=1$.  As in the algebraic setting this can be done using the Tate
curve. 

Fix $K\subset \Cp $, a complete extension of $\Qp$, such that it contains the primitive $N^{th}$ root of unity $\zeta
= e^{\frac{2\pi i }{N}}$.
Also fix $f \in \mathcal{M}_k(r,K)^{U(N)^p}$ where $(N,p)=1$.
Let $S$ denote the punctured disc of radius 1 with parameter $q^{1/N}$ in the category of rigid analytic
spaces
over $K$.   As usual, $\mathbb{G}_m/\langle q \rangle$ is the Tate curve over
$S$.  A full level $N$ structure on this curve is an isomorphism of group objects in
the category of rigid spaces over $S$:
$$i_N:(\mathbb{Z}/N\mathbb{Z})^2 \cong \mathbb{G}_m/\langle q \rangle[N]$$
Because $K$ contains a primitive $N^{th}$ root of unity this can be specified
on points.  For example:
$$i_N:(a,b) \longrightarrow \zeta^a\cdot q^{b/N}$$
The canonical subgroup of the Tate
curve is $\mu_p$ which naturally embeds in $\mathbb{G}_m$.  Now consider the triple
$(\mathbb{G}_m/q^{\mathbb{Z}}, i_N, i_p)$, where $i_N$ is a full level $N$
structure as above and $i_p$ is an embedding of $\mu_p$ in $\mathbb{G}_m$.
The non-cuspidal points of \mbox{$X(U(N)^p \times \Gamma_1(\mathbf{q}))_{\geq p^{-r}}/K$}
have the usual functorial interpretation, so this triple induces the map:
$$
\xymatrix{
S \ar[r] &X(U(N)^p \times \Gamma_1(\mathbf{q}))_{\geq p^{-r}}/K
}
$$
which we denote $(i_N, i_p)$. There is a canonical differential on the Tate curve, $dt/t$, where $t$ is the parameter on $\mathbb{G}_m$.
The pullback of $f$ under this map gives a global differential on the Tate curve over $S$.  All such
global differentials are of the form $\mathcal{O}(S)(dt/t)^{\otimes k}$.
Because the parameter on S is $q^{1/N}$ we know that the global functions on it
are finite-tailed laurent series in this variable with coefficients in $K$. The holomorphicity condition at cusps ensures that the pullback of $f$ is a differential which extends to the whole disc.  The  $q$-expansion of $f$
at the cusp defined by $(i_N, i_p)$ is $f_{(i_N, i_p)}(q)\in K[[q^{1/N}]]$
given by 
$$(i_N, i_p)^*f = f_{(i_N, i_p)}(q)(dt/t)^{\otimes k}.$$

Recall that classically automorphic forms have $q$-expansions for each $g\in
\glaf$.  In this setting we can almost recover this. More precisely, observe
 that $U_1(\mathbf{q})\vartriangleright U(N)^p \times \Gamma_1(\mathbf{q})$ and $$U_1(\mathbf{q})/(U(N)^p \times \Gamma_1(\mathbf{q})) \cong GL_2(\mathbb{Z}/N\mathbb{Z}) $$
This group naturally acts on our level structures and for $g \in U_1(\mathbf{q})$
we denote the new level structure by $g(i_N, i_p)$.  There is a canonical
choice of level structure $(i_N^{can}, i_p^{can})$ such that for $f$ classical,
the $q$-expansion
given by this process agrees with the 1-$q$-expansion as defined in \S 2.
Similarly for  $g \in U_1(\mathbf{q})$ the $g$-$q$-expansion agrees with the $q$-expansion
given by $g(i_N, i_p)$.  

Because $det(U_1(\mathbf{q}))=\zh$ we are assured that a finite number
of choices of $g$ will give maps from $S$ into each component of $X(U(N)^p \times \Gamma_1(\mathbf{q}))_{\geq p^{-r}}/K$.
Any function on a connected rigid space which vanishes on an admissible affinoid
subspace is identically zero everywhere.  From this we deduce that $f$ is uniquely determined
by a finite number of these $q$-expansions.
 
\subsection{Eistenstein Series of Integer Weight}
We will adopt the notational conventions of \cite{BE} and \cite{BMF}.  Let $\mathcal{W}/\Qp$ denote weight space;  the moduli
space (in the category of rigid spaces) of continuous morphisms $\Zp^* \rightarrow \mathbb{G}_m$.  Hence $\mathcal{W}(\Cp)$
naturally bijects with continuous homomorphisms $\kappa: \Zp^* \rightarrow
\Cp^*$. Define $\mathcal{D} = (\mathbb{Z}/ \mathbf{q}\mathbb{Z})^*$. The components of $\mathcal{W}$ naturally biject with characters
of $\mathcal{D}$ and we denote the identity component by $\mathcal{B}$.   For $p\neq 2$ let $\tau: \mathbb{Z}_p^*
\rightarrow \Cp^*$ be the composition of reduction and the teichmuller character.
If $p=2$ define $\tau(d)=(-1)^{(d-1)/2}$. Define the character $\langle\langle
 . \rangle\rangle:\Zp^* \rightarrow \Cp^*$ by $\langle\langle d \rangle\rangle
 = d/ \tau(d)$.  Clearly $\langle\langle .\rangle\rangle \in \mathcal{B}(\Cp)$.
Fix a $(p-1)$-st root $\pi$ of $-p$ in $\Cp$.  For $s \in \Cp$ such that
$|s| \leq |\pi/q|$, $\langle\langle. \rangle\rangle^s$ makes sense as a character
in $\mathcal{B}(\Cp)$. Such characters correspond to a closed disc $\mathcal{B}^*
\subset \mathcal{B}$.  There is an isomorphism
of rigid spaces between $\Bs$ and
the closed disc in $\B$ centred at the origin of radius $|\pi|$,
given by the map 
$$ s \rightarrow (1+q)^s-1 $$  viewing $s$ as a parameter on $\Bs$.
The inverse map is given by 
$$t \rightarrow log(t+1)/log(1+q).$$For  $s$
in this range and $\chi$,
any $\Cp$ -valued character of finite order on $\Zp^*$, we define $(s,\chi)\in
\mathcal{W}(\Cp)$ by $(s,\chi)(d) =\langle\langle d \rangle\rangle ^s  \chi(d)$.
 Such characters are called arithmetic.
If $\chi = \tau^n$ for some integer $n$ we abbreviate this notation to $(s,\chi)
= (s,n)$.  

The Iwasawa $p$-adic $L$-function is non-vanishing on $\mathcal{B} \backslash
\{0\}$.  Hence for any $\kappa \in \mathcal{B}(\Cp)\backslash
\{0\}$ there is a corresponding formal $q$-expansion $E_{\kappa}(q)$ which
$p$-adically interpolates classical Eisenstein series.  More precisely, for
$k$ an even integer greater than 2 which is divisible by $(p-1)$, $E_{(k,k)}$ is the level $\Gamma_0(\mathbf{q})$,
$p$-deprived Eisenstein series, in the sense of \cite{BIF}.  If we set $E_{(0,0)}
= 1$ then we get the Eisenstein family $\mathbb{E}(q) \in O(\mathcal{B})[[q]]$. 

For $k \in \mathbb{Z}$, $E_{(k,0)}$ is an overconvergent
modular form of
weight $k$ and level $\Gamma_1(q)$ over $\Qp$. It has character $\tau^{-k}$
with respect to the Diamond operators at $p$. If $k$ is at least 1 then it
is classical.  Proposition 6.1 of \cite{BE} tells us that for some $r \neq
0$, $E_{(k,0)}$ is non-vanishing on $X(U^p \times \Gamma_1(\mathbf{q}))_{\geq p^{-r}}$
for all $U \in \Sigma$. This $r$ is independent of $k$. Fix such an $r$.  For 
$$ f \in \Gamma(X(U^p\times \Gamma_1(\mathbf{q}))_{\geq p^{-r}},
(\omega_{\upq}^{an})^{\otimes k}) \text{ where } U \in \Sigma,$$
$f/E_{(k,0)}$ is a rigid analytic function on $X(U^p\times \Gamma_1(p))_{\geq p^{-r}}$. Hence we get an embedding 
\begin{align*}
\theta_k:\mathcal{M}_k(r) &\longrightarrow  \varinjlim_{U \in \Sigma}O(X(U^p\times \Gamma_1(\mathbf{q}))_{\geq p^{-r}}) := \mathcal{M}(r)\\
f &\longrightarrow f/E_{(k,0)}.
\end{align*}

We do not refer to this space as the the space of weight zero overconvergent modular forms
because we make no demands about the action of the diamond operators.  The image
of the $\theta_k$ is precisely the space of functions where the diamond operators act
by $\tau^k$.

Let $g \in \glafp$ and $V, U \in \Sigma$ such that 
$V^p \subset gU^pg^{-1}$. For $f \in \mathcal{M}_k(r)^{U^p}$, 

\begin{align*}g(f)&=g^*(f)\\&=
g^*(E_{(k,0)} . f/ E_{(k,0)}) \\
&= g^*(E_{(k,0)}) g^*(f/ E_{(k,0)})\\
&=g(E_{(k,0)}) g(f/ E_{(k,0)}),
\end{align*}
where $g^*$ is the pullback under the functorial morphism induced by $g$
as in proposition 6.
We deduce that $\theta_k$ is not an embedding of $\glafp$ modules. To rectify
this we make the following definition:
\begin{defin}
For $k \in \mathbb{Z}$, $g \in \glafp$ and $f \in \mathcal{M}(r)$ define 
$$g_k(f) = g(E_{(k,0)})/E_{(k,0)} \times g(f).$$
\end{defin}  
\begin{prop}
This gives rise to an action of $\glafp$ on $\mathcal{M}(r)$ which we refer
to as
the weight $k$ action.
With respect to the weight $k$ action on $\mathcal{M}(r)$, $\theta_k$
is an embedding of $\glafp$ modules.
\end{prop}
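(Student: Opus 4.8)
The plan is to verify the two assertions in order: first that $g \mapsto g_k$ defines a group action, and second that $\theta_k$ intertwines the weight $k$ action on $\mathcal{M}(r)$ with the $\glafp$-action on $\mathcal{M}_k(r)$ from Proposition 6. For the first point, the key observation is that $c_k(g) := g(E_{(k,0)})/E_{(k,0)}$ is a $1$-cocycle for the (already established, by Proposition 6) action $g \mapsto g^* = g(\cdot)$ of $\glafp$ on $\mathcal{M}(r) = \varinjlim_{U} O(X(U^p \times \Gamma_1(\mathbf{q}))_{\geq p^{-r}})$. Indeed, one computes directly that
\[
c_k(gh) = \frac{(gh)(E_{(k,0)})}{E_{(k,0)}} = \frac{g(h(E_{(k,0)}))}{g(E_{(k,0)})} \cdot \frac{g(E_{(k,0)})}{E_{(k,0)}} = g\!\left(\frac{h(E_{(k,0)})}{E_{(k,0)}}\right) c_k(g) = g(c_k(h))\, c_k(g),
\]
using that $E_{(k,0)}$ is non-vanishing on $X(U^p \times \Gamma_1(\mathbf{q}))_{\geq p^{-r}}$ for all $U \in \Sigma$ (so all the ratios are genuine rigid functions on the overconvergent locus, with poles only where they cancel) and that pullback is multiplicative. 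Then for $f \in \mathcal{M}(r)$,
\[
(gh)_k(f) = c_k(gh)\, (gh)(f) = g(c_k(h))\, c_k(g)\, g(h(f)) = c_k(g)\, g\!\left(c_k(h)\, h(f)\right) = g_k(h_k(f)),
\]
so the cocycle identity upgrades the existing action to the twisted action $g \mapsto g_k$. That $1_k = \mathrm{id}$ is immediate since $c_k(1) = 1$.

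For the second assertion, recall $\theta_k(f) = f/E_{(k,0)}$, which by the discussion preceding Definition 8 is an isomorphism of $\Cp$-vector spaces onto the subspace of $\mathcal{M}(r)$ on which the diamond operators at $p$ act by $\tau^k$ (well-definedness and injectivity being already recorded in the excerpt). It remains to check equivariance: for $g \in \glafp$ and $f \in \mathcal{M}_k(r)^{U^p}$ (with $V^p \subset gU^pg^{-1}$, as in the setup), the displayed computation immediately before Definition 8 gives
\[
\theta_k(g(f)) = \frac{g(f)}{E_{(k,0)}} = \frac{g(E_{(k,0)})\, g(f/E_{(k,0)})}{E_{(k,0)}} = \frac{g(E_{(k,0)})}{E_{(k,0)}}\, g(\theta_k(f)) = g_k(\theta_k(f)),
\]
which is exactly the claim. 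One should note that these equalities take place in $\mathcal{M}(r)^{V^p}$ for $V$ sufficiently small, and then pass to the direct limit over $\Sigma$; compatibility with the transition maps is automatic because everything is defined by pullback along the functorial morphisms of the curves $X(V^p \times \Gamma_1(\mathbf{q}))_{\geq p^{-r}}$, which commute with shrinking the tame level.

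I do not expect a serious obstacle here — the statement is essentially formal once Proposition 6 is in hand. The only point requiring a moment's care is bookkeeping with the direct limit and the non-vanishing hypothesis on $E_{(k,0)}$: one must be sure that $g(E_{(k,0)})/E_{(k,0)}$ is a \emph{unit-free} rigid function (i.e. regular, not just meromorphic) on the relevant overconvergent affinoid, which is guaranteed by Proposition 6.1 of \cite{BE} together with the fact that $g^*$ of a form non-vanishing on $X(U^p \times \Gamma_1(\mathbf{q}))_{\geq p^{-r}}$ is non-vanishing on $X(V^p \times \Gamma_1(\mathbf{q}))_{\geq p^{-r}}$ (the morphism induced by $g$ restricts to the overconvergent loci by the commutative diagram in the proof of Proposition 6). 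With that in place, the cocycle computation and the equivariance check are both one-line verifications.
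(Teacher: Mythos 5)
Your proposal is correct and is essentially the paper's argument: the paper's proof is literally ``Immediate from the above,'' referring to the displayed computation $g(f)=g(E_{(k,0)})\,g(f/E_{(k,0)})$ preceding Definition 8, and your cocycle identity for $g(E_{(k,0)})/E_{(k,0)}$ together with the equivariance check is just the explicit spelling-out of that. The bookkeeping remarks about the direct limit and the non-vanishing of $E_{(k,0)}$ (Proposition 6.1 of \cite{BE}) match the hypotheses the paper has already put in place, so nothing further is needed.
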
 
\begin{proof}
Immediate from the above.
\end{proof}
To extend this to arbitrary weights we generalise the twist factor
$g(E_{(k,0)})/E_{(k,0)}$.

\subsection{Overconvergent Modular forms of Arbitrary Weight}
In \S 3.3 we observed that for $k \in \mathbb{Z}$ we could define a \textit{weight} $k$ action on $\mathcal{M}(r)$ such that $\theta_k$ became an embedding
of $\glafp$-modules.  For $g \in \glafp$ the standard action was twisted
by the factor $g(E_{(k,0)})/E_{(k,0)}$.  We now generalise this to arbitrary
$\kappa \in \mathcal{B}(\Cp)$.
\begin{thm}
Let $g\in Gl_2(\Q_l)$, $l\neq p$ and $N\in \mathbb{Z}$ such that $U(N)\subset
g\glzh^p g^{-1}$.   There exists a unique 
rigid analytic function 
$$e_{g}\in O(\mathcal{B}\times X(U(N)^p \times \Gamma_0(p))_{\geq 1}/\Qp)$$ such that on any affinoid subdomain of $\mathcal{B}$ it is strictly overconvergent
for some $r \neq 0$, and such that for any $\kappa \in \mathcal{B}(\Cp)$
where $\kappa =(k,0)$ for $k \in \mathbb{Z}$,
$$e_{g,\kappa} =   g(E_{(k,0)})/E_{(k,0)}.$$

\end{thm}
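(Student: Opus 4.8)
The plan is to construct $e_g$ by $p$-adically interpolating the twist factors $g(E_{(k,0)})/E_{(k,0)}$ as $k$ varies, exploiting the fact that everything is governed by $q$-expansions and that the Eisenstein family $\mathbb{E}(q) \in O(\mathcal{B})[[q]]$ already provides the required interpolation at the level of $q$-expansions. First I would recall from Proposition 4 the explicit formula for the action of $g \in \glql$ on $q$-expansions: writing $h_l g = \begin{pmatrix} l^m & a/l^r \\ 0 & l^n \end{pmatrix} u$ with $u \in U_0(p)$, the $h$-$q$-expansion of $g(\varphi)$ is obtained from $f_1^{\varphi}(q)$ by the substitution $q \mapsto q^{l^{n-m}} e^{2\pi i(-a/l^{m+r})}$, together with the scalar $\chi(uh^l) l^{-(m+n)+nk}$ and a character factor. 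Applying this to $\varphi = E_{(k,0)}$, whose $1$-$q$-expansion is the specialization $E_{(k,0)}(q)$ of $\mathbb{E}(q)$ at $\kappa = (k,0)$, shows that $g(E_{(k,0)})/E_{(k,0)}$ has, for each choice of $h$, a $q$-expansion of the shape
$$
c(k) \cdot \frac{\mathbb{E}_{(k,0)}\big(q^{l^{n-m}} \zeta^{-a} \big)}{\mathbb{E}_{(k,0)}(q)},
$$
where $c(k) = (\text{character value}) \cdot l^{-(m+n)+nk}$, and all the $k$-dependence is concentrated in: (a) the exponent $nk$ inside $l^{nk}$, and (b) the Eisenstein $q$-expansion itself through the variable $\kappa = (k,0)$.

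Next I would interpolate each of these ingredients across $\mathcal{B}$. The factor $l^{nk}$ interpolates to $\kappa \mapsto \langle l^n \rangle_{\kappa}$-type expressions — more precisely one writes it in terms of $\langle\langle\cdot\rangle\rangle$ and $\tau$ as in \S3.3 so that it becomes the restriction to $(k,0)$ of a rigid function on $\mathcal{B}$ (here one uses $l \neq p$, so $l \in \Zp^*$ and $\langle\langle l\rangle\rangle$ makes sense). The Eisenstein family gives a two-variable object: $\mathbb{E}(q)/\mathbb{E}(q) \in \mathrm{Frac}(O(\mathcal{B})[[q]])$ is, after the substitution $q \mapsto q^{l^{n-m}}\zeta^{-a}$ in the numerator, a well-defined element of $O(\mathcal{B})[[q^{1/N}]]$ provided $\mathbb{E}$ is invertible — which is exactly the content of Proposition 6.1 of \cite{BE}: $E_{(k,0)}$ is non-vanishing on $X(U(N)^p \times \Gamma_1(\mathbf{q}))_{\geq p^{-r}}$ for a single $r$ independent of $k$, and this non-vanishing spreads out over affinoid subdomains of $\mathcal{B}$. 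Assembling these pieces I obtain, on each affinoid in $\mathcal{B}$, a formal $q$-expansion with coefficients in the relevant affinoid algebra that specializes correctly at every $(k,0)$.

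Then comes the step that produces an actual rigid function rather than a formal $q$-expansion: I would argue that this family of $q$-expansions is the $q$-expansion of a section of the structure sheaf of $\mathcal{B} \times X(U(N)^p \times \Gamma_0(p))_{\geq 1}/\Qp$. For integer $k$ this holds by Proposition 6 (the function $g(E_{(k,0)})/E_{(k,0)}$ is genuinely the pullback $g^*$ of a rigid function, as $E_{(k,0)}$ is overconvergent and non-vanishing there), so the family interpolates a function on a Zariski-dense set of fibres; by a standard rigid-analytic GAGA/gluing and Zariski-density argument — the integer points $(k,0)$ accumulate in $\mathcal{B}$, and a bounded family of functions on a nested family of affinoids whose $q$-expansions vary analytically in $\kappa$ glues to a function on the product — one gets $e_g \in O(\mathcal{B} \times X(U(N)^p \times \Gamma_0(p))_{\geq 1}/\Qp)$, strictly overconvergent on affinoid subdomains of $\mathcal{B}$. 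Uniqueness is immediate: two such functions agree on the fibres over all $(k,0)$, hence (as these are Zariski-dense and the target is reduced and separated, and a rigid function on a connected space vanishing on an affinoid subdomain vanishes, as already invoked in \S3.2) they agree on a dense set of fibres and therefore coincide.

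The main obstacle I expect is the transition from formal $q$-expansions over $O(\mathcal{B})$ (or its affinoid pieces) to an honest rigid-analytic function on the \emph{product} $\mathcal{B} \times X(U(N)^p \times \Gamma_0(p))_{\geq 1}$: one must control the radius of overconvergence uniformly as $\kappa$ ranges over an affinoid in $\mathcal{B}$ (this is where the $k$-independence of $r$ in Proposition 6.1 of \cite{BE} is essential), check that the substitution $q \mapsto q^{l^{n-m}}\zeta^{-a}$ — which may \emph{decrease} the order of vanishing and thus worsen overconvergence — still lands in the overconvergent locus $X(U(N)^p\times\Gamma_0(p))_{\geq 1}$ after possibly shrinking, and verify that the resulting family of functions genuinely glues (rather than just matching $q$-expansions). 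The $q$-expansion principle in the rigid setting, as set up in \S3.2, is the tool that makes the gluing and uniqueness work, since a finite set of these $q$-expansions determines a section on each component.
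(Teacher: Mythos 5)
Your reduction to $q$-expansions is the right starting point and matches the paper's computation (the formula of Proposition 4 applied to $E_{(k,0)}$, the factor $l^{-(m+n)+nk}$ interpolated by $l^{-(m+n)}\kappa(l^n)$, the Eisenstein family carrying the $\kappa$-dependence), and your uniqueness argument is fine. The genuine gap is exactly at the step you yourself flag as the main obstacle and then dispose of by appeal to ``a standard rigid-analytic GAGA/gluing and Zariski-density argument'': no such general principle exists. A formal $q$-expansion with coefficients in $O(\mathcal{B})$ (or an affinoid piece of it) whose specialisations at the integer points $(k,0)$ happen to be $q$-expansions of rigid functions on $\xu$ is not thereby the $q$-expansion of a rigid function on the product; Zariski density of the integer weights gives uniqueness but no existence, and producing the two-variable function with a uniform radius of overconvergence is the entire content of the theorem. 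Note also that for non-integral $\kappa$ the symbol $E_{\kappa}$ is only a formal $q$-expansion, not a section on the curve, so the ``invertibility of $\mathbb{E}$'' you want cannot be extracted from Proposition 6.1 of \cite{BE}, which concerns the individual classical weights $k$; there is as yet no two-variable object whose non-vanishing could ``spread out'' over $\mathcal{B}$.

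The paper closes this gap with Coleman's specific machinery, and some version of it is unavoidable. One first works on a small disc $\mathcal{B}[0,t]$: Theorem B4.1 of \cite{BMF} makes $E_{(s,0)}/E^s$ (with $E=E_{(1,0)}$, whose $q$-expansion is $1$ mod $\mathbf{q}$) an invertible rigid function on $\mathcal{B}[0,t]\times X_0(\mathbf{q})_{\geq p^{-r}}$, which is then pulled back by $g$; the remaining factor is handled by normalising $g^*(E)/E$ (using the lemma that its constant terms at the cusps $c_A$ all equal $l^{-(m+n)}\langle\langle l^n\rangle\rangle$) so that the $q$-expansion principle gives $|f-1|\leq\epsilon<1$ on some $\xu$, whence $f^s$ is defined by the binomial series as a genuine rigid function of the pair $(s,x)$. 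This exponentiation of an honest function close to $1$ is the mechanism that actually produces two-variable rigidity, and nothing in your sketch replaces it. One then multiplies the pieces together, observes that all $q$-expansions lie in $\Lambda_K[[q]]$, and invokes Theorem 2.1 of \cite{CCS} to extend from $\mathcal{B}[0,t]$ to all of $\mathcal{B}$ --- a second step your proposal skips, since your interpolation as stated would at best live over a small disc of weights where the relevant series converge. Finally, the statement requires $e_g$ to be defined over $\Qp$, whereas the construction uses the $N$-th root of unity $\zeta$; the paper finishes with a Galois descent argument ($\sigma(e_g)=e_g$ checked on $q$-expansions at the cusps $c_A$, using that the coefficients lie in the Iwasawa algebra), which is absent from your proposal.
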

\begin{proof}
Essentially this theorem is a generalisation of Coleman's construction of
Hecke operators in \cite{BMF}.
As such we follow Coleman's approach, first giving a proof at the centre of $\mathcal{B}^*$ and then extending to all of $\mathcal{B}$ using theorem 2.1 of \cite{CCS}. We will be making extensive use of $q$-expansions so must first work over
a finite extension of $\Qp$ and then use a descent argument. 

Fix $K\subset \Cp$, a finite extension of $\Qp$, such that it contains the primitive $N^{th}$ root of unity $\zeta
= e^{\frac{2\pi i }{N}}$.
From now on, unless otherwise stated, all rigid spaces will be over $K$.
Note that without loss of generality we may assume that $N=l^d$ for some
$d \in \mathbb{Z}$.   
Again following \S B.1 of \cite{BMF}, $\Cp$-valued points of $\Bs$ can naturally
by identified with arithmetic characters $(s,0)$.
As in \S3.3 we define $E_{(s,0)}(q)$ a formal $q$-expansion, which
$p$-adically interpolates classical Eisenstein series. 
For $t\in \mathbb{Q}$, such that $0 < t < |\pi /\mathbf{ q}|$ let $\mathcal{B}[0,t]$ be the closed
disc of radius $t$ in $\Bs$.  
We will first prove $g^*E_{(s,0)}/E_{(s,0)}$,
interpreted in the appropriate sense, is a function on the rigid space $\mathcal{B}[0,t]\times
\xu$ for some non-zero $r$ and $t$.  
Any such function comes equipped with a $q$-expansions at each cusp as explained
in \S3.2. We must show that there is a rigid function on $B[0,t]\times
\xu$ whose $q$-expansions are the same as those of our "formal" object $g^*E_{(s,0)}/E_{(s,0)}$.
Any function with this property is necessarily unique because it is totally
determined on a Zariski dense set (the classical points) on any of the components.
\\ \\ 
Define the weight 1, level $\Gamma_1(\mathbf{q})$
classical modular form over $\Zp$, $$E:=E_{(1,0)}$$ with character $\tau^{-1}$ for the action
of the diamond operators at $p$. Recall that using the conventions of \S2,
$E \in \mathcal{M}_1^{U_1(\mathbf{q})}(\tau)$.
At the cusp $\infty$ on $X_1(\mathbf{q})$ it has a $q$-expansion with the property
$$E(q)\equiv 1 \text{ mod} \mathbf{q}.$$
In particular for any $s \in \Bs(\Cp)$, $E(q)^s$ makes sense as a formal
$q$-expansion.   Theorem $B4.1$ of \cite{BMF} and the comment immediately
afterwards tell us that $$E_{(s,0)}(q)/E(q)^s$$ is the $q$-expansion at the
cusp $\infty$  of an invertible rigid analytic function  on $B[0,t]\times X_0(\mathbf{q})_{\geq
p^{-r}}$ for some $t$ in the above range and $r$ non-zero which we now fix. As explained in
proposition 6,  $g$ induces the morphism:
$$
\xymatrix
{\xu \ar[r]^{\;\;\;\;\;\;g} &X_0(\mathbf{q})_{\geq p^{-r}}
}
$$
This gives rise to the morphism: 
$$
\xymatrix
{B[0,t]\times\xu \ar[r]^{\;\;\;\;\;\;1 \times g} &B[0,t] \times X_0(\mathbf{q})_{\geq p^{-r}}
}.
$$
By pullback we may consider $g^*(E_{(s,0)}/E^s)$
as a rigid function on $B[0,t]\times\xu $.  
\\ \\
For $A \in (\mathbb{Z}/l^d \mathbb{Z})^*$ define $h^A$ to be the element of $U_0(\mathbf{q})$
trivial everywhere except $l$ where it is of form 
$$h^A_l =  \begin{pmatrix}A'
&0 \\ 0 &1 \end{pmatrix},  
$$where $A'$ is a fixed integer lift of $A$. 
Let $c_A$ be the cusp in $\xu$ determined by $h^A$ in the sense of \S3.2. For any component of $\xu$
there is an $A$ such that the cusp $c_A$ is on that component. Hence any
function on $\xu$ is uniquely determined by its $q$-expansions at these cusps.
\\ \\
For positive $k \in \mathbb{Z}$, $E_{(k,0)}$ and $E^k$ are classical of weight
$k$, level $U_1(\mathbf{q})$ and character $\tau^{k}$.  Let us adopt the
notational conventions of proposition 4.  Hence at $c_A$,
$$g^*E_{(k,0)}(q) = \tau^{k}(u) l^{-(n+m)+nk}E_{(k,0)}(\alpha q^{l^{n-m}})$$
and
$$g^*E^k(q) = \tau^{k}(u) l^{-(n+m)+nk}E^k(\alpha q^{l^{n-m}}),$$
Where $\alpha = e^{2\pi i (\frac{-a}{l^{m+r}})}$. We deduce that at $c_A$,

$$g^*(E_{(k,0)}/E^k)(q) = g^*E_{(k,0)}(q)/ g^*E^k(q) = \frac{E_{(k,0)}(\alpha
q^{l^{n-m}} ) }{E(\alpha q^{l^{n-m}})^k}.$$
Within $\mathcal{B}[0,t]$ there is a zariski dense set of positive integers,
hence we deduce that at $c_A$,
$$g^*(E_{(s,0)}/E^s)(q) = E_{(s,0)}/E^s(q^{l^{n-m}}\cdot \zeta) = \frac{E_{(s,0)}(\alpha
q^{l^{n-m}}\cdot )  }{E(\alpha q^{l^{n-m}})^s   }.$$

In a similar way $g^*(E)/E$ may be considered as a rigid function on $\xu$.
At $c_A$,

$$(g^*(E)/E) (q) = \tau(u) l^{-(n+m)+n} \frac{E(\alpha
q^{l^{n-m}})}{E(q)}.$$

Note that $\tau(u) = \tau(l^{-n})$ and hence the constant term of this $q$-expansion
is $l^{-(m+n)}\langle\langle
l^n\rangle\rangle$. We want to normalise this function so that on each component of $\xu$ there is a cusp with
$q$-expansion congruent to $1$ mod $\mathbf{q}$.  To do this we will make
use of the following proposition:

\begin{lem}
The $q$-expansions of $g^*(E)/E$ at the cusps $c_A$ all have the constant
term $l^{-(m+n)}\langle\langle
l^n\rangle\rangle$.
\end{lem}
\begin{proof}
It is enough to show that  if

$$ h^A_l g = \begin{pmatrix}l^m
&\frac{a}{l^r} \\ 0 &l^n \end{pmatrix}\cdot u , \text{ with } a, r, m, n \in
\mathbb{Z}, u \in U_0(\mathbf{q}),$$
then $n$ and $m$ and $\tau(u) = \tau(l^{-n})$ are independent of $A$.
This can be checked by elementary means.  
\end{proof}
Hence we may normalise $g^*(E)/E$ to give $f$, such that for every component
of $\xu$, $f$ has a cusp
with $q$-expansion congruent to 1 modulo $\mathbf{q}$.  
By the $q$-expansion principle such a function must reduce to $1$ on the components of the special fibre
of the Deligne-Rapaport/Katz-Mazur model of $X(U(N)^p \times \Gamma_1(\mathbf{q}))$
containing the reduction of the cusps in $X(U(N)^p \times \Gamma_1(\mathbf{q}))_{\geq
1}$.  Hence 
$$|f-1|_{ X(U(N)^p\times\Gamma_1(\mathbf{q}))_{\geq
1}}\leq |\mathbf{q}|.$$
By continuity 
$$|f-1|_{ X(U(N)^p\times\Gamma_1(\mathbf{q}))_{\geq
1}}= \lim_{r\rightarrow 0^+}|f-1|_{\xu}.$$
We conclude that for any $\epsilon\in \mathbb{R}, |\mathbf{q}|<\epsilon<
1$,
there exists a non-zero $r$ such that 
$$|f-1|_{ \xu}\leq \epsilon.$$
Fix such an $r$ and $\epsilon$.  Fix $t \in \mathbb{Q}^*$ such that  
$$\begin{pmatrix}
s \\ n
\end{pmatrix}T^n \rightarrow 0, \text{ as } n \rightarrow \infty, \text {
for
} |T| \leq \epsilon, |s|\leq t.$$
If we view $f$ as a rigid analytic function on $\xu$ then it makes sense
to talk about $f^s$ as a rigid analtyic function on $\mathcal{B}[0,t]\times
\xu$.  At $c_A$ it has $q$-expansion 
$$f^s(q)=\frac{E(\alpha
q^{l^{n-m}})^s}{E(q)^s}.$$
We may multiply this function by $E^s/E_{(s,0)}$ and $g^*(E_{(s,0)}/E^s)$
to get $F$.  At $c_A$ we have

$$F(q) = \frac{E_{(s,0)}(\alpha q^{l^{n-m}})  }{E(\alpha q^{l^{n-m}})^s   }\cdot \frac{E(\alpha q^{l^{n-m}})^s}{E(q)^s}
\cdot \frac{E(q)^s}{E_{(s,0)}(q)} = \frac{E_{(s,0)}(\alpha q^{l^{n-m}})}{E_{(s,0)}(q)}.$$
Multiply this function by $l^{-(m+n)}\langle\langle
l^n\rangle\rangle ^{s}$, to give a function $e_g$. At $k \in \mathbb{Z}$,
$k \in \mathcal{B}[0,t](\Cp)$ this function has the same $q$-expansion at $c_A$ as $g^*(E_{(k,0)})/E_{(k,0)}$ for all $A \in (\mathbb{Z}/l^d \mathbb{Z})^*$.

As observed on page 5 of \cite{CCS}
the Eisestein family over $\B$
has $q$-expansion in $\Lambda[[q]]$, where $\Lambda$ is the Iwasawa algebra.
Let $\Lambda_K$ denote the Iwasawa algebra over the ring of integers of $K$. Viewing $\kappa$
as a parameter on $\mathcal{B}$, $l^{-(m+n)}\kappa(l^n)$ becomes an element
of $\Lambda_K$.
We deduce that at any cusp $e_g(q) \in \Lambda_K[[q]]$. Now we can apply the
results of Thm 2.1 \cite{CCS}, which states that  $e_g$ must extend uniquely
over $\mathcal{B}$, replacing  $\Lambda$ with $\Lambda_K$ and $Z$ with
$X(U(N)^p\times \Gamma_1(\mathbf{q}))_{\geq 1}$.  The proof is identical to that given in \cite{CCS}. Note that by construction $e_g$ has trivial action of the Diamond
operators at $p$ so is of level $U_0(\mathbf{q})$.
\\ \\
Finally we must prove that $e_g$ descends to a function on $\mathcal{B}\times X(U(N)^p \times \Gamma_0(\mathbf{q}))_{\geq 1}/\Qp$. To do this we will use a Galois
descent argument.

We may assume that $K = \Qp(\zeta)$.  This is a Galois extension of $\Qp$
with Galois group $G= (\mathbb{Z}/l^d \mathbb{Z})^*$.  Let us fix an
affinoid subdomain $Y = Sp(A) \subset \mathcal{B}$ defined over $\Qp$.  Let $Y_K = Sp(A\otimes K)$ be the base change to $K$. 
When we restrict the function $e_g$ to $Y_K$ we know by compactness
that there exists a non-zero $r$ such that $e_g$ is a function on $Y_K
\times \xz /K$.  $\xz / \Qp$ is an affinoid space which we denote $Sp(B)$.
In this sense $e_g \in (A\hat{\otimes}B)\otimes K$, where both products are
over $\Qp$.  We wish to show that $e_g \in A\hat{\otimes}B$.  There is an
natural action of $G$ on $(A\hat{\otimes}B)\otimes K$ and any function which
is invariant under $G$ is in $A\hat{\otimes}B$. Hence we must show that for
any $\sigma \in G$, $e_g$ and $\sigma(e_g)$ are equal.  It is enough to show
that they have the same $q$-expansions.
\\ \\
Let $c$ be a cusp in $\xz$, corresponding to $h \in U_0(\mathbf{q})$. Let
$t \in K^*$, $|t| < 1$ and $\mu \in Y(K)$.  Such points naturally have an
action of $G$.  
The
data $$( \mathbb{G}_m/ \langle t^N\rangle, x, y, \mu),$$ where $x$ and $y$
are generators of the
$l^d$ torsion of $\mathbb{G}_m/ \langle t^N\rangle$,  gives a $K$-valued point of
$Y_K\times  \xz/ K$ which we denote by $\phi$. Note that we make no demands
on level structure at $p$ as it must be the canonical subgroup by construction.
We also demand that 
$$\begin{pmatrix}
x \\ y
\end{pmatrix} = \gamma \begin{pmatrix}
\zeta \\ t
\end{pmatrix}$$ 
where $\gamma$ is the reduction of $h$ modulo $U(N)^p \times
\Gamma_0(\mathbf{q})$.  Hence the $q$-expansion at $c$ tells us what $e_g$
evaluated at $\phi$ is.  
\\ \\
For any $\sigma \in G$ we have the following commutative
diagram:
$$
\xymatrix{
K\ar[r]^{\;\;\sigma} \ar[d]&K \ar[r]^{i}\ar[d]^{} &K \ar[r]^{\sigma^{-1}} \ar[d]& K
\ar[d]
\\
(A\hat{\otimes}B)\otimes K\ar[r]^{\;\;\sigma} &(A\hat{\otimes}B)\otimes K\ar[r]^{\;\;\phi}&K \ar[r]^{\sigma^{-1}} &K
}
$$
The composition of the bottom arrows in this diagram give a new $K$-valued
point of $Y_K\times  \xz/ K$ which we will call $\sigma^{-1}(\phi)$.  Moduli
theoretically this point corresponds to the data
$$( \mathbb{G}_m/ \langle \sigma^{-1}(t)^N\rangle, \sigma^{-1}(x), \sigma^{-1}(y), \sigma^{-1}(\mu)),$$
where$$\begin{pmatrix}
\sigma^{-1}(x) \\ \sigma^{-1}(y)
\end{pmatrix} = \gamma \begin{pmatrix}
\sigma^{-1}(\zeta) \\ \sigma^{-1}(t)
\end{pmatrix}.$$ 
We deduce that $\sigma(e_g)$ evaluated at $\phi$ is equal to $\sigma$ applied
to the evaluation of $e_g$ at $\sigma^{-1}(\phi)$.  If we can show that this
is equal to $e_g$ evaluated at $\phi$ we are done.

If $\sigma^{-1} = A \in (\mathbb{Z}/l^d \mathbb{Z})^*$ then the cusp corresponding
to $\sigma^{-1}(\phi)$ is is determined by $h^A \cdot h$. Let us call this
cusp $\sigma^{-1}(c)$.
If 
$$h_l\cdot g=\begin{pmatrix}l^m
&\frac{\alpha}{l^r} \\ 0 &l^n \end{pmatrix}\cdot u , \text{ where } a, r, m, n \in
\mathbb{Z}, u \in U_0(\mathbf{q})$$ 
then at $c$,
$$e_g(q) = l^{-(m+n)}\kappa(l^n)\frac{E_{\kappa}(\alpha q^{l^{n-m}} )}{E_{\kappa}(q)}.$$
Here $\alpha$ is some power of $\zeta$.  
Note that
$$h^A_l\cdot h_l\cdot g=\begin{pmatrix}l^m
&\frac{a\cdot A'}{l^r} \\ 0 &l^n \end{pmatrix}\cdot u' , \text{ where } \alpha, r, m, n \in
\mathbb{Z}, u' \in U_0(\mathbf{q})$$
Hence the $q$-expansion of $e_g$ at $\sigma^{-1}(c)$ is
$$e_g(q) = l^{-(m+n)}\kappa(l^n)\frac{E_{\kappa}(\alpha^{A'} q^{l^{n-m}} )}{E_{\kappa}(q)}= l^{-(m+n)}\kappa(l^n)\frac{E_{\kappa}(\sigma^{-1}(\alpha) q^{l^{n-m}} )}{E_{\kappa}(q)}.$$
The coefficients of the Eisenstein family are elements of $\Lambda$. The
same is true of the function $l^{-(m+n)}\kappa(l^n)$.  Any such function
evaluated
at $\mu$ is therefore the same as $\sigma$ applied to it evaluated at $\sigma^{-1}(\mu)$.
We deduce that $e_g$ and $\sigma(e_g)$ have the same $q$-expansion at $c$.
Therefore $e_g$ and $\sigma(e_g)$ are equal and we are done.
 
\end{proof}

For the remainder of this section let all rigid spaces be over $\Cp$.  An
arbitrary weight is a morphism of rigid spaces $\phi: Y \rightarrow
 \mathcal{W}$.  For ease of exposition let us assume that $Y =Sp(A)$ is affinoid
and the image of $\phi$
is contained in the component $\mathcal{W}_i$ defined by the
character $\tau^i$. For $U \in \Sigma$ we define the space of $r$-overconvergent
modular forms of tame level $U^p$ and weight $\phi$ to be the subspace 
$$\mathcal{M}_{\phi}(r,U^p) \subset  O(Y \times X(U^p\times\Gamma_1(\mathbf{q}))_{\geq
p^{-r}}),$$
whose elements have character $\tau^i$ with respect to the diamond operators at $p$.  Now let
$$\mathcal{M}_{\phi}^{\dagger}(U^p) =\varinjlim_{r} \mathcal{M}_{\phi}(r,U^p),$$
and define 
$$\mathcal{M}_{\phi}(r) := \varinjlim_{U\in\Sigma}\mathcal{M}_{\phi}(r,U^p),$$
$$ \mathcal{M}_{\phi}^{\dagger} := \varinjlim_{U\in\Sigma}\mathcal{M}_{\phi}^{\dagger}(U^p)$$ the space of $r$-overconvergent (resp. overconvergent) modular forms of weight $\phi$.
All these spaces are  $A$-modules and the come equipped with a (weight zero)
action of $\glafp$. Similarly we may replace all these spaces by the subspaces
of cusp forms $\mathcal{S}_{\phi}(r,U^p)$, $\mathcal{S}_{\phi}(r)$, $\mathcal{S}_{\phi}^{\dagger}(U^p)$
and $\mathcal{S}_{\phi}^{\dagger}$.  Both $\mathcal{S}_{\phi}(r)$ and $\mathcal{S}_{\phi}^{\dagger}$
are naturally (weight zero) $\glafp$-modules. Note that if $\phi$ is classical
(i.e. $\phi = k \in
\mathbb{Z}$)  then $\mathcal{M}_{\phi}^{\dagger}$ is precisely the image of
$\theta_k$ in \S3.3.
\\ \\
There is a natural isomorphism between $\mathcal{W}_i$ and the identity component $\mathcal{B}$. For $g \in \glql$ we can pullback $e_g$ along this isomorphism and then along $\phi$ to get 
$$e_{(g,\phi)} \in \varinjlim_{U\in \Sigma} \varinjlim_{r} O(Y \times X(U^p\times\Gamma_0(\mathbf{q}))_{\geq
p^{-r}}).$$

If $f\in \mathcal{M}_{\phi}^{\dagger}$ and $g \in \glql$ define   
$$g(f):=e_{(g, \phi)}\cdot g^*(f) \in \mathcal{M}_{\phi}^{\dagger},$$
where $g^*$ is the \textit{weight} $0$ action.  
\begin{prop}
This defines an action (weight $\phi$) of $\glql$ on $\mathcal{M}_{\phi}^{\dagger}$
(and on $\mathcal{M}_{\phi}(r)$ for some $r\neq 0$ dependent on $\phi$)
which is functorial with resect to the weight.
\end{prop}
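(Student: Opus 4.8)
The plan is to verify the group-action axioms for the assignment $g \mapsto (f \mapsto e_{(g,\phi)}\cdot g^*(f))$, reducing everything to the corresponding statement for the weight $k$ action of Proposition~11, which we already know, together with the cocycle-type property of the twist factor $e_g$ coming from Theorem~9. First I would check that $g(f)$ really lies in $\mathcal{M}_{\phi}^{\dagger}$: since $g^*$ is the weight $0$ action it sends a function on $Y\times X(U^p\times\Gamma_0(\mathbf{q}))_{\geq p^{-r}}$ to one on $Y\times X(V^p\times\Gamma_0(\mathbf{q}))_{\geq p^{-r'}}$ for suitable $V,r'$, and multiplication by $e_{(g,\phi)}$ (which by Theorem~9 is strictly overconvergent on each affinoid subdomain of $\mathcal{B}$, hence on $Y$ after pullback along $\phi$ and the identification $\mathcal{W}_i\cong\mathcal{B}$) keeps us in the overconvergent world; the condition on diamond operators at $p$ is preserved because $e_g$ was constructed to have trivial diamond action at $p$. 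So $g$ preserves $\mathcal{M}_{\phi}(r)$ for $r$ small enough depending on $\phi$, and passes to the limit on $\mathcal{M}_{\phi}^{\dagger}$.

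Next comes the associativity computation. For $g,h\in\glql$ we must show $g(h(f)) = (gh)(f)$, i.e.
$$e_{(g,\phi)}\cdot g^*\!\bigl(e_{(h,\phi)}\cdot h^*(f)\bigr) = e_{(gh,\phi)}\cdot (gh)^*(f).$$
Since $g^*$ is multiplicative and $(gh)^* = h^*\circ g^*$ (functoriality of pullback along the moduli morphisms, exactly as in the proof of Proposition~6), the left side equals $e_{(g,\phi)}\cdot g^*(e_{(h,\phi)})\cdot (gh)^*(f)$, so the identity we need is the cocycle relation
$$e_{(gh,\phi)} = e_{(g,\phi)}\cdot g^*(e_{(h,\phi)}).$$
I would prove this by the same strategy as Theorem~9 itself: both sides are rigid functions on $Y\times X(U(N)^p\times\Gamma_0(\mathbf{q}))_{\geq p^{-r}}$ for $r$ small, a space each of whose connected components carries a Zariski-dense set of classical points $(k,0)$ with $k\in\mathbb{Z}$ a positive integer; so it suffices to check equality at such weights, where $e_{(g,(k,0))} = g(E_{(k,0)})/E_{(k,0)}$ and the relation reduces to
$$\frac{(gh)(E_{(k,0)})}{E_{(k,0)}} = \frac{g(E_{(k,0)})}{E_{(k,0)}}\cdot g^*\!\Bigl(\frac{h(E_{(k,0)})}{E_{(k,0)}}\Bigr),$$
which is immediate from multiplicativity of $g^*$ and $(gh)^* = h^*\circ g^*$. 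One can also see it directly on $q$-expansions via Proposition~4. Normalisation $1(f) = f$ is clear since $e_{(1,\phi)} = 1$ (it has the same $q$-expansions as $E_{(k,0)}/E_{(k,0)} = 1$ at all classical points, hence is identically $1$ by the density argument). Functoriality in the weight — compatibility with a morphism $Y'\to Y$ of weights — follows because $e_{(g,\phi)}$ was defined by pulling back the single function $e_g\in O(\mathcal{B}\times X(U(N)^p\times\Gamma_0(p))_{\geq 1})$ of Theorem~9 along $\phi$, so pulling back further along $Y'\to Y$ commutes with everything.

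The main obstacle is the bookkeeping needed to make the Zariski-density reduction legitimate across the whole base: one must know that for a fixed small $r$ the product $e_{(g,\phi)}\cdot g^*(e_{(h,\phi)})$ and $e_{(gh,\phi)}$ are both defined as honest rigid functions on a \emph{common} affinoid $Y\times X(U(N)^p\times\Gamma_0(\mathbf{q}))_{\geq p^{-r}}$ — this requires combining the (possibly different) radii of overconvergence produced by Theorem~9 for $g$, $h$ and $gh$, and checking that shrinking $r$ is harmless, which is routine but must be stated. A secondary point is that $g^*$ of an overconvergent function is only overconvergent after a further shrinking of $r$ (the canonical-subgroup radius can degrade under the isogeny attached to $g$, cf. the proof of Proposition~6 where one checks the isogeny has degree prime to $p$), so the final radius for which the weight $\phi$ action is defined genuinely depends on $\phi$ and on the elements involved, as the statement already anticipates; on $\mathcal{M}_{\phi}^{\dagger}$ this disappears in the direct limit. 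Everything else is formal manipulation of pullbacks and $q$-expansions of the kind already carried out in Propositions~2, 4 and 6.
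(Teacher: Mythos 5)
Your proposal is correct and follows essentially the same route as the paper: reduce the action axiom to the cocycle relation $e_{(gh,\phi)} = e_{(g,\phi)}\cdot g^{*}(e_{(h,\phi)})$, verify it at the Zariski-dense set of integer weights in $\mathcal{B}$ (via $q$-expansions, where it is immediate from $e_{(g,(k,0))}=g(E_{(k,0)})/E_{(k,0)}$), and obtain functoriality in the weight from $(\pi\times 1)^{*}(e_{(g,\phi)})=e_{(g,\phi\cdot\pi)}$; your extra bookkeeping on radii of overconvergence is harmless. One small slip: the parenthetical ``$(gh)^{*}=h^{*}\circ g^{*}$'' is inconsistent with the identity $g^{*}(h^{*}(f))=(gh)^{*}(f)$ you actually use, but the latter is the correct convention for the weight-zero (left) action here, so the argument stands.
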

\begin{proof}
Let $h, g \in \glql$.  Showing that this defines an action is equivalent to showing that
$e_{h}\cdot h^*(e_g) = e_{hg}$, where $h^*$ is the weight zero action. This is
true after restricting to any integer point of $\mathcal{B}$.  These are
Zariski dense in $\mathcal{B}$, hence they must have the same $q$-expansions
so are equal.

If $X=Sp(B)$ is an affinoid
rigid space  together with a morphism $\pi: X \rightarrow Y$. Then
pull back by morphisms of form 
$$\xymatrix{
X \times _{\Qp} X(U^p \times \Gamma_1(p))_{\geq p^{-r}}  \ar[r]^{\pi \times
1} &Y \times_{\Qp} X(U^p \times \Gamma_1(p))_{\geq p^{-r}} 
}$$
induces a map $\Pi:\mathcal{M}^{\dagger}_{\phi \cdot \pi} \rightarrow \mathcal{M}^{\dagger}_{\phi}$.
This maps is actually a $\glql$ module homomorphism.  This follows from the fact
that $(\pi \times 1)^* (e_{(g,\phi)})=e_{(g, \phi\cdot\pi)}$, where $(\pi
\times 1)$ is a
morphism at the appropriate tame level.

\end{proof}
This  action preserves the  subspace of overconvergent
cuspforms $ \mathcal{S}_{\phi}^{\dagger} \subset \mathcal{M}^{\dagger}_{\phi}$.
It is also true that for any $U\in \Sigma$, 
$$(\mathcal{M}^{\dagger}_{\phi})^{U^p} = \mathcal{M}_{\phi}^{\dagger}(U^p)$$
There is an analogous statement for $r$-overconvergent forms where the action
is defined.

Note that if $\phi$ is a classical weight and $f \in \mathcal{M}^{\dagger}_{\phi}$
is classical then the action of $\glql$ we have constructed agrees with the
classical one.
\section{$p$-adic analytic families of Admissible Representations of $Gl_2(\Q_l)$}
Fix $N$ a positive integer prime to $p$.   Recall that for $f \in \mathcal{S}_k^{U_1(N)}$, a cuspidal eigenform, we defined $\pi_{f,l} = \mathbb{C}[\glql]f \subset
\mathcal{S}_k$.  We are now in a position to generalise this to overconvergent
cuspidal eigenforms in the sense of \cite{CME}.  Keeping the conventions
of \S3.4, the $A$-module $\mathcal{M}^{\dagger}_{\phi}(U_1(N)^p)$ is equipped
with a set of commuting $A$-linear endomorphisms, commonly known as the Hecke
algebra $\mathbb{T}$. These preserve the space of cusp forms. The operator $U_p$ is contained in this algebra and
for $r$ where it is defined it acts compactly on the Banach space  $\mathcal{M}_{\phi}(r,U_1(N)^p)$.
In fact $U_p$ acts on the whole of $\mathcal{M}^{\dagger}_{\phi}$.
Fix $f \in \mathcal{S}^{\dagger}_{\phi}(U_1(N)^p)$, a normalised cuspidal eigenform
with respect to $\mathbb{T}$, such that the eigenvalue of $f$ with respect
to $U_p$ is a unit in $A$. This last condition  is commonly known as being finite
slope.   By the work
of \cite{CME}and \cite{BMF} this is equivalent to a commutative
diagram of rigid
spaces 

$$\xymatrix{
Y \ar[rr] \ar[dr]^{\phi} &&\mathcal{E} \ar[dl]
\\
&\mathcal{W}
}
$$
where $\mathcal{E}$ is the reduced tame level $N$ cuspidal eigencurve as constructed in \cite{BE}.
\begin{defin}
For $ f \in \mathcal{S}^{\dagger}_{\phi}(U_1(N)^p)$, a finite slope, cuspidal eigenform,
define

$$ \pi_{f,l} := A[\glql]f \subset \mathcal{S}^{\dagger}_{\phi},$$
where this is the weight $\phi$ action as defined in \S3.4.

\end{defin}
If $\phi$ and $f$ are classical then this recovers the previous definition.
Note that this is a smooth representation in the obvious sense.  

\begin{lem}Let $a_p \in A^*$, $U \in \Sigma$, then  $$\mathcal{M}^{\dagger}_{\phi}(U^p)^{U_p=a_p}
= \mathcal{M}_{\phi}(r, U^p)^{U_p=a_p},$$ for some non-zero $r$, which
depends only on $\phi$.
\end{lem}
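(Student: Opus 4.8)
The plan is to show that a finite-slope overconvergent eigenform of weight $\phi$ is automatically overconvergent to some radius $p^{-r}$ with $r$ depending only on $\phi$. The key input is the classical compactness/spectral theory for $U_p$ on the Banach spaces $\mathcal{M}_{\phi}(r,U^p)$, together with the fact that the operators $U_p$ for different values of $r$ are compatible under the restriction maps. First I would recall that for the radii $r$ in the appropriate range the restriction map $\mathcal{M}_{\phi}(r,U^p)\hookrightarrow \mathcal{M}_{\phi}(r',U^p)$ for $r'<r$ is injective with dense image (it is restriction of rigid functions from a larger affinoid to a smaller one), and that $U_p$ commutes with these maps. Since $U_p$ acts compactly on each $\mathcal{M}_{\phi}(r,U^p)$ and the transition maps are compact with dense image, the Riesz theory gives a well-defined Fredholm determinant $\det(1-TU_p\mid \mathcal{M}_{\phi}(r,U^p))$ that is \emph{independent of $r$}, and for each unit eigenvalue $a_p\in A^*$ the generalized $a_p$-eigenspace $\mathcal{M}_{\phi}(r,U^p)^{U_p=a_p}$ has finite rank over $A$ and is independent of $r$ under these restriction maps.

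The core step is then: given $f\in\mathcal{M}^{\dagger}_{\phi}(U^p)^{U_p=a_p}$, by definition $f$ lies in $\mathcal{M}_{\phi}(r_0,U^p)$ for \emph{some} $r_0$, and $U_p f = a_p f$ there. But $U_p$ visibly improves overconvergence — this is the standard point that $U_p$, being built from the correspondence that divides by the canonical subgroup, maps $\mathcal{M}_{\phi}(r,U^p)$ into $\mathcal{M}_{\phi}(r',U^p)$ for a strictly larger radius $r'$ (i.e. smaller $p^{-r'}$, more overconvergent), as long as $r$ is in the admissible range. Hence $f=a_p^{-1}U_p f$ lies in a space of strictly better overconvergence, and iterating, $f$ lies in $\mathcal{M}_{\phi}(r,U^p)$ for \emph{every} $r$ in the fixed admissible range determined by $\phi$ (the range is governed by the bound $0\le V(E)<\tfrac{p^{2-n}}{p+1}$ with $n=1$, so it depends only on $\phi$, not on $f$). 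This shows $\mathcal{M}^{\dagger}_{\phi}(U^p)^{U_p=a_p}\subseteq\mathcal{M}_{\phi}(r,U^p)^{U_p=a_p}$ for that $r$; the reverse inclusion is the tautological one coming from $\mathcal{M}_{\phi}(r,U^p)\subset\mathcal{M}^{\dagger}_{\phi}(U^p)$, and the two eigenspaces coincide.

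The main obstacle I expect is making the ``$U_p$ improves overconvergence'' step precise in this adelic/geometric formulation: one must check that the Hecke correspondence defining $U_p$ restricts to a correspondence on the overconvergent loci $X(U^p\times\Gamma_1(\mathbf{q}))_{\geq p^{-r}}$ taking it, via the two projections, into the locus $X(U^p\times\Gamma_1(\mathbf{q}))_{\geq p^{-r'}}$ with $r'>r$ — equivalently, that the canonical subgroup of the quotient $E/C$ behaves correctly and that the valuation $V$ strictly decreases except on the ordinary locus. This is exactly the content of the classical results of Buzzard (\cite{BMF}) and Coleman (\cite{BMF}) on $U_p$ and overconvergence, and the cleanest route is to reduce to their statements: since the $U_p$-eigenspace is finite over $A$ and the construction is functorial in $A$, one can base-change to $\Cp$-points and invoke the classical analytic-continuation principle for finite-slope forms there, then note that the resulting radius $r$ (depending only on the slope bound implicit in ``$a_p\in A^*$'' and on $\phi$ through the weight) works uniformly. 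Everything else — injectivity and density of the transition maps, compatibility of $U_p$, descent of the finiteness statement — is routine.
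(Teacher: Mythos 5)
Your overall strategy --- bootstrap from the eigenform equation $f=a_p^{-1}U_p f$ using the fact that $U_p$ improves overconvergence --- is the same one the paper uses, but there is a genuine gap at exactly the point your plan glosses over: what $U_p$ actually \emph{is} in weight $\phi$. In this paper the weight-$\phi$ spaces $\mathcal{M}_{\phi}(r,U^p)$ are spaces of rigid functions on $Y\times X(U^p\times\Gamma_1(\mathbf{q}))_{\geq p^{-r}}$, and the operator $U_p$ is not the bare Hecke correspondence: it is the twisted operator $U_p(f)=U_0(e\cdot f)$, where $U_0$ is the weight-zero operator coming from the correspondence and $e$ is the Eisenstein-family twist with $q$-expansion $E_{\kappa}(q)/E_{\kappa}(q^p)$, pulled back to $Y$. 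Only $U_0$ is covered by the ``dividing by the canonical subgroup improves overconvergence'' result (Proposition 3.5 of Buzzard's analytic continuation paper, which is what the paper cites); the multiplication by $e$ does \emph{not} improve anything, and it caps the attainable radius at the radius on which $e$ is defined. That radius is where the dependence on $\phi$ in the statement comes from: $e$ lives on $\mathcal{B}\times X(\cdot)_{\geq 1}$ as an overconvergent function, and by compactness of the affinoid $Y$ one gets a single non-zero $r$ (depending only on $\phi$) such that $e$ is a function on $Y\times X(U^p\times\Gamma_0(\mathbf{q}))_{\geq p^{-r}}$; the identity $f=a_p^{-1}U_0(e\cdot f)$ then forces $f$ to be at least as overconvergent as $e$. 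Your version instead asserts that $U_p$ itself maps $\mathcal{M}_{\phi}(r,U^p)$ to a strictly larger radius throughout the whole canonical-subgroup range $0\le V(E)<p/(p+1)$, and concludes that $f$ is overconvergent on that entire range ``independent of $f$'' --- but that range is independent of $\phi$ as well, which is both stronger than the lemma and not true in general for non-integral weight: beyond the radius of $e$ the weight-$\phi$ operator is not even defined, precisely because of the twist.

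The proposed fallback --- specialise at $\mathbb{C}_p$-points and invoke the classical analytic-continuation results of Coleman/Buzzard, then claim the radius ``works uniformly'' --- does not close this gap: the lemma is a statement about a family over the affinoid $Y=Sp(A)$, and uniformity of $r$ over $Sp(A)$ is exactly what needs an argument; pointwise statements at each $\kappa\in Sp(A)(\mathbb{C}_p)$ do not glue back to membership of $f$ in $\mathcal{M}_{\phi}(r,U^p)=O(Y\times X(\cdot)_{\geq p^{-r}})^{\tau^i}$ without further work. The paper obtains the uniformity in one stroke by applying compactness of $Y$ to the single function $e$, before any eigenform enters. Two smaller points: the Riesz/Fredholm preliminaries (independence of the characteristic series of $r$, finiteness of eigenspaces) are not needed for this lemma and in any case presuppose that $U_p$ acts on $\mathcal{M}_{\phi}(r,U^p)$ for the relevant $r$, which is again only true for $r$ up to the radius of $e$; and the radius in the lemma does not depend on any slope bound extracted from $a_p\in A^*$, only on $\phi$, so the dependence you suggest at the end is spurious.
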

\begin{proof}This the statement that all overconvergent modular forms of
fixed weight which
are finite slope eigenforms for $U_p$ are $r$-overconvergent for some fixed
$r$.   Following \cite{BMF},
there is an overconvergent function $e$ on the space $\mathcal{B}\times X(\glzh^p
\times \Gamma_0(\mathbf{q}))_{\geq
1}$ which is used to define $U_p$.  The $q$-expansion at infinity is $e(q)=
E_{\kappa}(q)/ E_{\kappa}(q^p)$. Hence, given  $U \in \Sigma$, 
$e$ is naturally an overconvergent
function on $Y \times X(U^p \times \Gamma_0(\mathbf{q}))_{\geq1}$ which we also denote $e$.  By compactness there exist a non-zero $r$ such that $e$ is a
function on $Y \times X(U^p \times \Gamma_0(\mathbf{q}))_{\geq p^{-r}}$.
 Let $U_0$ be the
\textit{weight} 0 $U_p$ operator defined by a correspondence on the moduli
problem. By propostion 3.5 of \cite{BA}, the operator $U_0$  increases overconvergence. If $f \in \mathcal{M}^{\dagger}_{\phi}(U^p)$ then by definition 
$$ U_p(f) = U_0(e \cdot f).$$ 
Hence, we deduce that if $f \in \mathcal{M}^{\dagger}_{\phi}(U^p)^{U_p=a_p}$,
then $$ f = a_p^{-1}U_0(e \cdot f),$$
which means that $f$ must be at least as overconvergent as $e$.  Hence we
are done.
 
\end{proof}
\begin{lem}
Let $f \in \mathcal{S}^{\dagger}_{\phi}(U_1(N)^p)$ be an eigenform whose
eigenvalue at $p$ is $a_p \in A^*$. Then 
$$\pi_{f,l} \subset (\mathcal{S}^{\dagger}_{\phi})^{U_p=a_p}.$$
\end{lem}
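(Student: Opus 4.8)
The plan is to reduce the statement to the fact, established in the previous lemma, that an overconvergent eigenform with $U_p$-eigenvalue a unit is automatically $r$-overconvergent for some $r$ depending only on $\phi$, combined with the observation that the $\glql$-action we have defined commutes with $U_p$. First I would recall that by definition $\pi_{f,l} = A[\glql]f$, so it suffices to show that $g(f)$ lies in $(\mathcal{S}^{\dagger}_{\phi})^{U_p = a_p}$ for every $g \in \glql$; then the $A$-span of all such elements is automatically contained in the $U_p = a_p$ eigenspace, since that eigenspace is an $A$-submodule. So the crux is the single computation that $U_p(g(f)) = a_p \cdot g(f)$.

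The key step is therefore to verify that the weight $\phi$ action of $\glql$ commutes with the $U_p$ operator. This is where the moduli-theoretic and $q$-expansion machinery from \S3.3 and \S3.4 comes in: $U_p$ is built from the weight zero operator $U_0$ (defined by a correspondence on the moduli problem supported at $p$) together with the twist function $e$ whose $q$-expansion at infinity is $E_{\kappa}(q)/E_{\kappa}(q^p)$, while $g \in \glql$ acts through a correspondence supported at $l \neq p$ twisted by $e_{(g,\phi)}$. Since $l \neq p$, the weight zero correspondences defining $U_0$ and the action of $g$ commute (they act on disjoint parts of the level structure, exactly as in Coleman's construction of prime-to-$p$ Hecke operators). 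For the twist factors, one checks the identity at the level of $q$-expansions: both $e$ and $e_{(g,\phi)}$ have coefficients in $\Lambda_K$ (as recorded in the proof of Theorem 30 and Lemma 28), and the required compatibility — essentially that pulling back $e$ under the $g$-correspondence and pulling back $e_{(g,\phi)}$ under the $U_0$-correspondence give the same function — holds on the Zariski-dense set of classical integer weights $k \in \mathcal{B}(\Cp)$, where it reduces to the classical statement that $U_p$ and prime-to-$p$ Hecke operators commute on spaces of classical modular forms. Density then forces equality of $q$-expansions, hence of the functions themselves on each connected component.

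Granting this commutation, the argument closes quickly: for $g \in \glql$ we have $U_p(g(f)) = g(U_p(f)) = g(a_p f) = a_p\, g(f)$, using $A$-linearity of the $\glql$-action in the last step. Hence $g(f) \in (\mathcal{S}^{\dagger}_{\phi})^{U_p = a_p}$ for all $g$, and taking $A$-linear combinations we conclude $\pi_{f,l} = A[\glql]f \subset (\mathcal{S}^{\dagger}_{\phi})^{U_p = a_p}$. (One should also note that this eigenspace is indeed contained in $\mathcal{S}^{\dagger}_{\phi}$ and is preserved by the $\glql$-action, so the containment makes sense; cuspidality of $g(f)$ follows since the weight $\phi$ action preserves $\mathcal{S}^{\dagger}_{\phi}$ as remarked after Proposition 29.)

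The main obstacle I anticipate is the commutation of the two twist factors: unlike the underlying weight zero correspondences, where disjointness of support at $l$ and $p$ makes commutativity geometrically transparent, the functions $e$ and $e_{(g,\phi)}$ are genuinely analytic objects over $\mathcal{B}$, and the cleanest route to their compatibility is the $q$-expansion/Zariski-density argument rather than a direct manipulation — so care is needed to ensure the relevant classical points lie in the common domain of overconvergence (which is why the "$r$ depending only on $\phi$" in the preceding lemma is exactly what one wants).
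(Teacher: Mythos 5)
Your overall strategy coincides with the paper's: reduce the lemma to the commutation $U_p\circ g = g\circ U_p$ on $\mathcal{S}^{\dagger}_{\phi}$ (after which $\pi_{f,l}=A[\glql]f$ lands in the $U_p=a_p$ eigenspace), split $U_p = U_0(e\cdot\;)$ and $g = e_{(g,\phi)}\cdot g^*$ into weight-zero correspondences, which commute because $l\neq p$, plus twist factors, and then check a compatibility of the twists. Where you diverge is in how that compatibility is verified, and here your formulation needs repair. The identity you state --- that the pullback of $e$ under the $g$-correspondence and the pullback of $e_{(g,\phi)}$ under the $U_0$-correspondence ``give the same function'' --- is not the right one and already fails at integer weights: at weight $k$ the former has $q$-expansion $g^*E_{(k,0)}(q)/g^*E_{(k,0)}(q^p)$ while the latter is $g^*E_{(k,0)}(q^p)/E_{(k,0)}(q^p)$. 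What is actually needed, and what the paper uses, is Coleman's projection formula $F(q)\,U_0(G) = U_0(F(q^p)\,G)$, which moves the external factor in $g(U_p(h)) = e_{(g,\phi)}\,U_0(g^*(e)\,g^*(h))$ inside $U_0$, followed by the $q$-expansion identity $e(q)\,e_{(g,\phi)}(q) = e_{(g,\phi)}(q^p)\,g^*(e)(q)$, which the paper checks directly from the explicit expansions (immediate, since everything is a ratio of specialisations of the Eisenstein family). Your density-at-integer-weights argument is a legitimate alternative way to verify that last identity --- it is exactly how the paper proves $e_h\cdot h^*(e_g)=e_{hg}$ in Proposition 10 --- or one could run density on the full operator identity over affinoid subdomains of $\mathcal{B}$ and then transport it to a general weight $\phi: Y\rightarrow\mathcal{W}$ via the functoriality in Proposition 10; note that integer points need not be Zariski dense in $Y$ itself, so that pullback step is not optional. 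But without either the projection formula or that reformulation, ``compatibility of the twist factors'' alone does not let you compare $U_0(e\cdot e_{(g,\phi)}\cdot g^*(h))$ with $e_{(g,\phi)}\,U_0(g^*(e)\,g^*(h))$, so the precise identity should be stated before the density argument is invoked.
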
 
\begin{proof}This statement will follow by showing that the action $\glql$ on $\mathcal{S}^{\dagger}_{\phi}$
commutes with the action of $U_p$. 

 Fix $g \in \glql$ and 
$V, U \in \Sigma$ such that $V^p \subset g U^p g^{-1}$ and both $e$ and $e_{(g,
\phi)}$ are functions on $Y \times X(V^p \times \Gamma_0(\mathbf{q}))_{\geq p^{-r}}$ for some non-zero $r$.  We will compare $U_p(g(f))$ and $g(U_p(f))$.
$$U_p(g(h)) = U_p(e_{(g,
\phi)}g^*(h))=U_0(e\cdot e_{(g,
\phi)}\cdot g^*(h)).$$
Conversely, 
$$g(U_p(h)) = e_{(g,\phi)} g^*(U_0(e\cdot h))= e_{(g,\phi)}U_0(g^*(e)\cdot
g^*(h)).$$
The second equality comes from the fact that  $U_0$ and $g^*$ are derived
from commuting actions on the moduli problem. A standard result of Coleman tells us that
$$e_{(g,\phi)}(q)U_0(g^*(e)\cdot
g^*(h))=U_0(e_{(g,\phi)}(q^p)g^*(e)\cdot g^*(h))$$
By considering $q$-expansions at any cusp we see that $e(q)\cdot e_{(g,\phi)}(q) = e_{(g,\phi)}(q^p)g^*(e)(q)$.
We deduce that the action of $\glql$ commutes with $U_p$.
\end{proof}
Fix $U \vartriangleleft \glzl$, an open compact subgroup such that  $U_1(N)^{l} \times U \subset U_1(N)$. Note that  $\pi_{f,l}^U
\neq 0$. 

\begin{prop} $\pi_{f,l}^U$ is a finitely generated $A$-module.
\end{prop}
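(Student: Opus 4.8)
The plan is to trap $\pi_{f,l}^{U}$ inside a single Banach $A$-module on which $U_p$ acts compactly, and then to apply the module-theoretic form of the classical fact that the $\lambda$-eigenspace of a compact operator is finite dimensional for $\lambda\neq 0$; since the affinoid algebra $A$ is Noetherian it is then enough to exhibit $\pi_{f,l}^{U}$ as a submodule of some finitely generated $A$-module. The thing to keep in mind is that a priori $\pi_{f,l}=A[\glql]f$ is a union of $\glql$-translates of $f$ living at unboundedly many tame levels at $l$ and at unbounded overconvergence, so the content of the statement is precisely that imposing $U$-invariance together with finite slope at $p$ forces everything into one fixed space.

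First I would pin down the tame level. The eigenform $f$ is fixed by $U_1(N)^{p}$, hence by the product $U_1(N)^{lp}$ of its components away from $l$ and $p$. For $g\in\glql$ the weight $\phi$ translate $g(f)=e_{(g,\phi)}\cdot g^{*}(f)$ only modifies the level structure at $l$, and $g$ has trivial component away from $l$, hence commutes inside $\glafp$ with $U_1(N)^{lp}$; so every element of $\pi_{f,l}$ is still fixed by $U_1(N)^{lp}$. Therefore $\pi_{f,l}^{U}$ is fixed by $W:=U_1(N)^{lp}\times U$, and as $U\subset U_1(N)_{l}$ we have $W\subset U_1(N)^{p}$ (so $W$ is an admissible tame level, after possibly shrinking $U$). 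Using the identification $(\mathcal{S}^{\dagger}_{\phi})^{W}=\mathcal{S}^{\dagger}_{\phi}(W)$ from \S3.4 this gives $\pi_{f,l}^{U}\subset\mathcal{S}^{\dagger}_{\phi}(W)$.

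Next I would bring in the slope condition. By the preceding lemma $\pi_{f,l}\subset(\mathcal{S}^{\dagger}_{\phi})^{U_p=a_p}$, and by the lemma before it, applied to cusp forms at tame level $W$, there is a non-zero $r$ depending only on $\phi$ with $\mathcal{S}^{\dagger}_{\phi}(W)^{U_p=a_p}=\mathcal{S}_{\phi}(r,W)^{U_p=a_p}$. Combining, $\pi_{f,l}^{U}\subset M^{U_p=a_p}$, where $M:=\mathcal{S}_{\phi}(r,W)$ is the Banach $A$-module of $r$-overconvergent cusp forms of tame level $W$; by \cite{BMF} this is a Banach $A$-module satisfying Coleman's property (Pr) and $U_p$ acts compactly on it. It remains only to show $M^{U_p=a_p}$ is a finitely generated $A$-module.

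For the last step, using that $U_p$ is compact, write $U_p=F+G$ with $F$ of finite rank — say $F(M)\subset M_{0}$ for a finite free $A$-submodule $M_{0}\subset M$ — and $G$ of small operator norm, "small enough" meaning $\|a_p^{-1}G\|<1$; this makes sense since $a_p\in A^{*}$ acts on $M$ as a bounded invertible $A$-linear operator. Then $1-a_p^{-1}G$ is invertible on $M$, and for $v$ with $U_p v=a_p v$ the identity $(1-a_p^{-1}G)v=a_p^{-1}Fv$ shows $v\in(1-a_p^{-1}G)^{-1}(a_p^{-1}M_{0})$, a finitely generated $A$-submodule of $M$. Hence $M^{U_p=a_p}$, and a fortiori $\pi_{f,l}^{U}$, is a submodule of a finitely generated module over the Noetherian ring $A$, so is itself finitely generated. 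The only genuinely external input is that $\mathcal{S}_{\phi}(r,W)$ really is a (Pr) Banach $A$-module with $U_p$ compact on it, which is exactly the setup of \cite{BMF} and \cite{BE}; granting that, the argument above is just the standard Riesz-theory statement that $\ker(u-\lambda)$ is finite for $u$ compact and $\lambda$ a unit, transported to Banach modules.
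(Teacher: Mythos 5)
Your argument is correct and follows essentially the same route as the paper: using Lemmas 10 and 11 together with the fact that the $\glql$-action only changes the level structure at $l$, you trap $\pi_{f,l}^U$ inside the slope-$a_p$ eigenspace of $U_p$ acting on a single Banach $A$-module $\mathcal{S}_{\phi}(r,W)$ of fixed tame level and fixed overconvergence, and then conclude by compactness of $U_p$ and Noetherianity of $A$. The only difference is that you spell out the Riesz-type finite-rank approximation argument showing the eigenspace sits inside a finitely generated submodule, a step the paper simply asserts as a consequence of compactness.
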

\begin{proof} By lemma 10 and lemma 11, we know that $ \pi_{f,l} \subset
\mathcal{S}_{\phi}(r)^{U_p=a_p}$ for some non-zero $r$. In fact $$\pi_{f,l}
\subset \varinjlim_{V \subset \glzl} O(Y \times X(U_1(N)^{l,p} \times V \times
\Gamma_1(\mathbf{q})))_{\geq p^{-r}}.$$ where the limit is taken over appropriate
open compact subgroups. This is because $\glql$ only affects the moduli problem
at $l$. Hence for $V \in \Sigma$ such that
$V^p \subset U_1(N)^{l,p} \times U$, 
$$\pi_{f,l}^U \subset \mathcal{S}_{\phi}(r, V^p)^{U_p=a_p}.$$
Because $U_p$ acts compactly on this space, we know that the right hand side
must be a finite $A$-module.  Affinoid algebras are noetherian, hence $\pi_{f,l}^U$
is a finite $A$-module.

\end{proof}

We deduce that if $A=\Cp$ then $\pi_{f,l}$ is a smooth, admissible $\glql$-module
in the classical sense.
\\ \\
Fix $V \in \Sigma$ such that
$V^p \subset U_1(N)^{l,p} \times U$. Hence
$$\pi_{f,l}^U \subset \mathcal{S}_{\phi}(r, V^p)^{U_p=a_p}.$$   
The space $\mathcal{S}_{\phi}(r, V^p)$ is a potentially $ON$-able Banach space
(using the terminology of \cite{BE}) which compact
operator $U_p$.  If $P_{\phi}(T)$ is the characteristic power series of $U_p$
acting on $\mathcal{S}_{\phi}(r, V^p)$ then it naturally has $a_p^{-1}$ as
a root.  This data determines a closed embedding $\theta$ of $Sp(A)$ into the spectral
curve associated to $U_p$ acting on $\mathcal{M}_{\phi}(r, V^p)$. As in \cite{BE},
there is an admissible cover $\mathcal{U}$ of the spectral curve upon which we may construct
an eigenvariety by appropriately gluing together Hecke algebras.  Hence we
may admissibly cover $Sp(A)$ by affinoids such that each affinoid is contained
in some element of $\theta^{-1}(\mathcal{U})$.  Let us replace $A$ by an element
of this cover.  By definition of $\mathcal{U}$ we can find $Q(T) \in A[T]$ and $S(T) \in A[[T]]$ such that
$Q(T)$ and $S(T)$ are coprime,  $P_{\phi}(T)=Q(T)S(T)$ and $a_p^{-1}$ is
a root of $Q(T)$. We can also guarantee that the leading term of $Q(T)$ is
a unit.  By theorem 3.3 of \cite{BE},  $\mathcal{S}_{\phi}(r, V^p)$ decomposes
as the closed direct sum $\mathcal{S}_{\phi}(r, V^p)= N \oplus F$, where
$N$ is a finite, projective $A$-module.
$Q^*(U_p)$ acts invertibly on $F$ and as zero on $N$. We deduce that 
$$N = \mathcal{S}_{\phi}(r, V^p)^{Q^*(U_p)=0}.$$ But $a_p$ is a root
of $Q^*(T)$, hence $$\mathcal{S}_{\phi}(r, V^p)^{U_p=a_p} \subset N.$$ 
Let $\widetilde{\mathcal{E}}$ be a normalisation of $\mathcal{E}$
(over $\Cp$).  By the above we may admissibly cover $\widetilde{\mathcal{E}}$
by open affinoids $\phi:Sp(A)\subset \widetilde{\mathcal{E}}$ such that $A$
is a Dedekind domain and 
if $f$ is the eigenform coresponding to $\phi$ then 
$$\pi_{f,l}^U \subset \mathcal{S}_{\phi}(r, V^p)^{U_p=a_p}$$   
with
$$\mathcal{S}_{\phi}(r, V^p)= N \oplus F,$$where
$N$ is a finite, projective $A$-module upon which $U_p-a_p$ acts as zero.
Furthermore $U_p-a_p$ is never zero on $F$. Let us fix such a cover $\mathcal{V}$.
\\ \\
Now restrict to the case where $(Y, \phi) \in \mathcal{V}$. Note that $Y$ is an irreducible, smooth, connected, one dimensional
rigid space over $\Cp$.  
\begin{lem} $\pi_{f,l}^U$ is a finite, projective $A$-module.
\end{lem}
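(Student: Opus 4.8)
The plan is to leverage the very special form of the affinoid cover $\mathcal{V}$ that was just set up: since $(Y,\phi)\in\mathcal{V}$, the ring $A$ is a Dedekind domain. Over a Dedekind domain a module is finitely generated projective if and only if it is finitely generated and torsion-free (by the structure theorem for finitely generated modules over a Dedekind domain, a finitely generated torsion-free module splits as $A^{n-1}\oplus I$ for a fractional ideal $I$, hence is projective). So I would reduce the statement to checking these two properties for $\pi_{f,l}^U$.

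Finite generation is already available: Proposition 12 states precisely that $\pi_{f,l}^U$ is a finitely generated $A$-module. (Alternatively, once the inclusion below is in place, this is automatic from the fact that $A$ is Noetherian and $\pi_{f,l}^U$ is a submodule of the finite $A$-module $N$.) For torsion-freeness, I would invoke Lemma 11 together with the construction of $\mathcal{V}$: these give $\pi_{f,l}^U\subset\mathcal{S}_{\phi}(r,V^p)^{U_p=a_p}\subset N$, where $N$ is a finite \emph{projective} $A$-module, being the direct summand on which $U_p-a_p$ vanishes in the decomposition $\mathcal{S}_{\phi}(r,V^p)=N\oplus F$. A projective module over an integral domain is a direct summand of a free module and hence torsion-free, and any submodule of a torsion-free module is torsion-free; therefore $\pi_{f,l}^U$ is torsion-free over $A$.

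Combining the two observations, $\pi_{f,l}^U$ is a finitely generated torsion-free module over the Dedekind domain $A$, hence finite projective, which is the claim. I do not expect a genuine obstacle here: all the substantive input — the finite generation of $\pi_{f,l}^U$, the commutation of the $\glql$-action with $U_p$ that places $\pi_{f,l}^U$ inside the $U_p=a_p$ eigenspace, and the orthogonal decomposition $\mathcal{S}_{\phi}(r,V^p)=N\oplus F$ with $N$ finite projective — has already been established in Lemmas 10 and 11 and Proposition 12 and in the choice of the cover $\mathcal{V}$. The only care required is bookkeeping: tracking the chain of inclusions $\pi_{f,l}^U\subset\mathcal{S}_{\phi}(r,V^p)^{U_p=a_p}\subset N$ at the right tame level $V^p$ and recalling that "Dedekind domain'' is part of the definition of the cover $\mathcal{V}$.
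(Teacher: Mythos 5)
Your proposal is correct and follows essentially the same route as the paper's own proof: both use the chain of inclusions $\pi_{f,l}^U\subset\mathcal{S}_{\phi}(r,V^p)^{U_p=a_p}\subset N$ with $N$ finite projective from the construction of $\mathcal{V}$, and the fact that over the Dedekind domain $A$ a finitely generated torsion-free (equivalently, a submodule of a finite projective) module is projective. The paper merely adds an optional alternative check of torsion-freeness via $q$-expansions, which your argument does not need.
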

\begin{proof}$A$ being a Dedekind domain means that any finite $A$-module
is projective if and only if it
is torsion free.  Hence any submodule of a finite, projective $A$-module is
projective.  By construction
$$\pi_{f,l}^U \subset \mathcal{S}_{\phi}(r, V^p)^{U_p=a_p}.$$ 
By definition of $\mathcal{V}$ we know that the right hand side is contained
in $N$ which is finite and projective. Hence 

$$\pi_{f,l}^U \subset N,$$
and we are done.
Another way to see this is that the right hand side is a finitely generated $A$-module and each element in
it is determined by its $q$-expansions.  A function is zero if and only if
its $q$-expansions (which have coefficients in an integral domain)are all zero.  Hence, $\mathcal{S}_{\phi}(r, V^p)$ is
torsion free.

\end{proof}

Let $(Y, \phi) \in \mathcal{V}$.  Lemma 14  implies that any element of $End_A(\pi_{f,l}^U)$ has a trace in
$A$, which extends the usual definition for free modules. This trace map
is functorial in the following sense:  

Let $\kappa: A \rightarrow B$ be a
map of affinoid algebras over $\Cp$.  Then $\pi_{f,l}^U \otimes_{\kappa}B$
is a projective $B$-module.  $\Psi \in End_A(\pi_{f,l}^U)$ then $\Psi\otimes
1 \in End_B(\pi_{f,l}^U\otimes_{\kappa}B)$ and $tr(\Psi\otimes 1) = \kappa(tr(\Psi))$.

Now specialise to the case where $B = \Cp$.  In this situation $\kappa
\in Sp(A)$ so is given by a maximal ideal $m_{\kappa} \subset A$.  This will correspond to $\kappa
\in \mathcal{E}(\Cp)$.  By the functoriality (see proposition 10) of our construction there is
a $\glql$-equivarient morphisms of $\Cp$ vector spaces
$$ \xymatrix{ 
\mathcal{S}^{\dagger}_{\phi} \ar[r]^{\lambda} &\mathcal{S}^{\dagger}_{\phi \circ \kappa}
}.
$$   
This map preserves $r$-overconvergence.  We will be interested in the kernel
of this map.  
\begin{lem} $ker(\lambda)
= m_{\kappa} \cdot  \mathcal{S}^{\dagger}_{\phi}$.
\end{lem}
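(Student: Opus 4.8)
The plan is to identify $\mathcal{S}^{\dagger}_{\phi}$ with global sections of a sheaf over a product of rigid spaces involving $Y = Sp(A)$, and to realise $\lambda$ as the pullback along the closed immersion $Sp(\Cp) \hookrightarrow Y$ determined by $m_\kappa$, tensored with the identity on the modular curve factor. Concretely, at a fixed tame level $V^p$ and radius $r$ we have $\mathcal{S}_{\phi}(r,V^p) \subset O(Y \times X(V^p\times\Gamma_1(\mathbf{q}))_{\geq p^{-r}})$, and the map $\lambda$ restricted to this level is induced by $(\kappa \times 1)^*$ as in Proposition 10. So the first step is to reduce, by taking the direct limit over $V$ and over $r$, to proving the statement at a single fixed level, where everything is a statement about a concrete affinoid (or a nested union of affinoids).

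Next I would pin down the relevant algebra. Writing $X(V^p\times\Gamma_1(\mathbf{q}))_{\geq p^{-r}} = Sp(B)$ as an affinoid over $\Cp$, the cuspidal subspace $\mathcal{S}_{\phi}(r,V^p)$ sits inside $A\hat{\otimes}_{\Cp} B$ (cut out by the character condition on the diamond operators at $p$, which is $A$-linear), and $\lambda$ is the map induced by $A \hat{\otimes} B \to (A/m_\kappa)\hat{\otimes} B = B$. The kernel of $A\hat{\otimes} B \to (A/m_\kappa)\hat{\otimes} B$ is exactly $m_\kappa \cdot (A\hat{\otimes}B)$: here one uses that $B$ is a flat (indeed, potentially $ON$-able, hence faithfully flat after a suitable completion) $\Cp$-module, so that tensoring the exact sequence $0 \to m_\kappa \to A \to \Cp \to 0$ with $B$ and completing stays exact. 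Then I would intersect with the closed subspace $\mathcal{S}_{\phi}(r,V^p)$: since the diamond-character condition is $A$-linear, $ker(\lambda)\cap \mathcal{S}_{\phi}(r,V^p) = m_\kappa\cdot\mathcal{S}_{\phi}(r,V^p)$, using that $\mathcal{S}_{\phi}(r,V^p)$ is a direct summand of $A\hat{\otimes}B$ as an $A$-module (it is $N\oplus F$ compatibly, or more simply it is $A$-linearly split off by the diamond projector), so that $m_\kappa$ meets it in the expected submodule.

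The containment $m_\kappa\cdot\mathcal{S}^{\dagger}_{\phi}\subset ker(\lambda)$ is immediate since $\lambda$ is $A$-linear and kills $m_\kappa$. The substantive direction is the reverse inclusion, and the main obstacle is the interaction of the completed tensor product with the ideal $m_\kappa$: one must check that an element of $A\hat{\otimes}B$ (a convergent power series, via $q$-expansions) that vanishes after specialising the $A$-variable at $\kappa$ actually lies in $m_\kappa(A\hat{\otimes}B)$ and not merely in its closure. Here I would use that $A$ is a Dedekind domain and $m_\kappa$ is generated by a single element $\varpi$ (a uniformiser at $\kappa$, after shrinking $Y$), so that $m_\kappa(A\hat{\otimes}B) = \varpi(A\hat{\otimes}B)$ is already closed (being the image of an injective, hence by Banach's open mapping theorem closed-image, continuous $A$-linear map $A\hat{\otimes}B \xrightarrow{\varpi} A\hat{\otimes}B$, using torsion-freeness of $B$ over $\Cp$ and of $A$). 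Equivalently, one checks on $q$-expansions: a $q$-expansion with coefficients in $A$ that vanishes modulo $m_\kappa$ coefficient-by-coefficient has all coefficients divisible by $\varpi$ in the Dedekind domain $A$, and dividing through by $\varpi$ stays in the (closed, hence complete) space because the norms only improve. This last point — that division by $\varpi$ preserves overconvergence of a fixed radius — is exactly where one invokes that $\mathcal{S}_{\phi}(r,V^p)$ is a finite projective $A$-module on the pieces in the cover $\mathcal{V}$ (Lemma 14), so the argument is really finite-dimensional linear algebra over $A$ once restricted there; then pass back to the limit.
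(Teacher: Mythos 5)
Your overall strategy coincides with the paper's: reduce to a fixed tame level $V^p$ and radius $r$, identify $\lambda$ there with specialisation of the $A$-variable, $A\hat{\otimes}B \rightarrow B$, prove that the kernel of this specialisation is exactly $m_{\kappa}\cdot(A\hat{\otimes}B)$, and then pass to the subspace $\mathcal{S}_{\phi}(r,V^p)$ and to the limit. The paper carries out the key step by choosing presentations $A \cong \Cp\langle T_1,\dots,T_n\rangle/I$ and $B \cong \Cp\langle S_1,\dots,S_m\rangle/J$, lifting $f$ to the Tate algebra, subtracting the lift of its evaluation at $\kappa$, and reading off membership in $m_{\kappa}\cdot\Cp\langle T_1,\dots,T_n,S_1,\dots,S_m\rangle$. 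Your observation that one must then intersect $m_{\kappa}\cdot(A\hat{\otimes}B)$ with the subspace cut out $A$-linearly by the diamond character (and cuspidality), using that this subspace is an $A$-module direct summand, is a correct and worthwhile point which the paper passes over in silence.

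However, the justifications you offer for the crucial closedness step are flawed. The open mapping theorem does not say that an injective continuous map of Banach spaces has closed image (dense proper inclusions are the standard counterexample); to conclude that multiplication by $\varpi$ has closed image you would need it to be bounded below, which is essentially what is in question. The correct standard input is that every ideal of an affinoid algebra is closed (so the finitely generated ideal $m_{\kappa}\cdot(A\hat{\otimes}B)$ is closed without any principality assumption), or else the paper's Tate-algebra lifting argument. Likewise, ``dividing through by $\varpi$ \dots because the norms only improve'' is false: if $|\varpi|<1$ division by $\varpi$ increases norms; what is true, and suffices, is a uniform bound $|a/\varpi|\leq C|a|$, obtained from the open mapping theorem applied to the bijection $A \rightarrow \varpi A$ only after $\varpi A$ is known to be closed. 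The reduction to $m_{\kappa}$ principal ``after shrinking $Y$'' is also not innocent: shrinking $Y$ replaces $A$ and hence the module $\mathcal{S}^{\dagger}_{\phi}$ appearing in the statement, so it requires an unstated compatibility, and it is unnecessary once one uses closedness of the ideal. Finally, Lemma 14 asserts that $\pi_{f,l}^U$ is a finite projective $A$-module, not that $\mathcal{S}_{\phi}(r,V^p)$ is; the latter is an infinite-dimensional Banach module of which only the slope factor $N$ is finite projective, and the lemma to be proved concerns all of $\mathcal{S}^{\dagger}_{\phi}$, so your closing appeal to ``finite-dimensional linear algebra over $A$'' cannot be used to finish the argument. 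With these repairs (closed ideals in affinoid algebras in place of the open mapping theorem and the finiteness claim), your argument becomes a valid variant of the paper's proof.
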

\begin{proof}Let $U\in \Sigma$ and $f \in \mathcal{S}_{\phi}(r, U^p)$.  $X(U^p
\times \Gamma_1(\mathbf{q}))_{\geq p^{-r}}$ is an affinoid over $\Cp$ whose associated affinoid
algebra we denote as $B$.  Hence $f \in A \hat{\otimes} B$.  
Let us choose a presentation of $A$ and $B$ in terms of Tate algebras.  By
this we mean isomorphisms
$$A \cong \Cp\langle T_1,...T_n\rangle / I, \; B \cong \Cp \langle S_1,...S_m\rangle
/ J,$$
for some $n, m \in \mathbb{N}$. Hence,
$$A \hat{\otimes} B \cong \Cp\langle T_1,...T_n, S_1,...S_m \rangle / (I,
J).$$
Let $f_{\kappa}$ denote
the image of $f$ in $\mathcal{S}_{\phi \circ \kappa}(r, U^p)$.  By defintion
$f_{\kappa} \in B$ and is formed by evaluating $T_1,...T_n$ at $\kappa$.
Now assume that $f \in ker (\lambda)$.
Let $$f' \in\Cp\langle T_1,...T_n, S_1,...S_m \rangle$$ be a lift of $f$,
then evaluating at $\kappa$ gives
$$f'_{\kappa} \in J \subset \Cp \langle S_1,...S_m\rangle,$$
which is a lift of $f_{\kappa}$.  Now $f' - f'_{\kappa}$ is also a lift of
$f$, whose evaluation at $\kappa$ is zero.  Hence $$f' - f'_{\kappa} \in m_{\kappa}\cdot
\Cp\langle T_1,...T_n, S_1,...S_m \rangle.$$  From this we deduce that $f
\in m_{\kappa}\cdot \mathcal{S}_{\phi \circ \kappa}(r, U^p)$ and that $$ker(\lambda)
\subset m_{\kappa}\cdot \mathcal{S}^{\dagger}_{\phi}.$$  The reverse inclusion is obvious,
hence we are done.
\end{proof}
Let $f_{\kappa}$ denote
the image of $f$ in $\mathcal{S}^{\dagger}_{\phi \circ \kappa}$.  Clearly
$f_{\kappa} \in \mathcal{S}^{\dagger}_{\phi\circ \kappa}(U_1(N)^p)$ is a
finite slope, cuspidal eigenform and so
we naturally have the $\glql$-module \mbox{$\pi_{f_{\kappa},l} \subset \mathcal{S}^{\dagger}_{\phi\circ\kappa}$.} Because $\kappa$
is surjective there is a surjection of $\glql$-modules
$$\xymatrix{ \pi_{f,l} \ar[r]^{\lambda} &\pi_{f_{\kappa},l}
}
$$
\begin{lem}The natural map
$$\xymatrix{ \pi_{f,l}^U \ar[r]^{\lambda} &\pi_{f_{\kappa},l}^U
}
$$
is a surjection.
\end{lem}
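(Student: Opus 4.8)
The plan is to reduce the statement to the surjectivity of $\lambda$ at the level of the full (not $U$-invariant) spaces, which we already have, together with the exactness properties of taking $U$-invariants for smooth representations of $\glql$ over a field of characteristic zero. The key point is that for any open compact $U$ the functor $V \mapsto V^U$ is exact on smooth $\Cp[\glql]$-modules, since $V^U$ is cut out by the idempotent $e_U$ in the Hecke algebra (after fixing the Haar measure as in the introduction), and applying an idempotent is an exact operation. So if I have a surjection $\pi_{f,l} \twoheadrightarrow \pi_{f_\kappa,l}$ of smooth $\glql$-modules, applying $e_U$ gives a surjection $\pi_{f,l}^U \twoheadrightarrow \pi_{f_\kappa,l}^U$ immediately. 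However, $\pi_{f,l}$ here is an $A[\glql]$-module rather than a $\Cp[\glql]$-module, so I need to be slightly careful: the relevant fact is that $e_U$ is an $A$-linear endomorphism of $\pi_{f,l}$ (it commutes with the $\glql$-action trivially on a smooth module, and it commutes with the $A$-action since the $A$-module structure is the one coming from coefficients), and idempotent cutting is still exact over any ring. Thus the surjection $\lambda\colon \pi_{f,l}\to\pi_{f_\kappa,l}$ of $A$-modules (it is $\Cp$-linear, but also $A$-linear when $\pi_{f_\kappa,l}$ is viewed as an $A$-module via $\kappa\colon A\to\Cp$) descends to a surjection on $U$-invariants.

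Concretely, the steps I would carry out are as follows. First, fix the Haar measure and recall that $e_U = \mathrm{vol}(U)^{-1}\mathbf{1}_U$ acts on any smooth $\glql$-module $V$ as a projector onto $V^U$; this is standard and uses only $l\neq p$ irrelevantly — it works for any totally disconnected group. Second, observe that on both $\pi_{f,l}$ and $\pi_{f_\kappa,l}$ the actions of $\glql$ are the weight $\phi$ (resp.\ weight $\phi\circ\kappa$) actions of \S3.4, which are smooth by construction (noted just after Definition~9), so $e_U$ is defined on each and projects onto the $U$-invariants. Third, verify that $\lambda$ intertwines the two $\glql$-actions — this is exactly the $\glql$-equivariance of $\lambda$ recorded just before Lemma~16, which in turn comes from the functoriality identity $(\pi\times 1)^*(e_{(g,\phi)}) = e_{(g,\phi\circ\pi)}$ of Proposition~10. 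Hence $\lambda\circ e_U = e_U\circ\lambda$, so $\lambda$ restricts to a map $\pi_{f,l}^U = e_U\pi_{f,l}\to e_U\pi_{f_\kappa,l} = \pi_{f_\kappa,l}^U$. Fourth, given $w\in\pi_{f_\kappa,l}^U$, use the surjectivity of $\lambda\colon\pi_{f,l}\to\pi_{f_\kappa,l}$ to pick $v\in\pi_{f,l}$ with $\lambda(v)=w$; then $\lambda(e_Uv) = e_U\lambda(v) = e_Uw = w$, and $e_Uv\in\pi_{f,l}^U$, proving surjectivity.

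I do not expect any serious obstacle here; the statement is essentially a formal consequence of the exactness of $e_U$-projection combined with the already-established surjectivity of $\lambda$ in Lemma~16. The one point that deserves a line of care is the compatibility of the $A$-module and $\glql$-module structures — i.e.\ that $e_U$, a priori an operator coming from the group action, is $A$-linear — but this is immediate because on a smooth module $e_U$ is given by integrating the group action against a fixed locally constant function, and the $\glql$-action is $A$-linear (it is defined by pullback along morphisms of rigid spaces over $\Qp$ and multiplication by the rigid functions $e_{(g,\phi)}$, all $A$-linear). An alternative, more hands-on route avoiding even the mention of $e_U$: since $\pi_{f_\kappa,l}^U$ is spanned over $\Cp$ by finitely many $q$-expansions and $\lambda$ preserves $q$-expansions (it is evaluation of coefficients at $\kappa$), one can lift generators directly and average over $U/(U\cap\text{stabiliser})$, but the idempotent argument is cleaner and I would present that.
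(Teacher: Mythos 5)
Your argument is correct, but it is not the route the paper takes. The paper proves the lemma by homological means: it forms the short exact sequence $0 \to \ker(\lambda) \to \pi_{f,l} \to \pi_{f_{\kappa},l} \to 0$ of smooth $\Cp[U]$-modules, takes continuous cohomology of the profinite group $U$, and kills the obstruction term $H^1_{con}(U,\ker(\lambda)) = \varinjlim_V H^1(U/V,\ker(\lambda)^V)$ by citing Atiyah--Wall: each $H^1(U/V,\ker(\lambda)^V)$ is annihilated by $|U/V|$, yet is a $\Cp$-vector space, hence vanishes. You instead argue directly that taking $U$-invariants is exact on smooth representations in characteristic zero, via the projector $e_U$ (equivalently, finite averaging over $U$ modulo an open stabiliser), and then lift any $w \in \pi_{f_{\kappa},l}^U$ to $e_U v$ where $\lambda(v)=w$, using the $\glql$-equivariance of $\lambda$ established just before the lemma. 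The two proofs rest on the same fact --- indices $[U:V]$ are invertible in $\Cp$ --- but yours is the more elementary and self-contained packaging, avoiding continuous cohomology altogether; note only that the Haar measure and $\mathcal{H}_U$ are introduced in the paper slightly after this lemma, so if you want the idempotent phrasing you should either fix the measure first or fall back on the bare averaging formulation you mention at the end, which needs no measure at all. What the paper's cohomological formulation buys is a template it reuses verbatim later (``the usual group cohomology argument'' in Theorems 22, 24 and 26), whereas your projector argument would serve equally well in those places; the side discussion of $A$-linearity of $e_U$ in your write-up is harmless but not needed, since the statement only asserts surjectivity of a map of $\Cp$-vector spaces.
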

\begin{proof} 
There is a short exact sequence of $\Cp[U]$-modules:

$$\xymatrix{
0 \ar[r] &ker(\lambda) \ar[r] &\pi_{f,l} \ar[r]^{\lambda} &\pi_{f_{\kappa},l}
\ar[r]&0
}.$$
U is a profinite group so we can take continuous group cohomology to give
the long exact sequence:
$$\xymatrix{
0 \ar[r] & ker(\lambda)^U \ar[r] & \pi_{f,l}^U \ar[r]^{\lambda} & \pi_{f_{\kappa},l}^U
\ar[r]&  H_{con}^1(U, ker(\lambda)) \ar[r] &...
}$$
By definition $$H_{con}^1(U, ker(\lambda))= \varinjlim_{V\triangleleft U} H^1(U/V,ker(\lambda)^V),
$$
where the limit is taken over all open normal subgroups of $U$, which are
necessarily of finite index.  By Corollary
1 of \cite{AW} we know that each $H^1(U/V,ker(\lambda)^V)$ is annihilated
by $|U/V|$.  However $H^1(U/V,ker(\lambda)^V)$ naturally has the structure
of a $\Cp$-vector space and we deduce that $H^1(U/V,ker(\lambda)^V) = 0$
and therefore that $H_{con}^1(U, ker(\lambda)) =0$.
\end{proof}

\begin{prop}
There is a surjection of $\Cp$-vector
spaces:
$$ \xymatrix{\pi_{f,l}^U \otimes_{\kappa} \Cp \ar[r]^{\lambda} &\pi_{f_{\kappa},l}^U
}, $$
which for all but finitely many $\kappa \in Sp(A)$ is an isomorphism.
\end{prop}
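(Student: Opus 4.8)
The plan is to produce the surjection by base-changing Lemma~17 and then to measure the defect of injectivity using the projective $A$-module structure established in Lemma~14. First I would construct the map: by Lemma~16 (with the functoriality of the trace/specialisation discussion preceding it) the specialisation map $\lambda: \mathcal{S}^{\dagger}_{\phi} \to \mathcal{S}^{\dagger}_{\phi\circ\kappa}$ has kernel $m_{\kappa}\cdot\mathcal{S}^{\dagger}_{\phi}$, so on $U$-invariants we get a natural factorisation through $\pi_{f,l}^U/m_{\kappa}\pi_{f,l}^U = \pi_{f,l}^U\otimes_{\kappa}\Cp$. The induced map $\pi_{f,l}^U\otimes_{\kappa}\Cp \to \pi_{f_{\kappa},l}^U$ is surjective because $\pi_{f,l}^U \to \pi_{f_{\kappa},l}^U$ already is, by Lemma~17. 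This gives the surjection for every $\kappa \in Sp(A)$.

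Next I would identify when this surjection is an isomorphism. Since $A$ is a Dedekind domain (we are inside the cover $\mathcal{V}$) and $\pi_{f,l}^U$ is a finite projective, hence locally free, $A$-module of some constant rank $d$, the dimension $\dim_{\Cp}\pi_{f,l}^U\otimes_{\kappa}\Cp = d$ is independent of $\kappa$. So it suffices to show that $\dim_{\Cp}\pi_{f_{\kappa},l}^U = d$ for all but finitely many $\kappa$; equivalently, that the function $\kappa \mapsto \dim_{\Cp}\pi_{f_{\kappa},l}^U$ is upper semicontinuous and generically equal to $d$. Upper semicontinuity is automatic from the surjection just built (the target dimension never exceeds the source dimension). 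For the generic lower bound I would argue as follows: the kernel $\pi_{f,l}^U[m_{\kappa}] := \ker(\pi_{f,l}^U \to \pi_{f_{\kappa},l}^U)$ contains $m_{\kappa}\pi_{f,l}^U$, and the quotient $\pi_{f,l}^U[m_{\kappa}]/m_{\kappa}\pi_{f,l}^U$ is precisely the obstruction; this is a subquotient controlled by $\mathrm{Tor}_1^A(\pi_{f_{\kappa},l}^U,\Cp)$, or more concretely by whether the inclusion $\pi_{f,l}^U \hookrightarrow N$ (with $N$ finite projective, from the definition of $\mathcal{V}$) has torsion-free cokernel at $m_{\kappa}$.

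The cleanest route, and the one I would actually carry out, is to use that $\pi_{f,l}^U$ sits inside the finite projective $A$-module $N = \mathcal{S}_{\phi}(r,V^p)^{U_p=a_p}$-component with $Q(T)$ as in the construction of $\mathcal{V}$. The quotient $M := N/\pi_{f,l}^U$ is a finite $A$-module; over the Dedekind domain $A$ it is the direct sum of a projective part and a torsion part supported on finitely many maximal ideals. For $\kappa$ avoiding this finite support, $\mathrm{Tor}_1^A(M,\Cp) = 0$, so tensoring the sequence $0 \to \pi_{f,l}^U \to N \to M \to 0$ with $\Cp$ over $\kappa$ stays left-exact, and one checks that $\pi_{f,l}^U\otimes_{\kappa}\Cp \hookrightarrow N\otimes_{\kappa}\Cp = \mathcal{S}_{\phi\circ\kappa}(r,V^p)^{U_p=a_p}$, with image exactly the span of $f_{\kappa}$ under $\glql$, i.e. $\pi_{f_{\kappa},l}^U$. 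Combined with the surjection, this forces an isomorphism for all $\kappa$ outside that finite set.

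The main obstacle I anticipate is the last identification: checking that, after specialising away from the bad locus, the image of $\pi_{f,l}^U\otimes_{\kappa}\Cp$ inside $\mathcal{S}_{\phi\circ\kappa}(r,V^p)^{U_p=a_p}$ is not merely contained in but equal to $\pi_{f_{\kappa},l}^U = \Cp[\glql]f_{\kappa}\cap (\cdots)^U$. This requires knowing that $\glql$-translates of $f$ specialise to $\glql$-translates of $f_{\kappa}$ and that no new invariant vectors appear in the limit — which is exactly what the surjectivity of Lemma~17 gives in one direction, so the real content is ruling out a drop in dimension, handled by the $\mathrm{Tor}$ vanishing above. The finitely many excluded $\kappa$ are precisely the maximal ideals in the support of the torsion submodule of $N/\pi_{f,l}^U$.
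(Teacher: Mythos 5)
Your argument is essentially the paper's own proof: the surjection comes from the surjectivity of $\pi_{f,l}^U \rightarrow \pi_{f_{\kappa},l}^U$ together with the fact that $m_{\kappa}\cdot\pi_{f,l}^U$ dies under specialisation, and generic injectivity comes from the same short exact sequence $0 \rightarrow \pi_{f,l}^U \rightarrow N \rightarrow C \rightarrow 0$ over the Dedekind domain $A$, with $\mathrm{Tor}_A^1(C, A/m_{\kappa})$ vanishing away from the finite support of the torsion submodule of $C$ --- exactly the paper's reduction. The one loose end is your asserted identification $N\otimes_{\kappa}\Cp = \mathcal{S}_{\phi\circ\kappa}(r,V^p)^{U_p=a_p}$, which is stronger than needed and not justified; all the argument requires is that the specialisation map is injective on $N\otimes_{\kappa}\Cp$, equivalently $\ker(\lambda) \subseteq \pi_{f,l}^U\cap m_{\kappa}N$, and this follows (as in the paper) from the computation of the kernel of specialisation as $m_{\kappa}\cdot\mathcal{S}_{\phi}(r,V^p)$ combined with the decomposition $\mathcal{S}_{\phi}(r,V^p)=N\oplus F$ and the injectivity of $U_p-a_p$ on $F$.
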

\begin{proof}By lemma 16 this statement is equivalent to $ker(\lambda) = m_{\kappa}
\cdot \pi_{f,l}^U$ for all but finitely many $\kappa$.   By lemma 15 
$$ker(\lambda) = \pi_{f,l}^U \cap  (m_{\kappa} \cdot \mathcal{S}_{\phi}(r, V^p)).$$  
This can be further simplified to give 
$$ker(\lambda) = \pi_{f,l}^U \cap  (m_{\kappa} \cdot \mathcal{S}_{\phi}(r, V^p))^{U_p=a_p}.$$ 
By definition there is a decomposition $\mathcal{S}_{\phi}(r, V^p)= N\oplus
F$ with $U_p-a_p$ acting by zero on $N$. $U_p-a_p$ never acts as zero on
$F$.  Hence we
deduce 
$$(m_{\kappa} \cdot \mathcal{S}_{\phi}(r, V^p))^{U_p=a_p} \subset m_{\kappa}
\cdot N.$$ By construction $\pi_{f,l}^U \subset N$, so the statement of the
proposition is reduced to proving that 
$$m_{\kappa}
\cdot \pi_{f,l}^U =  \pi_{f,l}^U \cap (m_{\kappa}
\cdot N)$$ for all but finitely many $m_{\kappa} \in Sp(A)$.  This statement
is equivalent to proving that the natural map 
$$\xymatrix{ \pi_{f,l}^U \otimes_{\kappa} \Cp \ar[r] & N \otimes_{\kappa}
\Cp}$$
is infective for all but fintiely many $\kappa$.  Let $C$ be the cokernel
of the natural inclusion $\pi_{f,l}^U \subset N$.  Hence there is a short
exact sequence of $A$-modules:
$$\xymatrix{ 0 \ar[r] &\pi_{f,l}^U  \ar[r] &N \ar[r] &C \ar[r] &0
}$$
This gives rise to the long exact sequence:
$$\xymatrix{ ... \ar[r] &Tor_A^1(N, A / m_{\kappa}) \ar[r] &Tor_A^1(C, A / m_{\kappa}) \ar[r] &\pi_{f,l}^U\otimes_{\kappa}\Cp  \ar[r]&..}$$
$$
\xymatrix{ ..\ar[r] &N\otimes_{\kappa}
\Cp \ar[r] &C\otimes_{\kappa} \Cp \ar[r] &0
}.$$  
Hence the injectivity of this map is reduced to the vanishing of $Tor_A^1(C, A / m_{\kappa})$.  Computing Tor  using either the  left or right factor gives
the same result.
Hence the short exact sequence of $A$-modules:
$$\xymatrix{ 0 \ar[r] &m_{\kappa} \ar[r] &A \ar[r] &A/ m_{\kappa}\ar[r] &0
}$$
gives rise to the long exact sequence:
$$\xymatrix{ ... \ar[r] &Tor_A^1(C, A) \ar[r] &Tor_A^1(C, A/m_{\kappa}) \ar[r] & C \otimes_A m_{\kappa}  \ar[r]&..}$$
$$
\xymatrix{ ..\ar[r] &C \ar[r] &C\otimes_{\kappa} \Cp \ar[r] &0
}.$$  
$A$ is a flat $A$-module hence $Tor_A^1(C, A)$ is trivial.  We deduce that
$$Tor_A^1(C, A/m_{\kappa})=0 \Leftrightarrow C\otimes_A m_{\kappa} \rightarrow
C \text{ is injective. }$$
$C$ is a finitely generated module over a Dedekind domain.  The structure
theory of such modules is well understood.  If $C_{tor} \subset C$ denotes
the torsion submodule then 
$$C \cong C_{tor} \oplus C/ C_{tor}.$$
By construction the right hand summand is torsion free.  Hence it is projective
so flat.  We deduce that $Tor_A^1(C/C_{tor}, A/m_{\kappa})=0$.  $Tor$ respects
direct summands hence
$$Tor_A^1(C, A/m_{\kappa}) = Tor_A^1(C_{tor}, A/m_{\kappa}).$$
The general structure theory for finitely generated torsion modules over
a Dedekind domain implies that $C_{tor}$ is the finite direct sum of modules
of form $A/I$ for some ideal $I \subset A$. Thus we have further reduced
the problem to determining $Tor_A^1(A/I, A/m_{\kappa})$.  The same argument
as above implies that
$$Tor_A^1(A/I, A/m_{\kappa})=0 \Leftrightarrow A/I\otimes_A m_{\kappa} \rightarrow
A/I\text{ is injective. }$$
$A/I\otimes_A m_{\kappa}\cong m_{\kappa}/ I \cdot m_{\kappa}$, so this map
is injective if and only if $m_{\kappa} \cap I = I \cdot m_{\kappa}$.  This
is if and only if $m_{\kappa}$ and $I$ are coprime.  There are only finite
many maximal ideals diving $I$, hence $Tor_A^1(A/I, A/m_{\kappa})=0$ for
all but finitely many $m_{\kappa} \in Sp(A)$.  Because $Tor_A^1(C, A/m_{\kappa})$
is the direct sum of finitely many such groups we deduce the result.
Where $Tor_A^1(C, A/m_{\kappa}) \neq 0 $ there is a strict inclusion 
 $ m_{\kappa}
\cdot \pi_{f,l}^U \subset ker(\lambda)$ and all we know is that the $\lambda$
map in the statement of the proposition is a surjection.
\end{proof}
Fix a Haar measure on $\glql$.  Let $\mathcal{H}_U$ denote the Hecke algebra
of compactly supported, $U$ bi-invariant functions on $\glql$ with coefficients
in $\Cp$.   $\pi_{f,l}^U$ and $\pi_{f_{\kappa},l}^U$ are naturally $\mathcal{H}_U$-modules
and and the map $\lambda$ in lemma 16 is $\mathcal{H}_U$-equivariant . 
\begin{thm}
Let  $\widetilde{\mathcal{E}}$ be 
a normalisation of $\mathcal{E}$ (taken over $\Cp$).  There is a discrete subset $S \subset
\widetilde{\mathcal{E}}$
and a unique map $$\xymatrix{ tr_{aut}:\mathcal{H}_U
\ar[r]
&O(\widetilde{\mathcal{E}})},$$ such that for any $h \in \mathcal{H}_U$
and $\kappa \notin S$ the specialisation of $tr_{aut}(h)$ to $\kappa$
is equal to the trace of $h$ acting on  $\pi_{f_{\kappa},l}^U$.  In this sense
traces vary analytically, at least away from a discrete set.
\end{thm}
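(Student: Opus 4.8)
The plan is to build $tr_{aut}$ locally on the admissible cover $\mathcal{V}$ of $\widetilde{\mathcal{E}}$ fixed above and then glue. Recall that for each $(Y,\phi)=Sp(A)\in\mathcal{V}$, with $A$ a Dedekind domain, Proposition 13 and Lemma 14 tell us that $\pi_{f,l}^U$ is a finite projective $A$-module carrying a smooth $\mathcal{H}_U$-action. Hence for any $h\in\mathcal{H}_U$ the endomorphism of $\pi_{f,l}^U$ given by $h$ has a well-defined trace $tr_A(h)\in A=O(Y)$, using the trace on finite projective modules over a ring (which agrees with the naive trace after localising at any prime where the module becomes free). This defines, for each chart, a map $tr_{aut}^{(Y,\phi)}:\mathcal{H}_U\to O(Y)$; I would check it is $\Cp$-linear in $h$ (immediate) and record that it does not depend on the choice of $V\in\Sigma$ used to realise $\pi_{f,l}^U$ inside $\mathcal{S}_\phi(r,V^p)$, since a smaller $V$ only enlarges the ambient space but not the $U$-invariants of the submodule $\pi_{f,l}^U$ itself.

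Next I would establish compatibility on overlaps, i.e. that the locally defined traces glue to an element of $O(\widetilde{\mathcal{E}})$. The key input is the functoriality of the trace stated just before Theorem 18: for a map of affinoid algebras $\kappa:A\to B$ over $\Cp$ one has $\pi_{f,l}^U\otimes_\kappa B$ projective over $B$ and $tr(\Psi\otimes 1)=\kappa(tr(\Psi))$ for $\Psi\in End_A(\pi_{f,l}^U)$. Applying this with $\kappa$ a restriction map between two charts of $\mathcal{V}$ (or to a common refinement), and using that the formation of $\pi_{f,l}^U$ is itself compatible with such base change — which follows from Proposition 10 together with the description $\pi_{f,l}^U\subset N$, $N$ being the base-change-compatible direct summand cut out by $Q^*(U_p)$ — shows that the local traces $tr_{aut}^{(Y,\phi)}(h)$ agree on intersections. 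By the sheaf property of $O(-)$ on the rigid space $\widetilde{\mathcal{E}}$ these patch to a global $tr_{aut}(h)\in O(\widetilde{\mathcal{E}})$, and linearity in $h$ is inherited.

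It remains to identify the specialisation. Fix $\kappa\in\widetilde{\mathcal{E}}(\Cp)$ lying in a chart $Sp(A)$, corresponding to a maximal ideal $m_\kappa\subset A$ and the eigenform $f_\kappa$. By functoriality of the trace along $A\to A/m_\kappa=\Cp$, the specialisation of $tr_{aut}(h)$ at $\kappa$ equals the trace of $h$ acting on $\pi_{f,l}^U\otimes_\kappa\Cp$. Proposition 17 gives a surjection $\pi_{f,l}^U\otimes_\kappa\Cp\twoheadrightarrow\pi_{f_\kappa,l}^U$ which is an isomorphism for all but finitely many $\kappa$ in each chart; take $S$ to be the union over the (locally finite) cover $\mathcal{V}$ of these finite exceptional sets, which is a discrete subset of $\widetilde{\mathcal{E}}$. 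For $\kappa\notin S$ the map $\lambda$ is an $\mathcal{H}_U$-equivariant isomorphism, so the trace of $h$ on $\pi_{f,l}^U\otimes_\kappa\Cp$ equals the trace of $h$ on $\pi_{f_\kappa,l}^U$, as required. Uniqueness of $tr_{aut}$ follows because two functions in $O(\widetilde{\mathcal{E}})$ agreeing off a discrete set on a (reduced, hence nice) rigid space agree identically: on each irreducible chart the complement of a discrete set is Zariski dense.

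The main obstacle, to my mind, is not the trace bookkeeping but verifying cleanly that the formation of $\pi_{f,l}^U$ as an $A$-module commutes with base change $A\to B$ — i.e. that $\pi_{f_\kappa,l}^U$ is genuinely the specialisation of $\pi_{f,l}^U$ and not merely a quotient — which is exactly the content of Proposition 17 and is where the finitely-many-bad-points phenomenon (and hence the discrete set $S$) enters. Everything else is a matter of assembling results already in hand.
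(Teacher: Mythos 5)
Your overall strategy coincides with the paper's: define the trace chart by chart using Lemma 14 (finite projectivity of $\pi_{f,l}^U$ over the Dedekind affinoid $A$), use the functoriality of the trace together with Proposition 17 to identify the specialisation at all but finitely many $\kappa$ with the trace on $\pi_{f_\kappa,l}^U$, take $S$ to be the union of the exceptional finite sets, and glue. The one place where your argument has a genuine gap is the overlap compatibility. You assert that the formation of $\pi_{f,l}^U$ commutes with the restriction map $A\to B$ between charts, and justify this by Proposition 10 together with the inclusion $\pi_{f,l}^U\subset N$ with $N$ the base-change-compatible summand cut out by $Q^*(U_p)$. That inference does not go through as stated: a submodule of a base-change-compatible summand need not itself be compatible with base change, and indeed Proposition 17 shows that even for specialisation at closed points the comparison map $\pi_{f,l}^U\otimes_\kappa\Cp\to\pi_{f_\kappa,l}^U$ can fail to be an isomorphism at finitely many points. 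So the claim you lean on (exact base change of $\pi_{f,l}^U$ along chart restrictions, with no exceptional locus) is precisely the delicate point you yourself flag, and it is neither proved in the paper nor supplied by your citation of Proposition 10 and the summand $N$.

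The repair is cheap, and it is what the paper does: you do not need base-change compatibility of the module at all, only of its trace, and you already have the pointwise statement on each chart. On the intersection of two charts (an affinoid curve, since $\widetilde{\mathcal{E}}$ is separated), both locally defined functions $tr_{aut}^{(Y,\phi)}(h)$ specialise, at all but finitely many $\Cp$-points, to the trace of $h$ on $\pi_{f_\kappa,l}^U$; two rigid functions on a reduced one-dimensional affinoid agreeing at infinitely many points are equal, so the local traces glue. With the gluing done this way your construction of $S$, the identification of specialisations off $S$, and your uniqueness argument (Zariski density of the complement of a discrete set on each irreducible chart) all stand, and the proof matches the paper's.
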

\begin{proof}
Fix for the moment an element of this cover $(Y, \phi) \in \mathcal{V}$.
Let $f_Y$ be the corresponding eigenform. By
lemma 14  $\pi_{f_Y, l}^U$ is a finite projective $A$-module. Fix $h \in \mathcal{H}_U$.  There
is a natural inclusion $h \in End_A(\pi_{f_Y, l}^U)$.  Hence such
an element possesses a trace in $A$.  The map $\lambda$ in lemma 16 is a
map of Hecke modules. By proposition 17 and the comments immediately preceding
lemma 15 the restriction of
this trace to all but finitely many $\kappa \in Sp(A)$ gives the trace of
$h$ acting on $\pi_{f_{\kappa}, l}^U$, where $f_{\kappa}$ is the restriction
of $f_Y$ to $\kappa$.  Hence $h$ gives rise to a function on $Y$ whose restriction
to all but finitely many points (independent of $h$) is the trace of $h$ acting on $\pi_{f_{\kappa}, l}^U$.

$\widetilde{\mathcal{E}}$ is a separated rigid space hence the
intersection of any two element of $\mathcal{V}$ is and affinoid curve. Any
two functions on an affinoid curve which agree at infinitely many points
must be the same.
Hence we deduce that the trace of $h$ lifts to a global function on $\widetilde{\mathcal{E}}$.
Letting $\mathcal{S}$ be the set of points for which the map in propostion
17 is not an isomorphism then we are done.
\end{proof}

\section{Galois Side}
\subsection{Galois Representations on the Eigencurve}
Let $\mathcal{E}$ denote the tame level $N$, cuspidal eigencurve over $\Qp$,
and $G_{\mathbb{Q}} = Gal(\overline{\mathbb{Q}}/ \mathbb{Q})$.
$\mathcal{E}$ naturally comes equipped with a universal continuous pseudocharacter:
$$\xymatrix{\mathcal{T}:G_{\mathbb{Q}} \ar[r] &O(\mathcal{E})
}.
$$
If $\kappa \in \mathcal{E}(\Cp)$ we denote the specialisation to $\kappa$
by $\mathcal{T}_{\kappa}$.   A theorem of R. Taylor (\cite{TAY})ensures
that this is the trace of a unique continuous semi-simple 2 dimensional  representation
$\rho_{\kappa}$, over $\Cp$.  Let $S$ denote the absolutely reducible
locus in the sense that $\rho_{\kappa}$  is absolutely reducible.  This is a discrete
subset and contains no classical cuspforms. Let $\mathcal{E}^o = \mathcal{E}\backslash
S$.  The classical locus of $\mathcal{E}$ is a Zariski dense subset. We deduce that the classical locus is still dense
in $\mathcal{E}^{o}$.  If $Y=Sp(A) \subset \mathcal{E}^o$ is an admissible
open subset then let $\mathcal{T}_Y$ be the restriction of $\mathcal{T}$
to $Y$. By lemma 7.2 of \cite{CHE} this is the trace of a unique, continuous
representation $$\xymatrix{ \rho_Y :G_{\mathbb{Q}} \ar[r] &B^*
}$$
where $B$ is a rank 4 Azumaya algebra over $A$.  By the definition of an
Azumaya algebra, $B$ is locally a matrix algebra.  We deduce that $Y$ has
an  admissible open affinoid cover, $\mathcal{V}$, such that for any $X \in \mathcal{V}$,
$\mathcal{T}_X$ is the trace of a unique (over $O(X)$) continuous representation
$$
\xymatrix{\rho_X: G_{\mathbb{Q}} \ar[r] &GL_2(O(X))
}.
$$
Hence we can find an admissible open affinoid cover of $\mathcal{E}^o$ with
this property.  Let us fix such a cover, $\mathcal{U}$. 

Let $F/\Qp$ be a finite extension and $\kappa \in \mathcal{E}^o(F)$.  Let $\wl$ denote the Weil group at $l$.    By the above we can attach to
$\kappa$ a unique, continuous, 2 dimensional representation of $\wl$ over
$F$.  Grothendieck's Monodromy theorem (4.2.2 \cite{T}) allows us to attach a 2 dimensional Frobenius semi-simple
Weil-Deligne representation (4.1.2 \cite{T}) to such a $\kappa$.  Applying the local
Langlands correspondence (see \S 5.3) we get an  irreducible smooth
representation of $\glql$.  We will generalise this to arbitrary $\kappa
\in \mathcal{E}^o(\Cp)$ and study how these objects vary across $\mathcal{E}^o$.

\subsection{Weil-Deligne Representations in Families}
Grothendiecks Monodromy theorem (4.2.2 \cite{T}) provides a dictionary between
$p$-adic representations of $\gl = Gal(\overline{\mathbb{Q}_l}/ \mathbb{Q}_l)$ and Weil-Deligne representations of $\wl$.

More precisely,  fix a finite field extension $F/\Qp$, an inverse (geometric)
Frobenius element $\Phi$ and a nonzero
additive homomorphism
$t_p:\il \rightarrow \mathbb{Q}_p$. ($t_p$ is unique up to constant scalar multiple.)  If $(\rho, N)$ is an $n$-dimensional Weil-Deligne representation
defined over $F$ then we define a continuous (with respect to the $p$-adic
topology on $F$) representation
$$\rho_l:\wl \rightarrow GL_n(F)$$
according to $\rho_l(\Phi^n\cdot u) = \rho(\Phi^n\cdot u)exp(t_p(u)\cdot
N)$, for all $n \in \mathbb{Z}$ and $u \in \il$.  Grothendieck's theorem
allows us to got backwards (4.2.1
\cite{T}) giving a bijection between isomorphism classes
of n-dimensional Weil-Deligne representations over $F$ and isomorphism classes
of continuous n-dimensional representations of $\wl$ over $F$.   
\\ \\
Let $A/\Qp$ be a reduced, affinoid algebra and $\rho_l$
a continuous $A$-linear representation of $\wl$ on a free $A$-module of rank
$n$. Fixing a basis naturally gives a morphism
$$\rho_l:\wl \rightarrow GL_n(A).$$
At any $F$-valued point of
$Sp(A)$ we can invoke the above dictionary to get an $n$-dimensional Weil-Deligne
representation.  If we take an arbitrary $\Cp$-valued point then this procedure
breaks down because $\Cp$ is not even an algebraic extension of $\Qp$.  Instead
we generalise Grothendieck's construction to work across all of $Sp(A)$
simultaneously.   
\begin{prop}Let $\rho_l$ be a continuous $A$-linear representation of $\gl$
on the free $A$-module $M$ of rank $n$. There exist a nilpotent endomorphism
$N$ of $M$ such that $\rho_l(u) = exp(t_p(u)N)$ for $u$ in an open subgroup
of $\il$.
\begin{proof}     The proof is almost identical to Grothendieck's original which
can be found in the appendix of \cite{ST}. Let $|.|$ denote the spectral norm
($A$ is reduced) on $A$.  Note that this gives rise to an ultrametric on
$A$. Define $A^o = \{ a \in A ; |a| \leq 1 \} $.  Consider
the open subgroup $H:= 1+p^2M_n(A^o) \subset GL_n(A)$. After fixing a basis
for $M$, the inverse image of $H$ under $\rho_l$ is an open subgroup of $\gl$.
Hence there is a finite field extension $K/\mathbb{Q}_l$ such that if $G_K:=Gal(\bar{\mathbb{Q}}_p/K)$
then  $\rho_l(G_K) \subset H$.  Let  $\kappa \in Sp(A)(F)$ for some $F/\Qp$
and $O_F$ its ring of integers.  Specialising  $\rho_l$ at $\kappa$ and then
restricting to $G_K$ yields a continuous homomorphism
$$\rho_{l, \kappa}: G_K \rightarrow 1+p^2 M_n(O_F)$$
If $K^t$ is the maximal tame extension of $K$ then the wild ramification group
$P_K:=Gal(\bar{K}/ K^t)$ is pro-l.  Because the image of $G_K$ is necessarily
pro-p we deduce that $\rho_l$ is trivial on $P_K$.  If we further restrict
to $I_K$ we know that it factors through $I_K/P_K =Gal(K^t/K^{nr})$, where
$K^{nr}$ is the maximal unramified extension of $K$. This group is (non-canonically)
isomorphic to $\prod_{p\neq l}\mathbb{Z}_p$. The same argument as above shows
that $\rho_{l, \kappa}$ is trivial on all components of this product away
from $p$. This is true of any such $\kappa$ point, the set of which naturally forms a
zariski dense set of $Sp(A)$. Hence we deduce that restricted of $\rho_{l}$
to $I_K$ is determined
by a continuous homomorphism
$$\rho_{l}: \mathbb{Z}_p  \rightarrow 1+p^2 M_n(A^o).$$
$\mathbb{Z}$ is dense in $\mathbb{Z}_p$, hence this morphism is determined
by $\alpha:= \rho_l(1)$. There exists $c \in \Qp$ such that $ct_p(I_F)=\mathbb{Z}_p$
and $\rho_l(u) = \alpha^{ct_p(u)}$. Note that raising $\alpha$ to something
in $\mathbb{Z}_p$ makes sense because $\alpha \in 1+p^2 M_n(A^o)$. The log
map is a group homomorphism from $1+p^2 M_n(A^o)$ to $ M_n(A^o)$ defined
by power series as follows:
$$log(1+B) = B - \frac{B^2}{2}+\frac{B^3}{3}-\frac{B^4}{4}+....$$  If $1+B
\in 1+p^2 M_n(A^o)$ then there is map $exp$, which is defined at
the level of power series and has the property that $exp(log(1+B))=1+B$.
Now define $N:= c\cdot log(\alpha)$, then $\rho_l(u) = exp(t_p(u)N)$.
We can now do the usual trick of conjugating by $\Phi$ to show that $\rho_l(\Phi)N\rho_l(\Phi)^{-1}
= p^{-1}N$ and hence that $N$ is nilpotent.
  
\end{proof} 
\end{prop}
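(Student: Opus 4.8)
The plan is to reproduce Grothendieck's classical monodromy argument (in the form recorded in the appendix of \cite{ST}) almost verbatim, the one genuinely new point being that the coefficient ring $A$ is no longer a field and the group $1+p^{2}M_{n}(A^{o})$ is not visibly pro-$p$; the two places where the classical proof exploits pro-$p$-ness of the image will instead be handled by specialising to rigid points and spreading out using that $A$ is reduced.

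First I would fix a basis of $M$, obtaining a continuous homomorphism $\rho_l:\gl\to GL_{n}(A)$, let $|\cdot|$ be the spectral norm on $A$ (power-multiplicative and submultiplicative because $A$ is reduced), and put $A^{o}=\{a\in A:|a|\le 1\}$. The subgroup $H:=1+p^{2}M_{n}(A^{o})\subset GL_{n}(A)$ is open, the series for $\log$ and $\exp$ converge on $H$ and $p^{2}M_{n}(A^{o})$ respectively and are mutually inverse there, and $H$ carries a well-defined $\Zp$-power operation $\alpha^{x}:=\exp(x\log\alpha)$. By continuity of $\rho_l$ there is a finite extension $K/\Q_l$ with $\rho_l(G_{K})\subset H$, where $G_{K}=Gal(\overline{\Q}_l/K)$, with inertia $I_{K}$ and wild inertia $P_{K}$.

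Now the crucial step. The group $P_{K}$ is pro-$l$ and $I_{K}/P_{K}\cong\prod_{q\neq l}\Z_{q}=\Zp\times\prod_{q\neq l,p}\Z_{q}$; I want $\rho_l$ to be trivial on $P_{K}$ and on each factor $\Z_{q}$ with $q\neq p$, so that $\rho_l|_{I_{K}}$ factors through a continuous surjection $I_{K}\twoheadrightarrow\Zp$. Over a field this is immediate from pro-$p$-ness of the image, but $H$ is not obviously pro-$p$ for a general reduced affinoid $A$. Instead, for every finite $F/\Qp$ and $\kappa\in Sp(A)(F)$ the specialisation $\rho_{l,\kappa}:G_{K}\to 1+p^{2}M_{n}(O_{F})$ does land in a genuine pro-$p$ group, so the classical argument gives $\rho_{l,\kappa}(P_{K})=1$ and $\rho_{l,\kappa}|_{I_{K}}$ factoring through $\Zp$. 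Such $\kappa$ form a Zariski dense subset of $Sp(A)$, and for each $g$ in $P_{K}$ or in one of the non-$p$ topological generators of tame inertia the condition $\rho_l(g)=1$ is Zariski closed; holding on a dense set and $A$ being reduced, it holds identically. Hence $\rho_l|_{I_{K}}$ factors through a surjection $I_{K}\twoheadrightarrow\Zp$ and, $\Z\subset\Zp$ being dense, is determined by $\alpha:=\rho_l(\gamma_{0})\in H$ for a suitable $\gamma_{0}\in I_{K}$. Since $t_p$ is additive and vanishes on wild inertia and on the prime-to-$p$ tame part, $t_p|_{I_{K}}$ factors through the same $\Zp$; choosing $c\in\Qp^{\times}$ with $c\,t_p(I_{K})=\Zp$ and $\gamma_{0}$ with $c\,t_p(\gamma_{0})=1$ gives $\rho_l(u)=\alpha^{c\,t_p(u)}$ for all $u\in I_{K}$. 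Setting $N:=c\cdot\log(\alpha)\in p^{2}M_{n}(A^{o})$ then yields $\rho_l(u)=\exp(t_p(u)N)$ for all $u$ in the open subgroup $I_{K}\subset\il$.

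Finally, nilpotency. Fix a lift $\Phi\in\gl$ of geometric Frobenius; after passing to a smaller open subgroup so that $\Phi$ conjugates it into $I_{K}$, the relation $\Phi u\Phi^{-1}=u^{1/l}$ in tame inertia gives $t_p(\Phi u\Phi^{-1})=l^{-1}t_p(u)$, so comparing $\rho_l(\Phi)\exp(t_p(u)N)\rho_l(\Phi)^{-1}$ with $\exp(l^{-1}t_p(u)N)$ and using injectivity of $\exp$ near the identity together with a choice of $u$ making $t_p(u)$ a unit, one gets $\rho_l(\Phi)N\rho_l(\Phi)^{-1}=l^{-1}N$. Conjugate matrices over $A$ have the same characteristic polynomial, so $\chi_{N}(T)=l^{-n}\chi_{N}(lT)$; comparing coefficients and using $1-l^{-i}\in\Qp^{\times}\subset A^{\times}$ for $i\ge 1$, every non-leading coefficient of $\chi_{N}$ vanishes, i.e.\ $N$ is nilpotent. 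I expect the main obstacle to be exactly the third paragraph: one must check carefully that $1+p^{2}M_{n}(A^{o})$ really behaves like a $p$-adic analytic group (which is precisely where reducedness and the spectral norm enter) and that the two purely pointwise inputs — the vanishing on $P_{K}$ and the factoring of $\rho_l|_{I_{K}}$ through $\Zp$ — genuinely spread out over the reduced affinoid $Sp(A)$.
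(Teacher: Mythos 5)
Your proposal follows essentially the same route as the paper's proof: spectral norm and $A^o$, the open subgroup $1+p^2M_n(A^o)$, triviality on wild inertia and the prime-to-$p$ tame part checked at the Zariski dense set of $F$-valued points and spread out using reducedness, then $N:=c\cdot\log(\alpha)$ and nilpotency via Frobenius conjugation. Your version is correct (and in fact slightly more careful: you spell out why the pointwise vanishing statements are Zariski closed, and your scaling factor $l^{-1}$ in $\rho_l(\Phi)N\rho_l(\Phi)^{-1}=l^{-1}N$ is the right one for a representation of $G_{\mathbb{Q}_l}$, where the paper writes $p^{-1}$).
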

Given any such $\rho_l$ we can find an $N \in End_A(M)$ as in the above proposition. Define the representation $$\rho:\wl \rightarrow GL_n(A)$$ by $\rho(\Phi^n\cdot u) = \rho_l(\Phi^n\cdot u)exp(-t_p(u)\cdot
N)$.  By construction $\rho$ is continuous with respect to the discrete topology on $M$.  An elementary check shows that $\rho(w)N\rho(w)^{-1} =
|w|N$ for all $w \in \wl$, where $|.|$ is inherited from the local reciprocity
isomorphism, sending $\Phi$ to a uniformiser (see \S5.3).
 Hence $(\rho, N)$ is
a Weil-Deligne representation on $M$, interpreted in the appropriate sense. Specialising  to any $F$-valued of $Sp(A)$
recovers classical construction. One advantage of this approach is that specialising to any $\Cp$-valued point
of $Sp(A)$ gives a Weil-Deligne representation over $\Cp$.   Note that even
though $N$ may be nonzero it may specialise to zero at some $\kappa \in Sp(A)(\Cp)$.
\\ \\
Ultimately we are interested in applying the local Langlands correspondence
to such Weil-Deligne representations across $\mathcal{E}^o$. Let $Y=Sp(A) \in \mathcal{U}$. Then $\rho_Y$ restricted to $\wl$ is a representation to which
we can apply proposition 18.  Hence we get $(\rho_{Y,l},N)$, a 2 dimensional
Weil-Deligne
representation over $A$. This gives a family of Weil-Deligne representations
over all $\mathcal{E}^o$.

\subsection{Local Langlands and Local-Global compatibility}
The local Langlands correspondence for $GL_n$ over  a $p$-adic field $F$ (proven in
\cite{HT}) gives a natural bijection between isomorphism classes of irreducible
smooth representations of  $GL_n(F)$  (\cite{JL}) and $n$-dimensional $\Phi$-semisimple
Weil-Deligne representations of $W_F$ (\cite{T}).  The base field is generally taken
to be algebraically closed of characteristic 0. \cite{D} provides a fairly comprehensive
survey
in the case $n=2$. The bijection, though natural, can be normalised in different
ways. The first choice to be made is how to normalise the local
class field reciprocity isomorphism $W_F^{ab} \simeq F^*$.  We will adopt
the conventions of \cite{D}, where geometric Frobenius elements map to uniformisers.
If $|.|$ is the absolute value on $F$ then the local reciprocity map
induces a character on $W_F$ which we also denote $|.|$. The local reciprocity
isomorphism induces a character $|.|$ on $GL_n(F)$ after composing with $det$.
The unitary normalisation,
$\pi_u$,
\ is determined uniquely by stipulating that $L$ and
$\epsilon$ factor agree. The Tate normalisation, $\pi_t$, is defined by twisting
the unitary correspondence by $|.|^{\frac{1}{2}}$.   More precisely, if $\sigma$ is an Weil-Deligne
representation of the the above type over an algebraically closed field of
characteristic 0 then $\pi_u(\sigma)\otimes |.|^{\frac{1}{2}} = \pi_t(\sigma)$.
The Tate normalisation has the property that if $\sigma$ is defined over any field of characteristic 0 then so is $\pi_t(\sigma)$ (3.2.7 \cite{D})
in a way which is compatible with base change.
Let $\chi$ be a character of $W_F$. Using the local reciprocity isomorphism
$\chi\circ det$ becomes a character of $GL_n(F)$ which we also denote as
$\chi$. The Tate correspondence
is compatible with twisting by $\chi$ in the following way:
$$\pi_t (\sigma \otimes \chi) = \pi_t(\sigma)\otimes \chi.$$

Let $f$ be a classical normalised cuspidal eigenform of weight $k\geq 1$ and
level $N$ and Nebentypus $\chi$ defined over $\bar{\mathbb{Q}}_p$. Attached
to such an $f$ is a continuous two dimensional representation $\rho_f$ of the absolute
Galois group $G_{\mathbb{Q}}$ over $\bar{\mathbb{Q}}_p$ characterised by the following conditions: it is unramified outside $Np$ and for $l$ not dividing
$Np$
the characteristic polynomial of a geometric Frobenius element is 
$$ X^2 + a_l X +\chi(l)l^{k-1};$$
where $a_l$ is the eigenvalue of the Hecke operator $T_l$ acting on $f$. If we restrict to a
decomposition group at $l \neq p$ then by results of the previous section
we can associate to $f$ a 2-dimensional $\Phi$-semisimple Weil-Deligne representation
of $\wl$ which we denote $\sigma_{f,l}$.

On the other hand, we can associate to $f$ an automorphic representation of $\glaf$ over $\bar{\mathbb{Q}}_p$,
albeit up to a choice of normalisation.  Our choice of normalisation is determined
by the fact that away from $pN$ the eigenvalues of classical and adelic Hecke operators agree. Such a representation factors into the restricted tensor
product of smooth irreducible representations of $\glql$ as $l$ ranges over
all finite places of $\Q$.  In this way we can associate to $f$ a smooth
irreducible representation of $\glql$ over $\bar{\Q}_p$, which we denote 
$\pi_{f,l}$. Work of Carayol (\cite{C}) and other shows that
$$\pi_t(\sigma_{f,l}) = \pi_{f,l} \text{ for } l \neq p$$
This is local-global compatibility away from $p$.  Given the appropriate
reformulation (2.1.2 \cite{E}) there is a similar result
for $l = p$ as proven by Saito (\cite{SAI}). 

In \cite{E} the classical local Langlands correspondence is modified at non-generic
smooth irreducible representations of $\glql$.   In the case of $GL_2$ this
is just the $1$-dimensional representations factoring through the determinant.
If $$\sigma \cong   (\begin{pmatrix}\chi &0 \\
0 &\chi |.|^{-1}
\end{pmatrix}, 0 ),$$
then define $\pi_m(\sigma) = \mathcal{B}(\chi|.|^{\frac{1}{2}}, \chi|.|^{-\frac{1}{2}}),$
where $ \mathcal{B}(\chi|.|^{\frac{1}{2}}, \chi|.|^{-\frac{1}{2}})$ is the
(unitary, see  \S 11.2 \cite{DI}) reducible principal series with
 $1$-dimensional quotient $\pi_t(\sigma)$. For all other $\sigma$, \mbox{$\pi_m(\sigma)
 = \pi_t(\sigma)$}.

\subsection{Local Langlands in Families}
We now study how the local Langlands correspondence behaves over a 1-dimension, irreducible, connected, smooth affinoid rigid space  $Y=Sp(A)$
over $\Cp$. This is equivalent to the affinoid algebra $A$ being a Dedekind domain. Our ultimate goal is to apply these result to a normalisation
of the eigencurve which can be addmissibly covered by such spaces. 

Let $K$ be the field of fractions of $A$. Fix $\sigma = (\rho, N)$, a Weil-Deligne representation on a free $A$-module of rank 2. If  $\kappa \in
Sp(A)(\Cp)$ then we can specialise $(\rho, N)$ in two ways: locally at $\kappa$
or generically at $K$.  If we $\Phi$-semisimplify we get two $2$ dimensional
Weil-Deligne representations:  $(\rho_{\kappa}, N_{\kappa}) := \sigma_{\kappa}$ and $(\rho_{K}, N_K):= \sigma_{K}$.  Invoking Local Langlands gives $\pi_m(\sigma_{\kappa})$
and $\pi_m(\sigma_{K})$, two smooth irreducible representations of $\glql$
over $\Cp$ and $K$ respectively. Let $\overline{K}$ be an algebraic closure
of $K$ and $\sigma_{\overline{K}} = (\rho_{\overline{K}},N)$ be the $\Phi$-semi-simplification of $\sigma
\otimes \overline{K}$.   By compatibility of $\pi_m$ with extension of scalars we know that
$\pi_m(\sigma_{\overline{K}}) \cong \pi_m(\sigma_K) \otimes \overline{K}$.
  $\pi_m(\sigma_{\overline{K}})$ can take
one of three forms: Principal Series, Supercuspidal or Special.  We deal with each in turn.

\subsection{Principal Series}
Fix a square root of $l$ in $\Cp$.   This case corresponds $\sigma_{\overline{K}} = (\rho_{\overline{K}},0)$, where $\rho_{\overline{K}}$ is reducible. More
precisely:
$$\rho_{\overline{K}}\sim \begin{pmatrix}\chi_1 &0 \\
0 &\chi_2

\end{pmatrix},$$
where $\chi_1$ and $\chi_2$ are $\overline{K}$-valued quasi-characters of $\wl$
such that $\chi_1/\chi_2^{-1} \neq |.|^{\pm 1}$. We ignore the degenerate
case because this cannot occur on $\mathcal{E}^o$.
By definition, $\chi_1$ and $\chi_2$ are continuous with respect to the discrete
topology on $\overline{K}$.  $\il$ is profinite so compact, hence $\chi_i(\il)$
is a finite group for $i=1, 2$.    We deduce that \mbox{$\chi_i(\il) \subset \overline{\Q}
\subset A$} for $i=1,2$. By construction we know that the characteristic
polynomial, $P_{\Phi}$, of $\rho_{\overline{K}}(\Phi)$ has coefficients in $A$, hence $$\chi_1(\Phi)\chi_2(\Phi)
\in A^*, \chi_1(\Phi)+\chi_2(\Phi) \in A.$$
There is no reason for either $\chi_1$ or $\chi_2$ to be defined over $A$
or even $K$.
We make use of the following proposition:
\begin{prop} Let $A$ be a reduced integral afffinoid algebra over $\Cp$,
$K$ its field of fractions.  Let $L$ be a finite field extension of $K$ and,
$B$ be  the integral closure of $A$ in $L$. Then $B$  is a finite $A$-module
and in particular is affinoid.
\begin{proof}Theorem 3.5.1 of \cite{FVP} tells us that the integral closure
of $A$ in $K$ is a finite $A$-module which we denote $R$.  $A$ is noetherian
and hence we deduce that $R$ is a noetherian, integrally closed domain. $B$
is naturally the integral closure of $R$ in $L$. $K$ is of characteristic
zero hence $L/K$
is separable and we can apply proposition
8 of \cite{SL} to deduce that $B$ is a finite $R$-module.  Hence $B$ is
a finite $A$-module.
\end{proof}
\end{prop}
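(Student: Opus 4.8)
The plan is to factor the integral closure through the normalisation of $A$ and then reduce to the classical finiteness theorem for integral closures in finite separable extensions. First I would dispose of the special case $L=K$: here $B$ is just the normalisation $R$ of $A$ in its own fraction field. Since $A$ is a reduced affinoid $\Cp$-algebra, Noether normalisation exhibits it as a module-finite extension of a Tate algebra $\Cp\langle X_1,\dots,X_d\rangle$, and Tate algebras are excellent (in particular Japanese), so the integral closure of $A$ in $K$ is a finite $A$-module. This is precisely the content of Theorem 3.5.1 of \cite{FVP}, and I would simply quote it. Writing $R$ for this normalisation, $R$ is finite over the Noetherian ring $A$, hence Noetherian, and it is integrally closed in $K$ by construction; so $R$ is a Noetherian integrally closed domain with fraction field $K$.

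Next I would pass from $R$ to $B$. Note that $B$, the integral closure of $A$ in $L$, coincides with the integral closure of $R$ in $L$: an element of $L$ integral over $A$ is a fortiori integral over $R$, and conversely, since $R$ is integral over $A$, transitivity of integrality gives the reverse inclusion. Because $\Cp$ has characteristic $0$, the field $K$ has characteristic $0$ and the finite extension $L/K$ is automatically separable. The classical finiteness theorem — for a Noetherian integrally closed domain sitting inside a finite separable extension of its fraction field, proved via the nondegeneracy of the trace form and a discriminant argument, i.e. Proposition 8 of \cite{SL} — then shows that $B$ is a finite $R$-module.

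Composing the two steps, $B$ is finite over $R$ and $R$ is finite over $A$, so $B$ is a finite $A$-module. For the final clause, any $A$-algebra which is finitely generated as an $A$-module over an affinoid algebra $A$ is itself affinoid: picking module generators of $B$ and using their monic integral dependence relations over $A$ presents $B$ as a quotient of a Tate algebra over $\Cp$, so $B$ is affinoid.

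As for the main obstacle, the only genuinely deep input is the finiteness of the normalisation of an affinoid algebra used in the first step, which ultimately rests on the excellence of Tate algebras; everything downstream is either formal (transitivity of integrality, the affinoid criterion) or classical commutative algebra (the trace-pairing argument in the separable case). I would therefore make no attempt to reprove the affinoid normalisation result and would cite it directly, as the statement permits.
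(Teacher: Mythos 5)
Your proposal is correct and follows essentially the same route as the paper: quote Theorem 3.5.1 of \cite{FVP} for the finiteness of the normalisation $R$ of $A$ in $K$, observe that $R$ is a Noetherian integrally closed domain and that $B$ is its integral closure in the separable (characteristic zero) extension $L/K$, then apply Proposition 8 of \cite{SL} and compose the two finite extensions. The extra details you supply (transitivity of integrality identifying $B$ with the integral closure of $R$, and the observation that a module-finite algebra over an affinoid algebra is affinoid) are correct elaborations of steps the paper leaves implicit.
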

Define $L$ to be the splitting field of $P_{\Phi}$ over $K$.  This is a Galois
extension of degree at most 2.  Let $B$ be the integral closure of $A$ in
$L$.  By the above proposition this is an affinoid algebra which is finite over $A$. By construction $\chi_1$ and $\chi_2$ are defined over $B$.  The
local Langlands correspondence in this case is
correspondence  $$\pi_m(\sigma_{\bar{K}}) = \pi(\chi_1|.|^{\frac{1}{2}}, \chi_2|.|^{\frac{1}{2}}),$$
where we are adopting the notation and conventions of \S 11.2 \cite{DI}.
On the right hand side we are naturally viewing  the $\chi_i$ as characters
of $\Q_l^*$ via the local reciprocity isomorphism.

Define $$\Pi_B = \pi_B(\chi_1|.|^{\frac{1}{2}}, \chi_2|.|^{\frac{1}{2}}):= \{f:\glql
\rightarrow B; f \in \pi(\chi_1|.|^{\frac{1}{2}}, \chi_2|.|^{\frac{1}{2}})\}.$$
This makes sense because $\chi_1(a)\chi_2(d) \in
B$, for all $a, d \in \Q_l^*$. 

Let $P$ be the Borel subgroup consisting of upper triangular matrices in
$\glql$,  and $P(\mathbb{Z}_l) =
\glzl \cap P$. Fix $U \vartriangleleft \glzl$, an open compact subgroup such that $\chi_1 \chi_2$ is trivial on $P(\mathbb{Z}_l)\cap U$. This implies
that $\Pi_B^U\neq 0$.
\begin{lem}
$\Pi_B^U$ is
a finite free $B$-module of whose rank $d$ is equal to the order of $P(\mathbb{Z}_l)\backslash
\glzl / U$. Moreover,
$$\Pi_B^U\otimes_B \overline{K}
\simeq \pi(\chi_1|.|^{\frac{1}{2}}, \chi_2|.|^{\frac{1}{2}})^U.$$
\end{lem}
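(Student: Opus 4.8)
The statement is really a concrete computation with the induced representation $\pi(\chi_1|.|^{\frac{1}{2}},\chi_2|.|^{\frac{1}{2}})$, which is defined as a space of functions $f:\glql\to(\text{field})$ transforming appropriately under the Borel $P$ on the left; the point is that forming this induced representation and then taking $U$-invariants commutes with extension of scalars from $B$ to $\overline K$, and that the resulting module is free of the expected rank. First I would recall the explicit model: $\pi(\chi_1|.|^{\frac12},\chi_2|.|^{\frac12})$ consists of smooth functions $f$ with $f(pg)=(\chi_1|.|^{\frac12})(a)(\chi_2|.|^{\frac12})(d)\,\delta_P(p)^{\frac12}f(g)$ for $p=\begin{pmatrix}a&*\\0&d\end{pmatrix}$, and $\Pi_B$ is the $B$-submodule of such functions taking values in $B$ (this makes sense precisely because $\chi_1(a)\chi_2(d)\in B$, as noted just before the lemma). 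By the Iwasawa decomposition $\glql=P\cdot\glzl$, restriction to $\glzl$ identifies $\Pi_B$ with functions on $\glzl$ that transform under $P(\mathbb{Z}_l)$ on the left by the (now finite-order, hence $B$-valued) character $\chi_1\chi_2\cdot(\delta\text{-factor})$, where I should note that on $P(\mathbb{Z}_l)$ the modulus characters and $|.|$ are trivial, so the relevant character is just $\chi_1\chi_2$ restricted to $P(\mathbb{Z}_l)$.

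The second step is to take $U$-invariants. Since $U\vartriangleleft\glzl$ is open, a function in $\Pi_B^U$ is determined by its values on the finite set $P(\mathbb{Z}_l)\backslash\glzl/U$, and conversely: choosing coset representatives $g_1,\dots,g_d$, a function $f\in\Pi_B^U$ is freely specified by the $d$-tuple $(f(g_1),\dots,f(g_d))\in B^d$, subject only to the compatibility that if some $g_i$ satisfies $p g_i u=g_i$ with $p\in P(\mathbb{Z}_l)$, $u\in U$, then $f(g_i)=\chi_1\chi_2(p)f(g_i)$. Here is where the hypothesis that $\chi_1\chi_2$ is trivial on $P(\mathbb{Z}_l)\cap U$ (really: on $P(\mathbb{Z}_l)\cap gUg^{-1}$ for all $g\in\glzl$, which follows from normality of $U$) guarantees there is no constraint, so evaluation at the $g_i$ gives an isomorphism $\Pi_B^U\xrightarrow{\sim}B^d$ of $B$-modules. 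This proves freeness and identifies the rank as $d=\#\big(P(\mathbb{Z}_l)\backslash\glzl/U\big)$.

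Finally, the base-change statement: the identical description applies verbatim with $B$ replaced by $\overline K$ (the character $\chi_1\chi_2$ is valued in $\overline{\mathbb{Q}}\subset B\subset\overline K$, and the double-coset set and representatives are unchanged), giving $\pi(\chi_1|.|^{\frac12},\chi_2|.|^{\frac12})^U\cong\overline K^d$ compatibly with the evaluation maps, whence $\Pi_B^U\otimes_B\overline K\cong\overline K^d\cong\pi(\chi_1|.|^{\frac12},\chi_2|.|^{\frac12})^U$. I expect the main subtlety — not a deep one, but the step that must be handled carefully — is the triviality-of-the-constraint check: one must use that $U$ is normal in $\glzl$ to reduce $P(\mathbb{Z}_l)\cap g_iU g_i^{-1}$-triviality of $\chi_1\chi_2$ to the hypothesis on $P(\mathbb{Z}_l)\cap U$, and one must be careful that smoothness (local constancy) of the induced functions is automatic on the compact group $\glzl$ once $U$-invariance is imposed, so that no extra continuity condition intervenes.
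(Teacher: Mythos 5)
Your proof is correct and follows essentially the same route as the paper: the Iwasawa decomposition $\glql = P\cdot\glzl$ reduces everything to functions on $\glzl$, and $U$-invariant functions are then freely prescribed on the finite set $P(\mathbb{Z}_l)\backslash\glzl/U$, with the same description over $\overline{K}$ giving the base-change isomorphism. The only difference is that you spell out the details the paper compresses into ``by the conditions imposed on $U$'' --- namely the triviality of $|.|$ on $P(\mathbb{Z}_l)$ and the use of normality of $U$ in $\glzl$ to remove the constraint at every double coset --- which is a correct and welcome elaboration, not a departure.
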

\begin{proof}
 The Iwasawa decomposition
(4.5.2 \cite{BUM}) tells us that $\glql =P\cdot \glzl$. Hence $f\in \pi(\chi_1|.|^{\frac{1}{2}}, \chi_2|.|^{\frac{1}{2}})$ is uniquely determined by its behaviour on $\glzl$.
By the conditions imposed on $U$ we are free to choose the behaviour of $f$
on each element of the double coset space $P(\mathbb{Z}_l)\backslash
\glzl / U$.
\end{proof}
Let $\mathcal{H}_U$ be the Hecke algebra introduced in \S 4.
It naturally acts in a compatible way on both $\pi(\chi_1|.|^{\frac{1}{2}}, \chi_2|.|^{\frac{1}{2}})^U$ and $\Pi_B^U$.
Because $\pi(\chi_1|.|^{\frac{1}{2}}, \chi_2|.|^{\frac{1}{2}})$ is irreducible
we know that $\pi(\chi_1|.|^{\frac{1}{2}}, \chi_2|.|^{\frac{1}{2}})^U$ is
a simple finite dimensional $\mathcal{H}_U$-module (4.2.3 \cite{BUM}). Such a module
is determined up to isomorphism by its trace $tr:\mathcal{H}_U \rightarrow \overline{K}$.
By the above lemma we know that this map factors through $B$.
\begin{thm} If $tr:\mathcal{H}_U \rightarrow B$ is the trace of $\mathcal{H}_U$
acting on $\Pi_B^U$, then
it factors through $A$.  Moreover for any $\kappa \in Sp(A)(\Cp)$, the specialisation
of this map to $\kappa$ is the trace of $\mathcal{H}_U$ acting on $\pi_m(\sigma_{\kappa})^U$.
\end{thm}
\begin{proof}
Let $\tau \in Sp(B)(\Cp)$ be a point lying over $\kappa$, $m_{\tau}$ the
defining maximal ideal of $B$.  Let $\chi_{1,\tau}$
and
$\chi_{2,\tau}$ be the $\Cp$-valued quasi-character induced by $\tau$.  The
Weil representation $\chi_{1,\tau}\oplus
\chi_{2,\tau}$ has the same trace as $\rho_{\kappa}$, hence by the Brauer-Nesbitt
theorem we deduce that they are isomorphic. If $\mathcal{B}(\chi_{1,\tau}|.|^{\frac{1}{2}}, \chi_{2,\tau}|.|^{\frac{1}{2}})$ denotes the (potentially reducible) principal
series representation then there is a surjective morphism of $\glql$-modules:
$$\xymatrix{\Pi_B \ar[r] &\mathcal{B}(\chi_{1,\tau}|.|^{\frac{1}{2}}, \chi_{2,\tau}|.|^{\frac{1}{2}})
}.$$
The usual group cohomology argument (see lemma 16) tells us that the induced map:
$$\xymatrix{\Pi_B^U \ar[r] &\mathcal{B}(\chi_{1,\tau}|.|^{\frac{1}{2}}, \chi_{2,\tau}|.|^{\frac{1}{2}})^U
}$$
is surjective. $\Pi_B^U$ is finite and free by lemma 20.  We can choose a
basis whose image in $\mathcal{B}(\chi_{1,\tau}|.|^{\frac{1}{2}}, \chi_{2,\tau}|.|^{\frac{1}{2}})^U$
is a basis.  We deduce that the kernel is equal to $m_{\tau} \cdot \Pi_B^U$. Hence
there is an isomorphism
$$\Pi_B^U\otimes_{\tau} \Cp \cong \mathcal{B}(\chi_{1,\tau}|.|^{\frac{1}{2}}, \chi_{2,\tau}|.|^{\frac{1}{2}})^U.
$$
If $ \chi_{1,\tau}/\chi_{2,\tau} \neq |.|^{- 1}$, then $\pi_m(\sigma_{\kappa})
=\mathcal{B}(\chi_{1,\tau}|.|^{\frac{1}{2}}, \chi_{2,\tau}|.|^{\frac{1}{2}})$.
If $ \chi_{1,\tau}/\chi_{2,\tau} = |.|^{- 1}$ then $\pi_m(\sigma_{\kappa})
=\mathcal{B}(\chi_{2,\tau}|.|^{\frac{1}{2}}, \chi_{1,\tau}|.|^{\frac{1}{2}}).$
However, the semi-simplifications of $\mathcal{B}(\chi_{2,\tau}|.|^{\frac{1}{2}}, \chi_{1,\tau}|.|^{\frac{1}{2}})^U$ and $\mathcal{B}(\chi_{1,\tau}|.|^{\frac{1}{2}}, \chi_{2,\tau}|.|^{\frac{1}{2}})^U$  as $\mathcal{H}_U$-modules are isomorphic.
Hence they  the same trace.  We deduce that the specialisation to $\tau$ 
of the trace of $\mathcal{H}_U$ acting on $\Pi_B^U$ must equal the trace of $\mathcal{H}_U$
acting on $\pi_m(\sigma_{\kappa})^U$.  This is true of any $\tau$ lying over
$\kappa$, hence the trace is $Gal(L/K)$-equivariant so the image lies in
$K \cap B = A$.
\end{proof}

\subsection{Supercuspidal}
 If $\sigma_{\bar{K}} = (\rho_{\bar{K}}, 0)$, where
  $\rho_{\bar{K}}$ is irreducible then $\pi_m(\sigma_{\bar{K}})$ is supercuspidal.
By 2.2.1 \cite{T} we know that $\rho_{\bar{K}}$ is of the form $\nu \otimes
\chi$ where $\nu$ is an irreducible Weil representation of Galois type and $\chi$ is
an unramified character taking values in $\bar{K}$. Any irreducible Weil representation of Galois type on an algebraically closed field of characteristic zero necessarily
has finite image so is  defined over $\Cp$. Hence there is a Weil-Deligne
representation
$\sigma_{\Cp}$,
 defined over $\Cp$ such that  $\sigma_{\bar{K}} \cong (\sigma_{\Cp}\otimes
\bar{K})\otimes \chi$.
 Recall that the Tate normalisation
of local Langlands is compatible with base change and twisting.  Therefore,
$$\pi_m(\sigma_{\overline{K}}) = (\pi_m(\sigma_{\Cp})\otimes\overline{K})\otimes
\chi.$$
Although $\chi$ takes values in $\overline{K}$ we know that $\chi^2(\Q_l^*) \in
A^*$ by considering determinants.   Let $L$ be the splitting field of $x^2 - \chi(l)^2$
over $K$. Let $B$ be the integral closure of $A$ in $L$ and $R$ the integral
closure of $A$ in $K$.  $\chi$ is defined over $B$ and hence we can define the $B$-module $$\Pi_B:=(\pi_m(\sigma_{\Cp})\otimes
B)\otimes
\chi.$$
$\Pi_B$ is a free $B$-module because we are tensoring over a field.
\begin{lem}If $U \subset \glzl$ is an open compact subgroup then $\Pi_B^U$
is a finite free $B$-module. 
\end{lem}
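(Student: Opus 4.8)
The plan is to reduce the statement to the admissibility of the supercuspidal representation $\pi_m(\sigma_{\Cp})$. First I would unwind the definition: as a $B$-module, $\Pi_B$ is simply $\pi_m(\sigma_{\Cp}) \otimes_{\Cp} B$, with $\glql$ acting by $g\cdot(v\otimes b) = (gv)\otimes\chi(g)b$, where $\chi$ is regarded as a character of $\glql$ via $\det$ and the local reciprocity isomorphism. The key point is that $\chi$ is \emph{unramified}: the corresponding character of $\mathbb{Q}_l^*$ is trivial on $\mathbb{Z}_l^*$, and since $\det(\glzl) = \mathbb{Z}_l^*$ we get $\chi|_{\glzl} \equiv 1$. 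In particular $\chi|_U \equiv 1$ for every open compact $U \subset \glzl$, so the action of such a $U$ on $\Pi_B$ is exactly the action induced from $\pi_m(\sigma_{\Cp})$, base-changed to $B$.

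Granting this, I would observe that taking $U$-invariants commutes with the scalar extension $\Cp \to B$: choosing a $\Cp$-basis of $B$ exhibits $\pi_m(\sigma_{\Cp}) \otimes_{\Cp} B$ as a direct sum of copies of $\pi_m(\sigma_{\Cp})$ as $\Cp[U]$-modules, whence $\Pi_B^U = \pi_m(\sigma_{\Cp})^U \otimes_{\Cp} B$. Since $\pi_m(\sigma_{\Cp})$ is, by construction, an irreducible smooth (indeed supercuspidal) representation of $\glql$, it is admissible (\cite{JL}), so $d := \dim_{\Cp}\pi_m(\sigma_{\Cp})^U$ is finite. Therefore $\Pi_B^U \cong B^{\oplus d}$ is a finite free $B$-module.

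I do not expect a serious obstacle here; unlike the principal series case, where Lemma 20 could be read off the Iwasawa decomposition, the supercuspidal case has no such explicit model, so the one essential input is the admissibility of $\pi_m(\sigma_{\Cp})$ over $\Cp$. The only point that genuinely requires care is that the twist by $\chi$ leaves the $U$-invariants untouched, which is precisely where the unramifiedness of $\chi$ enters.
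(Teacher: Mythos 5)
Your proof is correct and follows essentially the same route as the paper: unramifiedness of $\chi$ makes the twist invisible to $U$, one checks that taking $U$-invariants commutes with the scalar extension $\Cp \rightarrow B$, and admissibility of the supercuspidal $\pi_m(\sigma_{\Cp})$ gives the finiteness. The only cosmetic difference is that you get the base-change compatibility by writing $B$ as a direct sum of copies of $\Cp$ and noting that invariants commute with direct sums, whereas the paper averages a $U$-invariant vector over cosets of a small open subgroup stabilising its components; you also make the admissibility input explicit, which the paper leaves implicit.
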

\begin{proof}
$\chi$ is unramified so is trivial on $det(U)$.  Hence we need only consider $(\pi_m(\sigma_{\Cp})\otimes
B)^U$.  Clearly we have the inclusion
$$ \pi_m(\sigma_{\Cp})^U\otimes
B \subset (\pi_m(\sigma_{\Cp})\otimes
B)^U.$$
The left hand side is free because we are taking the tensor product over
a field.  Let $v \in  (\pi_m(\sigma_{\Cp})\otimes
B)^U$.  It is the sum of finitely many components of form $x \otimes b$,
where $x \in \pi_m(\sigma_{\Cp})$ and $b \in B$.  Because $(\pi_m(\sigma_{\Cp})$
is smooth we can choose $V \subset U$, an open subgroup of index $n$, which stabilises each
component in $v$.  If $g_1$, $g_2$....$g_n$ are right coset representatives
for $V$ in $U$ then by definition:

$$ v = \frac{1}{n}\sum_{i=1}^n g_i(v).$$
The right hand side is the sum of components of form $$\frac{1}{n}(\sum_{i=1}^n
g_i(x)) \otimes b. $$  By construction the left hand term in this product
is invariant under $U$ and we deduce that $$ \pi_m(\sigma_{\Cp})^U\otimes
B = (\pi_m(\sigma_{\Cp})\otimes
B)^U.$$
\end{proof}
\begin{thm} If $tr:\mathcal{H}_U \rightarrow B$ is the trace of $\mathcal{H}_U$
acting on $\Pi_B^U$, then
it factors through $A$.  Moreover for any $\kappa \in Sp(A)(\Cp)$, the specialisation
of this map to $\kappa$ is the trace of $\mathcal{H}_U$ acting on $\pi_m(\sigma_{\kappa})^U$.
\end{thm}
\begin{proof}
Let $\tau \in Sp(B)(\Cp)$ be a point  lying over $\kappa$ defined by the
maximal ideal $m_{\tau}$.  By the Brauer-Nesbitt
theorem $\rho_{\kappa} \cong \nu \otimes \chi_{\tau}$. Hence,
$$\pi_m(\sigma_{\kappa}) = \pi_m(\sigma_{\Cp})\otimes
\chi_{\tau}.$$
There is a surjective morphism of  $\glql$-modules 

$$\xymatrix{\Pi_B  \ar[r] &\pi_m(\sigma_{\Cp})\otimes
\chi_{\tau},}$$
induced by reducing by $m_{\tau}$.  We get
the short exact sequence
$$\xymatrix{0 \ar[r] & m_{\tau} \Pi_B\ar[r] &\Pi_B\ar[r] & \pi_m(\sigma_{\kappa}) \ar[r] &0
}$$
The by now familiar group cohomology argument gives the short exact sequence  of $\mathcal{H}_U$-modules
$$\xymatrix{0 \ar[r] & (m_{\tau} \Pi_B)^U\ar[r] &\Pi_B^U\ar[r] & \pi_m(\sigma_{\kappa})^U \ar[r] &0.
}$$
Because everything is smooth the same argument as used in lemma 23 tells
us that$(m_{\tau} \Pi_B)^U
= m_{\tau} \Pi_B^U$.   We deduce there is an isomorphism of $\mathcal{H}_U$-modules
:
$$\Pi_B^U \otimes_{\tau} \Cp \cong \pi_m(\sigma_{\kappa})^U.$$
Hence the specialisation to $\tau$ 
of the trace of $\mathcal{H}_U$ acting on $\Pi_B^U$ must equal the trace of $\mathcal{H}_U$
acting on $\pi_m(\sigma_{\kappa})^U$.  This is true of any $\tau$ lying over
$\kappa$, hence the trace is $Gal(L/K)$-equivariant so the image lies in
$K \cap B = A$.
\end{proof}

\subsection{Special}
The final case to consider is when $\sigma$ has non-trivial monodromy.  This
is the indecomposable  case (4.1.5 \cite{T}) and over $K$ 
$$\sigma_{K} \cong   (\begin{pmatrix}\chi|.|^{-1} &0 \\
0 &\chi 
\end{pmatrix}, \begin{pmatrix}0 &1 \\
0 & 0
\end{pmatrix} )$$ for some $K$-valued quasicharacter $\chi$. By construction,
the trace and determinant of $\rho_{\bar{K}}$ are contained in $A$. Hence
$\chi$ takes values in  $A^*$.  By  definition $\sigma$ is defined over $A$.
This means that $N \in End_A(M)$, where $M$ is free of rank 2.  It is possible
that $N$ will vanish after specialising to some $\kappa \in Sp(A)(\Cp)$.
Over $K$ however we can always find a basis such that $N$ has the above form.
What this means its that generic behaviour (over $K$) is too crude to tell
us about local behaviour everywhere.  However, if $A$ is 1-dimensional then
then $N$ can only specialise to zero at finitely many $\Cp$-valued points
of $Sp(A)$. Let us denote this set $S \subset Sp(A)$. 
\\ \\
In this case,
$$\pi_m(\sigma_{\bar{K}}) = \pi(\chi|.|^{-\frac{1}{2}}, \chi|.|^{\frac{1}{2}}),$$
Where the right hand side is the unique irreducible quotient of the reducible
principal series $ \mathcal{B}(\chi|.|^{-\frac{1}{2}}, \chi|.|^{\frac{1}{2}})$. Define the  $A$-modules $$\Delta_A:=\{f\in\mathcal{B}(\chi|.|^{-\frac{1}{2}}, \chi|.|^{\frac{1}{2}})\;\; |\;\; f(g) \in A, \forall g \in \glql\},$$
and 
$$\Omega_A := \{f \in \Pi_A \;\; | f(g) = \chi(det(g))a \text{ for some } a \in A\}$$
Note that $\Omega_A \neq 0$.
We are interested in the $A$-module $\Pi_A:= \Delta_A/ \Omega_A$. Note
that \mbox{$\Delta_A \otimes_A \bar{K} \cong \pi_m(\sigma_{\bar{K}})$. }
Fix $U \vartriangleleft \glzl$, an open compact subgroup such that $det(U)
\subset ker(\chi)$.  Hence $U$ acts trivially on $\Omega_A$.
\begin{lem}$\Pi_A^U$ is a finite projective  $A$-module.
\end{lem}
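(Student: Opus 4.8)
The plan is to run the standard Dedekind-domain criterion: since $A$ is a one-dimensional, reduced, irreducible affinoid algebra it is a Dedekind domain, so a finitely generated $A$-module is projective if and only if it is torsion-free. I would therefore reduce the lemma to showing that $\Pi_A^U$ is (i) finitely generated over $A$ and (ii) torsion-free over $A$.

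For finite generation I would begin with the defining short exact sequence of $\Cp[U]$-modules $0\to\Omega_A\to\Delta_A\to\Pi_A\to 0$ and pass to continuous $U$-cohomology. Because $\det(U)\subset\ker(\chi)$ the group $U$ acts trivially on $\Omega_A$, which is moreover a $\Cp$-vector space (free of rank one over $A$); repeating the annihilation argument of lemma 16 — each $H^1(U/V,\Omega_A)$ is killed by the finite index $|U/V|$ but is a $\Cp$-vector space — gives $H_{con}^1(U,\Omega_A)=0$, so that $\Pi_A^U=\Delta_A^U/\Omega_A^U$. Next, exactly as in lemma 20, the Iwasawa decomposition $\glql=P\cdot\glzl$ shows that an element of $\mathcal{B}(\chi|.|^{-\frac12},\chi|.|^{\frac12})$ is determined by its restriction to $\glzl$, and the condition $\det(U)\subset\ker(\chi)$ is precisely what makes the defining transformation law consistent on each double coset, so $\Delta_A^U$ is a free $A$-module of rank $|P(\mathbb{Z}_l)\backslash\glzl/U|$. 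Hence $\Pi_A^U$ is a quotient of a finite free $A$-module and is finitely generated.

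The essential point — and the one step that is not purely formal, since a quotient of a free module over a Dedekind domain need not itself be torsion-free — is torsion-freeness. Here I would use the fact, established at the start of the Special subsection, that $\chi$ is valued in $A^*$, so $\chi(\det g)\in A^*$ for every $g\in\glql$. If $\bar f\in\Pi_A^U$ is killed by a nonzero $a\in A$, lift it to $f\in\Delta_A^U$; then $af\in\Omega_A$, i.e.\ $af(g)=\chi(\det g)\,b$ for some fixed $b\in A$, so $f(g)=\chi(\det g)(b/a)$, and evaluating at $g=1$ shows $b/a=f(1)\in A$; thus $f\in\Omega_A$ and $\bar f=0$. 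Combining the two properties, $\Pi_A^U$ is a finitely generated torsion-free module over the Dedekind domain $A$, hence finite projective. I expect the only delicate points to be the vanishing $H_{con}^1(U,\Omega_A)=0$ and the freeness of $\Delta_A^U$, but both follow the templates already in place in the paper (lemmas 16 and 20).
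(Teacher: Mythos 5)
Your proposal is correct and follows essentially the same route as the paper: identify $\Pi_A^U$ with $\Delta_A^U/\Omega_A$ (your vanishing of $H^1_{con}(U,\Omega_A)$ is just the cohomological packaging of the paper's explicit argument that the homomorphism $u\mapsto a_u$ from $U$ to $(A,+)$ is trivial), then use the Iwasawa-decomposition freeness of $\Delta_A^U$ and the Dedekind criterion (finitely generated plus torsion-free implies projective). The only difference is that you spell out the torsion-freeness of the quotient via evaluation at $g=1$ and $\chi(\det g)\in A^*$, a detail the paper merely asserts, so this is a welcome but not essentially different argument.
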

\begin{proof}
Let $f \in \Delta_A$ such that $f - u(f) \in \Omega_A$ for all $u \in U$.
Hence the image of $f$ in $\Pi_A$ is contained in $\Pi_A^U$.  This
is equivalent to the statement that for all $u \in U$, there exists $a_u
\in A$ such that $f(g)-f(gu) = \chi(det(g))a_u, \forall g \in \glql$.  This gives a function from
$\phi:U \rightarrow A$, $\phi(u) = a_u$. This is actually a group homomorphism
(with the additive group structure on A). Recall that by definition
$f$ is smooth, hence $\phi$ is trivial on some open subgroup of
$U$. $U$ is compact
so the image of $\phi$ is finite. But $A$ is a $\Cp$-vector space so $\phi$
is trivial.  Therefore $f \in \Delta_{A}^{U}$. By the same argument as lemma
21  $\Delta_{A}^{U}$ is free of rank
$|P(\mathbb{Z}_l)\backslash \glzl/ U|$.  Hence $\Pi_A^U$ is isomorphic 
to $\Delta_A^U / \Omega_A$, which is a finite, torsion free A-module, and
is hence projective. In fact it is  to be free of rank $|P(\mathbb{Z}_l)\backslash \glzl/ U|-1.$
\end{proof}
\begin{thm}
Let $\kappa \in Sp(A)$ and $U \in \glzl$ an open compact subgroup such that
$det(U)
\subset ker(\chi)$.  There is
an embedding of $\mathcal{H}_U$-modules 
$$\xymatrix{\Pi_A^U\otimes_{\kappa} \Cp \ar[r] &\pi_m(\sigma_{\kappa})^U}.$$
For $\kappa \notin S$, $\lambda$ is an isomorphism.  For $\kappa \in S$,  $\Pi_A^U\otimes_{\kappa} \Cp$ is the unique irreducible
$\mathcal{H}_U$ -submodule module of $\pi_m(\sigma_{\kappa})^U$corresponding to a Steinberg representation.  Hence if $tr:\mathcal{H}_U \rightarrow A$ is the trace of $\mathcal{H}_U$
acting on $\Pi_A^U$, then for any $\kappa \notin S$, the specialisation
of this map to $\kappa$ is the trace of $\mathcal{H}_U$ acting on $\pi_m(\sigma_{\kappa})^U$.
\end{thm}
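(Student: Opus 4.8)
The plan is to compute $\Pi_A^U \otimes_\kappa \Cp$ directly as an $\mathcal{H}_U$-module, to observe that it is always the $U$-invariants of a twist of the Steinberg representation, and then to read off the statement by comparing with $\pi_m(\sigma_\kappa)$ according to whether the monodromy operator specialises to zero at $\kappa$.

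I would begin from the exact sequence $0 \to \Omega_A \to \Delta_A \to \Pi_A \to 0$ and pass to $U$-invariants. Since $\det U \subset \ker\chi$, the subgroup $U$ acts trivially on $\Omega_A$, so $\Omega_A$ is free of rank $1$; the preceding lemma gives that $\Delta_A^U$ is free of rank $d := |P(\mathbb{Z}_l)\backslash \glzl/U|$ and that $\Pi_A^U \cong \Delta_A^U/\Omega_A$ is free of rank $d-1$, so that $\Omega_A$ is saturated in $\Delta_A^U$ and $0 \to \Omega_A \to \Delta_A^U \to \Pi_A^U \to 0$ is a split exact sequence of finite free $A$-modules; in particular it stays exact after $-\otimes_\kappa \Cp$. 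Next I would identify the reductions. Reduction of coefficients modulo $m_\kappa$ gives a $\glql$-equivariant surjection $\Delta_A \to \mathcal{B}(\chi_\kappa|.|^{-\frac{1}{2}}, \chi_\kappa|.|^{\frac{1}{2}})$ (using $\chi \bmod m_\kappa = \chi_\kappa$), which on $U$-invariants is an isomorphism $\Delta_A^U \otimes_\kappa \Cp \cong \mathcal{B}(\chi_\kappa|.|^{-\frac{1}{2}}, \chi_\kappa|.|^{\frac{1}{2}})^U$ — both have $\Cp$-dimension $d$ by the Iwasawa-decomposition count used for the analogous module in the Principal Series case, and surjectivity together with freeness of $\Delta_A^U$ forces the kernel to be $m_\kappa\Delta_A^U$. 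Under this map $\Omega_A$ goes onto the line spanned by $g \mapsto \chi_\kappa(\det g)$, the unique one-dimensional subrepresentation of $\mathcal{B}(\chi_\kappa|.|^{-\frac{1}{2}}, \chi_\kappa|.|^{\frac{1}{2}})$, so dividing out yields a canonical $\mathcal{H}_U$-isomorphism
$$\Pi_A^U \otimes_\kappa \Cp \;\cong\; \mathcal{B}(\chi_\kappa|.|^{-\frac{1}{2}}, \chi_\kappa|.|^{\frac{1}{2}})^U / \langle \chi_\kappa\circ\det \rangle \;=\; \pi(\chi_\kappa|.|^{-\frac{1}{2}}, \chi_\kappa|.|^{\frac{1}{2}})^U,$$
the $U$-invariants of the special representation with character $\chi_\kappa$, and this holds for every $\kappa \in Sp(A)(\Cp)$ irrespective of whether $N_\kappa = 0$.

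It then remains to match this with $\pi_m(\sigma_\kappa)$. If $\kappa \notin S$ then $N_\kappa \neq 0$, so $\sigma_\kappa$ is still the indecomposable special Weil--Deligne representation with character $\chi_\kappa$ and $\pi_m(\sigma_\kappa) = \pi(\chi_\kappa|.|^{-\frac{1}{2}}, \chi_\kappa|.|^{\frac{1}{2}})$; the isomorphism above is then the asserted $\lambda$, an isomorphism on $U$-invariants, and the trace statement follows from the functoriality of the trace of $\Pi_A^U$ under $A \to \Cp$ (legitimate since $\Pi_A^U$ is finite projective over $A$), exactly as in the Principal Series and Supercuspidal cases treated above — and, $\Pi_A$ being already defined over $A$, there is no Galois-descent step to perform here. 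If $\kappa \in S$ then $N_\kappa = 0$, so the $\Phi$-semisimplification of $\sigma_\kappa$ is $(\mathrm{diag}(\chi_\kappa|.|^{-1}, \chi_\kappa), 0)$, and by the definition of the modification $\pi_m(\sigma_\kappa) = \mathcal{B}(\chi_\kappa|.|^{\frac{1}{2}}, \chi_\kappa|.|^{-\frac{1}{2}})$: the reducible principal series \emph{containing} $\pi(\chi_\kappa|.|^{-\frac{1}{2}}, \chi_\kappa|.|^{\frac{1}{2}})$ as its unique irreducible subrepresentation, with $\chi_\kappa\circ\det$ as quotient. Composing the isomorphism of the previous step with this inclusion gives the required $\mathcal{H}_U$-embedding; since $\det U \subset \ker\chi_\kappa$ the one-dimensional quotient survives taking $U$-invariants, and $\pi_m(\sigma_\kappa)^U$ is an $\mathcal{H}_U$-module of length $2$ whose unique irreducible submodule is exactly the image $\pi(\chi_\kappa|.|^{-\frac{1}{2}}, \chi_\kappa|.|^{\frac{1}{2}})^U$ of $\lambda$.

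The main obstacle I anticipate is bookkeeping rather than depth: one must keep the two reducible principal series apart, since the reduction of $\Delta_A$ yields $\mathcal{B}(\chi_\kappa|.|^{-\frac{1}{2}}, \chi_\kappa|.|^{\frac{1}{2}})$ (Steinberg quotient, line $\chi_\kappa\circ\det$ as sub) while $\pi_m(\sigma_\kappa)$ at a monodromy-vanishing point is the oppositely oriented $\mathcal{B}(\chi_\kappa|.|^{\frac{1}{2}}, \chi_\kappa|.|^{-\frac{1}{2}})$ (Steinberg sub, line as quotient) — the quotient-by-$\Omega_A$ in the family lands in the Steinberg constituent common to both, which is exactly why $\lambda$ is only an embedding, not an isomorphism, at points of $S$. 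Keeping track of which twist of the Steinberg appears and checking that the relevant $U$-invariants are non-zero and behave correctly as $\mathcal{H}_U$-modules is where the care goes; once the freeness and dimension information from the preceding lemma and the Iwasawa count are in hand, the rest is a formal diagram chase.
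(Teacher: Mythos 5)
Your proposal is correct and follows essentially the same route as the paper: both specialise the family at $m_{\kappa}$, use the explicit double-coset freeness established in Lemma 25 to identify $\Pi_A^U\otimes_{\kappa}\Cp$ with $\pi(\chi_{\kappa}|.|^{-\frac{1}{2}},\chi_{\kappa}|.|^{\frac{1}{2}})^U$ for every $\kappa$, and then compare with $\pi_m(\sigma_{\kappa})$ according to whether $N$ vanishes at $\kappa$, identifying $\pi_m(\sigma_{\kappa})$ with $\mathcal{B}(\chi_{\kappa}|.|^{\frac{1}{2}},\chi_{\kappa}|.|^{-\frac{1}{2}})$ on $S$ and obtaining the embedding onto the Steinberg constituent. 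The only difference is cosmetic: you reduce $\Delta_A$ and divide by the image of $\Omega_A$ via the split sequence $0\rightarrow\Omega_A\rightarrow\Delta_A^U\rightarrow\Pi_A^U\rightarrow 0$, whereas the paper reduces $\Pi_A$ directly and invokes the basis argument from the proof of Lemma 25 together with the group-cohomology surjectivity step; the trace statement is handled the same way in both (finite projectivity of $\Pi_A^U$, with no Galois descent needed since everything is already defined over $A$).
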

\begin{proof} Let $m_{\kappa}$ be the maximal ideal defining $\kappa$. Specialising $\chi$
at $\kappa$ gives a surjective morphism of $\glql$-modules:

$$\xymatrix{\Pi_A \ar[r]  &\pi(\chi_{\kappa}|.|^{-\frac{1}{2}}, \chi_{\kappa}|.|^{\frac{1}{2}})}.$$
The usual group cohomology argument (see lemma 16) tells us that this induces a surjection
of $\mathcal{H}_U$-modules:
$$\xymatrix{\Pi_A^U \ar[r]  &\pi(\chi_{\kappa}|.|^{-\frac{1}{2}}, \chi_{\kappa}|.|^{\frac{1}{2}})^U}.$$
By the proof of lemma 25 we can find a basis of $\Pi_A^U$  whose image under
this map is a basis of $\pi_m(\sigma_{\kappa})^U$. We deduce that the kernel
is $m_{\kappa} \cdot \Pi_A^U$ and hence there is an isomorphism:
$$\Pi_A^U\otimes_{\kappa} \Cp \cong \pi(\chi_{\kappa}|.|^{-\frac{1}{2}}, \chi_{\kappa}|.|^{\frac{1}{2}})^U.$$

Assume $\kappa \notin S$.  In this case 
$$\sigma_{\kappa} \cong   (\begin{pmatrix}\chi_{\kappa}|.|^{-1} &0 \\
0 &\chi_{\kappa} 
\end{pmatrix}, \begin{pmatrix}0 &1 \\
0 & 0
\end{pmatrix} ).$$
Hence $\pi_m(\sigma_{\kappa})
= \pi(\chi_{\kappa}|.|^{-\frac{1}{2}}, \chi_{\kappa}|.|^{\frac{1}{2}})$ and
the above isomorphism becomes:
$$\Pi_A^U\otimes_{\kappa} \Cp \cong \pi_m(\sigma_{\kappa})^U.$$ The statement
about traces follows form the same arguments used in theorems 22 and 24.

Assume $\kappa \in S$.  In this case 
$$\sigma_{\kappa} \cong   (\begin{pmatrix}\chi_{\kappa}|.|^{-1} &0 \\
0 &\chi_{\kappa} 
\end{pmatrix},0 ),$$
and so $\pi_m(\sigma_{\kappa}) = \mathcal{B}(\chi_{\kappa}|.|^{\frac{1}{2}}, \chi_{\kappa}|.|^{-\frac{1}{2}})$.  This has $\pi(\chi_{\kappa}|.|^{-\frac{1}{2}}, \chi_{\kappa}|.|^{\frac{1}{2}})$ as its unique irreducible sub-representation
and hence there is an embedding 
$$\xymatrix{\Pi_A^U\otimes_{\kappa} \Cp \ar[r] &\pi_m(\sigma_{\kappa})^U}.$$
\end{proof}

\subsection{Local-Global Compatibility}
Using the constructions of the previous paragraph we will prove an analogous
result to Theorem 18 on the Galois side.

As indicated in \S5.1 there is an admissible affinoid cover of $\mathcal{E}^o$
(taken over $\Qp$)
such that each element of this cover comes equipped with a global Galois
representation.  Restricting to $\wl$ we may apply proposition 19 to each
element of this cover to get a family of 2 dimensional Weil-Deligne represention
across $\mathcal{E}^o$.  Base change to $\Cp$ and then consider a normalisation
which we denote $\widetilde{\mathcal{E}^o}$.  By the above construction it
has an  admissible affinoid cover which we also denote $\mathcal{U}$ (each member being 1-dimensional, connected,
irreducible and smooth), which has the property that for $Y=Sp(A) \in \mathcal{U}$
there is a Weil-Deligne representation over $A$, which agree on intersections.
$Y \in \mathcal{U}$ implies that $A$ is a Dedekind domain.  Hence the results
of \S 5.4 are applicable and we may apply the local Langlands correspondence
to each of these families of Weil-Deligne representations. To any $\kappa
\in \widetilde{\mathcal{E}^o}(\Cp)$ we therefore have a Weil-Deligne representation
$(\rho_{\kappa}, N_{\kappa})$ (over $\Cp$) and consequently an admissible, smooth representation
of $\glql$, $\pi_m(\rho_{\kappa}, N_{\kappa})$.

\begin{prop}
If $\widetilde{\mathcal{Z}} \subset \widetilde{\mathcal{E}^o}$ is a connected
component then for $\kappa \in \widetilde{\mathcal{Z}}(\Cp)$, $\pi_m(\rho_{\kappa}, N_{\kappa})$ is either generically (away from a discrete set) supercuspidal,
principal series or special.
\end{prop}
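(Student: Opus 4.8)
The plan is to reduce to a single member of the admissible affinoid cover $\mathcal{U}$ of $\widetilde{\mathcal{E}^o}$, use the trichotomy of \S5.4 to attach to that member a well-defined ``generic type'', and then propagate this type across the whole component $\widetilde{\mathcal{Z}}$ by a connectedness argument over the overlaps of the cover.

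First I would fix $Y=Sp(A)\in\mathcal{U}$ with $Y\subseteq\widetilde{\mathcal{Z}}$; since $Y$ is connected it is either contained in the component $\widetilde{\mathcal{Z}}$ or disjoint from it, so the members of $\mathcal{U}$ lying in $\widetilde{\mathcal{Z}}$ form an admissible cover of it. Here $A$ is a Dedekind domain; let $K$ be its fraction field. Applying the construction of Weil--Deligne representations in families from \S5.2 to $\rho_Y|_{\wl}$ gives $\sigma_Y=(\rho_{Y,l},N)$ over $A$, hence its $\Phi$-semisimplification $\sigma_{\overline{K}}$ over $\overline{K}$. As recalled at the start of \S5.4, $\sigma_{\overline{K}}$ lies in exactly one of three classes --- principal series, supercuspidal, or special --- the degenerate reducible case (ratio of the two characters equal to $|.|^{\pm 1}$) being excluded because, as noted in \S5.5, it cannot occur on $\mathcal{E}^o$. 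This assigns to $Y$ a generic type, and the crucial point is that Theorems 22, 24 and 26 say precisely that there is a \emph{finite} subset $S_Y\subseteq Y(\Cp)$ such that for $\kappa\notin S_Y$ the representation $\pi_m(\rho_\kappa,N_\kappa)$ is irreducible of exactly that generic type (an irreducible full principal series, a supercuspidal, or a twist of Steinberg). In the principal series case $S_Y$ is the locus where the ratio of Satake parameters meets $l^{\pm 1}$; in the supercuspidal case $S_Y$ is empty; in the special case $S_Y$ is the set $S$ of \S5.7 where $N$ specialises to zero.

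Next I would glue. Let $Y_1,Y_2\in\mathcal{U}$ be contained in $\widetilde{\mathcal{Z}}$ with $Y_1\cap Y_2\neq\emptyset$. Then $Y_1\cap Y_2$ is a non-empty admissible open in a smooth connected rigid curve, so it contains infinitely many $\Cp$-points; choosing one, $\kappa$, outside the finite sets $S_{Y_1}$ and $S_{Y_2}$, the single irreducible representation $\pi_m(\rho_\kappa,N_\kappa)$ is simultaneously of the generic type of $Y_1$ and of $Y_2$. Since an irreducible supercuspidal, an irreducible full principal series and an irreducible twist of Steinberg are pairwise non-isomorphic (distinct supercuspidal supports; and only the first and third are discrete series), the generic types of $Y_1$ and $Y_2$ coincide. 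Because $\widetilde{\mathcal{Z}}$ is connected, the members of $\mathcal{U}$ contained in it cannot be split into two non-empty subfamilies no member of one meeting a member of the other, so any two are joined by a finite chain of pairwise-intersecting members; hence the generic type is constant on $\widetilde{\mathcal{Z}}$. Finally $S:=\bigcup_{Y\subseteq\widetilde{\mathcal{Z}}}S_Y$ is discrete, since any affinoid subdomain of $\widetilde{\mathcal{Z}}$ is quasi-compact, hence covered by finitely many members of $\mathcal{U}$, and so meets $S$ in a finite set. For $\kappa\in\widetilde{\mathcal{Z}}(\Cp)\setminus S$ the representation $\pi_m(\rho_\kappa,N_\kappa)$ has this common type, which is the assertion.

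I expect the main obstacle to be ensuring that the generic type of a member $Y$ is genuinely detected at \emph{all but finitely many} $\Cp$-points, not merely on a Zariski-dense set: this is exactly the content of Theorems 22, 24 and 26, but one must use their full force (the explicit identification of $\pi_m(\sigma_\kappa)$, not just equality of traces with $\mathcal{H}_U$) so that at a good point the representation is irreducible of a determined kind, which is what makes the ``incompatible at a common point'' comparison in the gluing step force the generic types to agree. A secondary, routine point to check is the exhaustiveness of the \S5.4 trichotomy for two-dimensional $\Phi$-semisimple Weil--Deligne representations over $\overline{K}$, together with the exclusion of the degenerate principal series case on $\mathcal{E}^o$.
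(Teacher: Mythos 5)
Your proposal is correct and follows essentially the same route as the paper: cover $\widetilde{\mathcal{Z}}$ by members of $\mathcal{U}$, attach a generic type to each member via the trichotomy of \S 5.4, force agreement of types on overlapping members, and propagate by the chain characterisation of connected components. The only (harmless) difference is that you justify agreement on overlaps by evaluating at a single common point outside the finite bad loci of Theorems 22, 24 and 26, whereas the paper argues directly that the families of representations agree on the (affinoid) intersection and hence have the same generic type.
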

\begin{proof}
As in \cite{BC}, for $\kappa \in \widetilde{\mathcal{E}^o}(\Cp)$ the connected
component containing $\kappa$ is defined to the set of points $z \in \widetilde{\mathcal{E}^o}(\Cp)$
such that there is a finite set $Z_1,...,Z_n$ of connected admissible affinoid
opens in $\widetilde{\mathcal{E}^o}$ with $\kappa \in Z_1$, $z\in Z_n$ and
$Z_i \cap Z_{i+1}$ non empty for all $i$.  

Because any connected component $\widetilde{\mathcal{Z}} \subset\widetilde{\mathcal{E}^o}$ is an admissible
open there is a subset of $\mathcal{U}$ which admissibly covers $\widetilde{\mathcal{Z}}$.
On any element of this cover there is a family of admissible smooth representations
of $\glql$.  By construction they must agree on intersections.  $\widetilde{\mathcal{Z}}$
is separated so the intersection of two admissible connected affinoid opens
is again connected and affinoid. If two open affinoid subdomains have non-trivial
intersection in $\widetilde{\mathcal{Z}}$ then they must generically have
the same type of $\glql$ representation. Hence by the definition of a connected
component $\widetilde{\mathcal{Z}}$ must either be generically supercuspidal,
principal series or special.
\end{proof}
Let us follow to the conventions of \S4 fixing a Haar measure on $\glql$. Let $\mathcal{H}_U$ denote the Hecke algebra
of compactly supported, $U$ bi-invariant functions on $\glql$ with coefficients
in $\Cp$.    
Fix $U\vartriangleleft \glzl$, an open compact subgroup such that $U_1(N)^{l} \times U \subset U_1(N)$. This implies that $\pi_m(\rho_{\kappa}, N_{\kappa})^U$
is generically non-zero.

We will now prove the analogue of Theorem 18 on the Galois side.
\begin{thm}There is a map
$$\xymatrix{ tr_{Lan}:\mathcal{H}_U
\ar[r]
&O(\widetilde{\mathcal{E}^o})},$$
such that if  $\widetilde{\mathcal{Z}} \subset\widetilde{\mathcal{E}^o}$
is a connected component then either 

\begin{enumerate}
\item $\widetilde{\mathcal{Z}}$ is supercuspidal and for all $\kappa \in\widetilde{\mathcal{Z}}(\Cp)$, $tr_{Lan}$ restricted to $\kappa$ is the trace of $\mathcal{H}_U$  acting
on $\pi_m(\rho_{\kappa}, N_{\kappa})^U$.
\item $\widetilde{\mathcal{Z}}$ is special and for  $\kappa \in\widetilde{\mathcal{Z}}(\Cp)$
such that monodromy does not vanish $tr_{Lan}$, restricted to $\kappa$ is the trace of $\mathcal{H}_U$  acting
on $\pi_m(\rho_{\kappa}, N_{\kappa})^U$.  When monodromy does vanish,  $tr_{Lan}$ restricted to $\kappa$ is the trace of $\mathcal{H}_U$ acting on the $U$ fixed
points of the special subrepresentation of $\pi_m(\rho_{\kappa}, N_{\kappa})$.

\item $\widetilde{\mathcal{Z}}$ is Principal Series and and for all $\kappa \in\widetilde{\mathcal{Z}}(\Cp)$, $tr_{Lan}$ restricted to $\kappa$ is the trace of $\mathcal{H}_U$  acting
on $\pi_m(\rho_{\kappa}, N_{\kappa})^U$.
\end{enumerate}
\end{thm}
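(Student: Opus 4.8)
The plan is to build $tr_{Lan}$ locally on the cover $\mathcal{U}$ of $\widetilde{\mathcal{E}^o}$ out of the three trace constructions of \S5.5--\S5.7 and then glue. Fix $Y=Sp(A)\in\mathcal{U}$. Since $A$ is a Dedekind domain it is a domain, so there is a unique generic point; writing $K$ for its fraction field we form $\sigma_{\overline{K}}$ as in \S5.4. By Proposition 27 this Weil--Deligne representation has exactly one of the three types, and (by the argument in the proof of that proposition) the type is constant along the connected component of $\widetilde{\mathcal{E}^o}$ containing $Y$. According as this type is supercuspidal, principal series or special I would apply Theorem 24, Theorem 22 or Theorem 26 to the Weil--Deligne representation over $A$ attached to $Y$, obtaining an $\mathcal{H}_U$-linear trace map $tr_Y:\mathcal{H}_U\rightarrow A=O(Y)$: it comes from the action of $\mathcal{H}_U$ on $\Pi_A^U$ (special case) or on $\Pi_B^U$ (the other two, where Theorems 22 and 24 show the resulting trace $\mathcal{H}_U\rightarrow B$ descends to $A$), and each of those theorems also records the specialisation of $tr_Y$ at every $\kappa\in Y(\Cp)$.

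Next I would glue the $tr_Y$. For $Y,Y'\in\mathcal{U}$ with $Y\cap Y'\neq\emptyset$ the intersection is affinoid (as $\widetilde{\mathcal{E}^o}$ is separated) and, by Proposition 27, $Y$ and $Y'$ share a generic type; each connected component $Z$ of $Y\cap Y'$ is a one-dimensional integral affinoid, and on $Z$ the rigid functions $tr_Y(h)|_Z$ and $tr_{Y'}(h)|_Z$ agree outside a finite subset --- away from the monodromy-vanishing loci, which are empty in the supercuspidal and principal series cases --- since there both specialise at $\kappa$ to the trace of $h$ on $\pi_m(\rho_\kappa,N_\kappa)^U$. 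Agreeing on a Zariski-dense set, they coincide on all of $Z$. Hence the $tr_Y(h)$ patch to a global $tr_{Lan}(h)\in O(\widetilde{\mathcal{E}^o})$ and $h\mapsto tr_{Lan}(h)$ is the required map; this is the density/patching mechanism already used for $tr_{aut}$ in Theorem 18.

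It remains to read off the trichotomy for a connected component $\widetilde{\mathcal{Z}}$. If $\widetilde{\mathcal{Z}}$ is supercuspidal, Theorem 24 says each local $tr_Y$ specialises to the trace on $\pi_m(\rho_\kappa,N_\kappa)^U$ at \emph{every} $\kappa$, giving (i). If $\widetilde{\mathcal{Z}}$ is principal series, Theorem 22 gives the same at every $\kappa$ --- the potentially degenerate specialisations with $\chi_{1,\tau}/\chi_{2,\tau}=|.|^{-1}$ only interchange the two constituents of a principal series, which does not change the $\mathcal{H}_U$-trace --- giving (iii). If $\widetilde{\mathcal{Z}}$ is special, Theorem 26 gives the trace on $\pi_m(\rho_\kappa,N_\kappa)^U$ for $\kappa$ outside the discrete set where monodromy vanishes, and at those points it gives the trace on the $U$-invariants of the unique irreducible (Steinberg) subrepresentation of $\pi_m(\rho_\kappa,N_\kappa)$, i.e.\ its special subrepresentation; this is (ii).

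\textbf{Main obstacle.} The real content beyond invoking \S5.5--\S5.7 is the gluing, and within it the special case: at a monodromy-vanishing $\kappa$ a local $tr_Y$ records only the special subrepresentation rather than all of $\pi_m(\rho_\kappa,N_\kappa)^U$, so one must check that two overlapping special pieces still agree there. This is forced because those loci are discrete (last paragraph of \S5.7, Theorem 26), so agreement on the complementary dense locus propagates, exactly as for $tr_{aut}$. A subsidiary point is that Proposition 27 is precisely what makes ``supercuspidal / principal series / special'' a well-defined attribute of a connected component rather than of an individual affinoid of $\mathcal{U}$, legitimising the case division in the statement.
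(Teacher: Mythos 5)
Your proposal is correct and follows essentially the same route as the paper: construct the trace locally on each $Y=Sp(A)\in\mathcal{U}$ via Theorems 22, 24 and 26 according to the generic type (well defined on connected components by Proposition 27), glue using agreement on overlaps forced by generic/Zariski-dense agreement, and read off the three cases, with the monodromy-vanishing points handled exactly as in Theorem 26 via the construction of $\Pi_A$. Your write-up merely spells out the gluing on components of intersections in slightly more detail than the paper does.
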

\begin{proof}
Parts (i) and (iii) are direct from Theorems 22 and 24 applied to each the
admissible cover of $\widetilde{\mathcal{Z}}$ described in the proof of proposition
27.  For part (ii) let $Y=Sp(A) \in \mathcal{U}$ be contained in $\widetilde{\mathcal{Z}}$.
Keeping the notation of \S5.7 we may construct $tr_{Lan}$ by gluing together
the trace functions of $\mathcal{H}_U$ acting on $\Pi_A^U$.  We know that
they agree on intersections because they must do generically.  For $\kappa
\in \widetilde{\mathcal{Z}}(\Cp)$ where monodromy doesn't vanish we are done
by theorem 26.  The case where monodromy vanishes is due to the construction
of $\Pi_A$ and our choice of normalisation of the local Langlands correspondence.
\end{proof}
There is a natural map from $\widetilde{\mathcal{E}^o}$ to $\widetilde{\mathcal{E}}$. Hence by pullback there is a map $$\xymatrix{ tr_{aut}:\mathcal{H}_U
\ar[r]
&O(\widetilde{\mathcal{E}^o})},$$ which satisfies the same properties as given in theorem 18.

\begin{thm} $tr_{aut} = tr_{Lan}$
\end{thm}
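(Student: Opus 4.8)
The plan is to regard $tr_{aut}(h)$ and $tr_{Lan}(h)$, for each fixed $h \in \mathcal{H}_U$, as two global rigid analytic functions on $\widetilde{\mathcal{E}^o}$ and to prove they coincide by comparing them on a Zariski dense subset. Since $\widetilde{\mathcal{E}^o}$ is a normalisation, each of its connected components $\widetilde{\mathcal{Z}}$ is smooth and irreducible, so a rigid function on $\widetilde{\mathcal{Z}}$ whose zero locus (an analytic subset) contains a Zariski dense subset must vanish identically. It therefore suffices, for every $h$ and every connected component $\widetilde{\mathcal{Z}}$, to produce a Zariski dense subset of $\widetilde{\mathcal{Z}}$ on which $tr_{aut}(h)$ and $tr_{Lan}(h)$ agree.

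For the dense set I would take the classical cuspidal locus of $\mathcal{E}^o$, which is Zariski dense by \S5.1 and, being dense in each irreducible component, pulls back to a subset of $\widetilde{\mathcal{E}^o}$ meeting each connected component $\widetilde{\mathcal{Z}}$ in a Zariski dense set. From this set I would delete the discrete bad set $S$ appearing in Theorem 18 (the points where the specialisation map of Proposition 17 fails to be an isomorphism). Removing a discrete set from a Zariski dense subset of a smooth irreducible curve leaves it Zariski dense, so the resulting set $\widetilde{\mathcal{C}}'$ of classical points in $\widetilde{\mathcal{Z}}$ is still Zariski dense.

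Now fix $\kappa \in \widetilde{\mathcal{C}}'$ with associated eigenform $f_\kappa$. By Coleman's classicality criterion $f_\kappa$ is a genuine classical cuspidal eigenform; by Propositions 5 and 6 together with the remarks in \S3.4 and \S4, the representation $\pi_{f_\kappa,l}$ coincides with the local component at $l$ of the classical automorphic representation attached to $f_\kappa$; and by the compatibility of the family construction of \S5.2 with pointwise specialisation (combined with the fact that the universal pseudocharacter specialises at $\kappa$ to the trace of $\rho_{f_\kappa}$), the Weil--Deligne representation $(\rho_\kappa,N_\kappa)$ is the classical one $\sigma_{f_\kappa,l}$ attached to $\rho_{f_\kappa}|_{\wl}$. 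Since $f_\kappa$ is cuspidal, $\pi_{f_\kappa,l}$ is infinite dimensional and irreducible, hence generic, so $\sigma_{f_\kappa,l}$ is not the degenerate reducible representation $(\,\mathrm{diag}(\chi,\chi|.|^{-1}),0\,)$; consequently $\kappa$ lies neither on the monodromy-vanishing locus of a special component nor on the Satake-ratio locus of a principal series component, and $\pi_m(\sigma_{f_\kappa,l}) = \pi_t(\sigma_{f_\kappa,l})$. Classical local--global compatibility (Carayol, \S5.3) gives $\pi_t(\sigma_{f_\kappa,l}) \cong \pi_{f_\kappa,l}$, hence $\pi_m(\rho_\kappa,N_\kappa) \cong \pi_{f_\kappa,l}$. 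Applying Theorem 18 (using $\kappa \notin S$) and the preceding theorem, both $tr_{aut}(h)(\kappa)$ and $tr_{Lan}(h)(\kappa)$ equal the trace of $h$ on $\pi_{f_\kappa,l}^U$, so they are equal. As $\widetilde{\mathcal{C}}'$ is Zariski dense in $\widetilde{\mathcal{Z}}$ and $\widetilde{\mathcal{Z}}$ and $h$ were arbitrary, we conclude $tr_{aut} = tr_{Lan}$.

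The step I expect to be the main obstacle is the third one: pinning down that at a classical weight the \emph{overconvergent} objects $\pi_{f_\kappa,l}$ and $(\rho_\kappa,N_\kappa)$ reduce precisely to the classical objects to which Carayol's theorem applies --- i.e.\ that the weight-$\phi$ action of \S3.4 really restricts to the classical $\glql$-action on classical forms, and that the monodromy-in-families construction of \S5.2 commutes with specialisation at $\Cp$-points of classical weight --- together with the observation that cuspidality forces the non-degenerate branch of the correspondence, so that $\pi_m$ and $\pi_t$ agree there and the classical points automatically avoid the Galois-side exceptional loci. Granting these identifications, the remainder is the formal density argument above.
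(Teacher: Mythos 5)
Your proposal is correct and follows essentially the same route as the paper: classical points are Zariski dense in each connected component of $\widetilde{\mathcal{E}^o}$, at such points (away from the discrete bad set) Theorems 18 and 28 identify the specialisations of $tr_{aut}$ and $tr_{Lan}$ with traces on $\pi_{f_{\kappa},l}^U$ and $\pi_m(\rho_{\kappa},N_{\kappa})^U=\pi_t(\rho_{\kappa},N_{\kappa})^U$ (genericity ruling out the degenerate loci), and Carayol's classical local--global compatibility forces agreement, which then propagates by Zariski density. The only cosmetic difference is that you argue directly on a connected component via the zero locus of the difference, whereas the paper first compares on an affinoid member of the cover and then spreads out using lemma 2.1.4 of \cite{BC}.
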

\begin{proof} It is enough to show that they agree on on connected components.
Coleman's classicality result tells us that there are a Zariski dense set
of classical points in any connected component $\widetilde{\mathcal{Z}} \subset\widetilde{\mathcal{E}^o}$.
Let $Y=Sp(A) \subset \widetilde{\mathcal{Z}}$ be a member of $\mathcal{U}$,
which contains a classical point.  Because classical points are never discrete
it must contain infinitely many such points.  By theorem 18 and theorem 28
there are a zariski dense set of classical points, $\kappa \in Y(\Cp)$, such
that $tr_{Lan}$ and $tr_{aut}$ restricted to $\kappa$  are the trace functions
of $\mathcal{H}_U$ acting on $\pi_m(\rho_{\kappa}, N_{\kappa})^U = \pi_t(\rho_{\kappa}, N_{\kappa})^U$ (classical forms always give generic representations) and
$\pi_{f_{\kappa},l}^U$ respectively. Here $f_{\kappa}$ is the cuspidal eigenform
corresponding to $\kappa$.  By classical local to global compatibility we
know that these two $\mathcal{H}_U$ modules are isomorphic, hence traces
must agree.  We deduce that on $Y$, $tr_{Lan}$ and $tr_{aut}$ agree on a
zariski dense set, and hence are equal.  By lemma 2.1.4 of \cite{BC} they must agree on
all of $\widetilde{\mathcal{Z}}$.  Hence we deduce that $tr_{Lan} = tr_{aut}$.
\end{proof}
Let $\mathcal{Z}$ be the irreducible component of $\mathcal{E}^o$ which is
the image of $\widetilde{\mathcal{Z}}$ under the natural projection of $\widetilde{\mathcal{E}^o}$
to $\mathcal{E}^o$. Let us also fix $U$ as above but together with the property
that it is \textit{bonne} in the sense of \cite{CB}. This property means
that that there is an equivalence of categories between finite vector spaces
over $\Cp$ with an action of $\mathcal{H}_U$ (such objects are determined
up to semi-simplification by traces) and admissible smooth representations
of $\glql$ over $\Cp$.  The equivalence is given by taking $U$ invariants.
Such subgroups form a basic for neighbourhoods of the identity so we may
choose such a subgroup which also satisfies the condition  $U_1(N)^{l} \times U \vartriangleleft U_1(N)$.  Fix such a $U$.  This condition implies that $\pi_{f_{\kappa},l}^U$
is non zero for all $\kappa \in \widetilde{\mathcal{E}^o}(\Cp)$.  We now come to our
central result.
\begin{thmAB*}
Away from a discrete set of points local to global compatibility holds on
$\mathcal{E}^o$, i.e.
$$\pi_m(\rho_{\kappa}, N_{\kappa}) \cong \pi_{f_{\kappa},l}$$ for all $\kappa
\in \mathcal{E}^o(\Cp)$ away from some discrete set.  More precisely if $\mathcal{Z}
\subset \mathcal{E}^o$ is an irreducible component then either:
\begin{enumerate}
\item $\mathcal{Z}$ is Supercuspidal (i.e.$\widetilde{\mathcal{Z}}$ is Supercuspidal)
In this case local to global compatibility holds on all $\mathcal{Z}$.
\item $\mathcal{Z}$ is Special (i.e.$\widetilde{\mathcal{Z}}$ is Special). In this case local
to global compatibility holds everywhere except at points where monodromy
vanishes.  For such $\kappa$, $\pi_{f_{\kappa},l}$ is the unique special irreducible sub-representation of $\pi_m(\rho_{\kappa}, N_{\kappa})$.
\item $\mathcal{Z}$ is Principal Series (i.e.$\widetilde{\mathcal{Z}}$ is Principal Series).  Here local to global compatibility holds except at points where
the ratio of the Satake parameters becomes $l^{ \pm 1}$.  At such points all
we know is that these is a smooth admissible representation $\pi$ and a $\glql$-module
surjection $\pi \rightarrow   \pi_{f_{\kappa},l}$ where the semisimplification of
$\pi$ if isomorphic to the semisimplification of $\pi_m(\rho_{\kappa}, N_{\kappa})$.
\end{enumerate}
\end{thmAB*}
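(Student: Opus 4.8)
The plan is to deduce the statement from the identity $tr_{aut}=tr_{Lan}$ of Theorem 29 together with the explicit descriptions of these two functions in Theorems 18 and 28, converting equalities of traces into statements about representations by means of the \emph{bonne} hypothesis on $U$. Recall that, $U$ being bonne, the functor of $U$-invariants is an exact equivalence between admissible smooth $\glql$-representations over $\Cp$ and finite-dimensional $\mathcal{H}_U$-modules: under it a module is pinned down up to semisimplification by its trace, composition factors correspond, and surjections correspond to surjections. So it is enough to run the comparison at the level of $\mathcal{H}_U$-modules and then translate back.

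Fix an irreducible component $\mathcal{Z}\subset\mathcal{E}^o$, let $\widetilde{\mathcal{Z}}\subset\widetilde{\mathcal{E}^o}$ be its normalisation, and pick $Y=Sp(A)\in\mathcal{U}$ inside $\widetilde{\mathcal{Z}}$ with eigenform $f_Y$. For $\kappa\in Y(\Cp)$ put $M_\kappa:=\pi_{f_Y,l}^U\otimes_\kappa\Cp$. Since $\pi_{f_Y,l}^U$ is finite projective over $A$ (Lemma 14), trace commutes with the base change $\kappa$ (the functoriality of the trace discussed after Lemma 14), so the trace of $\mathcal{H}_U$ on $M_\kappa$ is $tr_{aut}(\kappa)$, hence is $tr_{Lan}(\kappa)$ by Theorem 29; working with the projective module upstairs in this way sidesteps the discrete set $S$ of Theorem 18. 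Lemma 16 supplies a surjection of $\mathcal{H}_U$-modules $M_\kappa\twoheadrightarrow\pi_{f_\kappa,l}^U$ with non-zero target, our choice of $U$ guaranteeing $\pi_{f_\kappa,l}^U\neq 0$. Finally Theorem 28 identifies $tr_{Lan}(\kappa)$ as the trace of $\mathcal{H}_U$ on $\pi_m(\rho_\kappa,N_\kappa)^U$ in the supercuspidal and principal-series cases and in the special case when $N_\kappa\neq 0$, and as the trace on the $U$-invariants of the special irreducible subrepresentation of $\pi_m(\rho_\kappa,N_\kappa)$ when $\mathcal{Z}$ is special and $N_\kappa=0$.

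The conclusion is then forced case by case. Whenever $\pi_m(\rho_\kappa,N_\kappa)$ is irreducible --- which by \S5.5--\S5.7 happens for every $\kappa$ on a supercuspidal component, for $\kappa$ on a special component with $N_\kappa\neq 0$, and for $\kappa$ on a principal-series component where the ratio of the Satake parameters is not $l^{\pm1}$ --- the $\mathcal{H}_U$-module $\pi_m(\rho_\kappa,N_\kappa)^U$ is simple, so, having the same trace, $M_\kappa$ has composition length one and is isomorphic to it; a non-zero quotient then forces $\pi_{f_\kappa,l}^U\cong\pi_m(\rho_\kappa,N_\kappa)^U$, i.e. $\pi_{f_\kappa,l}\cong\pi_m(\rho_\kappa,N_\kappa)$. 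This yields (i) outright and the non-exceptional part of (ii) and (iii). When $\mathcal{Z}$ is special and $N_\kappa=0$, the same argument run with the simple $\mathcal{H}_U$-module $\Pi_A^U\otimes_\kappa\Cp$ (which by Theorem 26 is the $U$-invariants of the special subrepresentation) identifies $\pi_{f_\kappa,l}$ with that special irreducible subrepresentation of $\pi_m(\rho_\kappa,N_\kappa)$, which is (ii). When $\mathcal{Z}$ is principal series and the ratio of the Satake parameters is $l^{\pm1}$, $\pi_m(\rho_\kappa,N_\kappa)$ is reducible but Theorem 22 still computes the trace of $\mathcal{H}_U$ on its whole semisimplification; letting $\pi$ be the admissible smooth representation with $\pi^U=M_\kappa$, we get $\pi^{\mathrm{ss}}\cong\pi_m(\rho_\kappa,N_\kappa)^{\mathrm{ss}}$, while $M_\kappa\twoheadrightarrow\pi_{f_\kappa,l}^U$ transports through the bonne equivalence to a $\glql$-equivariant surjection $\pi\twoheadrightarrow\pi_{f_\kappa,l}$, which is (iii).

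For the first, unrefined assertion one collects the exceptional loci: empty on supercuspidal components; the zero locus of the monodromy $N$ on special components (finite on each member of $\mathcal{U}$, since $N$ is not identically zero there by the indecomposability in \S5.7); and the locus $\{\chi_{1,\kappa}/\chi_{2,\kappa}=|.|^{\pm1}\}$ on principal-series components (likewise proper, hence discrete). These are discrete on each $\widetilde{\mathcal{Z}}$, and as the irreducible components of the curve $\widetilde{\mathcal{E}^o}$ form a locally finite family their union is discrete, with discrete image in $\mathcal{E}^o$ under the finite normalisation map; away from it $\pi_m(\rho_\kappa,N_\kappa)\cong\pi_{f_\kappa,l}$. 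The one genuinely delicate point in the whole argument is the bookkeeping in the two reducible cases: one must be sure the non-zero quotient $\pi_{f_\kappa,l}^U$ of $M_\kappa$ is supported on the special (never the one-dimensional) composition factor, and this is exactly what the constructions of $\Pi_A=\Delta_A/\Omega_A$ in \S5.7 and of $\pi_m$ in \S5.3 are designed to provide; everything else is formal, given Theorems 18, 28 and 29 and the properties of a bonne subgroup.
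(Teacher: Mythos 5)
Your proposal is correct and follows essentially the same route as the paper: specialise the projective module $\pi_{f_Y,l}^U$ at $\kappa$, use $tr_{aut}=tr_{Lan}$ (Theorem 29) together with Theorems 18, 22, 24, 26 and 28 to identify its trace with that of $\pi_m(\rho_\kappa,N_\kappa)^U$ (or of the special subrepresentation's $U$-invariants), then push the surjection of Proposition 17 onto the non-zero $\pi_{f_\kappa,l}^U$ through the \emph{bonne} equivalence, case by case. Your explicit handling of the discrete set $S$ via the base-changed module $M_\kappa$ and the collection of the exceptional loci are just slightly more careful bookkeeping of steps the paper leaves implicit.
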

\begin{proof}  Let us fix an irreducible component $\mathcal{Z}
\subset \mathcal{E}^o$.  In proving the result it will be enough to do it
on $\widetilde{\mathcal{E}^o}$, which surjects onto $\mathcal{E}^o$.  We will go through each case in turn.
\\ \\
In case (i) for any $\kappa \in \widetilde{\mathcal{Z}}(\Cp)$, we know by theorem
28 that $\pi_m(\rho_{\kappa}, N_{\kappa})^U$ is irreducible because $U$ is
\textit{bonne} and $\pi_m(\rho_{\kappa}, N_{\kappa})$ is irreducible.  By theorem 29 we know that the trace of $tr_{aut}$ restricted
to $\kappa$ is the trace of this irreducible $\mathcal{H}_U$ module.  We
know by proposition 17 that this irreducible module surjects ($\mathcal{H}_U$
equivariently) onto  $\pi_{f_{\kappa},l}^U$.  Hence they must be isomorphic
as $\pi_{f_{\kappa},l}^U \neq 0$.
Therefore $\pi_{f_{\kappa},l}^U$ and $\pi_m(\rho_{\kappa}, N_{\kappa})^U$
are isomorphic irreducible $\mathcal{H}_U$ modules.  Because $U$ is \textit{bonne} we deduce that
$\pi_{f_{\kappa},l} \cong \pi_m(\rho_{\kappa}, N_{\kappa})$.
\\ \\
In case (ii) when monodromy does not vanish an identical argument to above yields the result. At the $\kappa$ where monodromy vanishes a similar argument
works taking the $U$ fixed vectors of the special sub-representation of $\pi_m(\rho_{\kappa}, N_{\kappa})$.  
\\ \\
in case (iii) the same argument as above works where $\pi_m(\rho_{\kappa}, N_{\kappa})$ is irreducible (i.e. when the ratio of the satake parameters
is not $l^{ \pm 1}$.  When this does occur $\pi_m(\rho_{\kappa}, N_{\kappa})^U$
is reducible and by proposition 17 we know that there is a finite dimensional
$\mathcal{H}_U$ module which surjects onto $\pi_{f_{\kappa},l}^U$, whose
semi-simplification is isomorphic the the semi-simplification of $\pi_m(\rho_{\kappa}, N_{\kappa})^U$.  Because $U$ is \textit{bonne} this corresponds to an a smooth irreducible
$\glql$ representation $\pi$ which surjects onto $\pi_{f_{\kappa},l}$.  This
completes the proof.
\end{proof}

\bibliographystyle{abbrv}
\bibliography{thesis}

\begin{thebibliography}{10}

\bibitem{AW}
M.~F. Atiyah and C.~T.~C. Wall.
\newblock Cohomology of groups.
\newblock In {\em Algebraic Number Theory (Proc. Instructional Conf., Brighton,
  1965)}, pages 94--115. Thompson, Washington, D.C., 1967.

\bibitem{CB}
J.~N. Bernstein.
\newblock Le ``centre'' de {B}ernstein.
\newblock In {\em Representations of reductive groups over a local field},
  Travaux en Cours, pages 1--32. Hermann, Paris, 1984.
\newblock Edited by P. Deligne.

\bibitem{BUM}
D.~Bump.
\newblock {\em Automorphic forms and representations}, volume~55 of {\em
  Cambridge Studies in Advanced Mathematics}.
\newblock Cambridge University Press, Cambridge, 1997.

\bibitem{BE}
K.~Buzzard.
\newblock Eigenvarieties.
\newblock To appear in the proceedings of the LMS Durham Conference on
  L-functions and arithmetic (Durham 2004).

\bibitem{BIF}
K.~Buzzard.
\newblock Families of modular forms.
\newblock {\em J. Th\'eor. Nombres Bordeaux}, 13(1):43--52, 2001.
\newblock 21st Journ\'ees Arithm\'etiques (Rome, 2001).

\bibitem{BA}
K.~Buzzard.
\newblock Analytic continuation of overconvergent eigenforms.
\newblock {\em J. Amer. Math. Soc.}, 16(1):29--55 (electronic), 2003.

\bibitem{C}
H.~Carayol.
\newblock Sur les repr\'esentations {$l$}-adiques associ\'ees aux formes
  modulaires de {H}ilbert.
\newblock {\em Ann. Sci. \'Ecole Norm. Sup. (4)}, 19(3):409--468, 1986.

\bibitem{CHE}
G.~Chenevier.
\newblock Familles {$p$}-adiques de formes automorphes pour {${\rm GL}\sb n$}.
\newblock {\em J. Reine Angew. Math.}, 570:143--217, 2004.

\bibitem{CME}
R.~Coleman and B.~Mazur.
\newblock The eigencurve.
\newblock In {\em Galois representations in arithmetic algebraic geometry
  (Durham, 1996)}, volume 254 of {\em London Math. Soc. Lecture Note Ser.},
  pages 1--113. Cambridge Univ. Press, Cambridge, 1998.

\bibitem{CCO}
R.~F. Coleman.
\newblock Classical and overconvergent modular forms.
\newblock {\em Invent. Math.}, 124(1-3):215--241, 1996.

\bibitem{CCS}
R.~F. Coleman.
\newblock On the coefficients of the characteristic series of the
  {$U$}-operator.
\newblock {\em Proc. Nat. Acad. Sci. U.S.A.}, 94(21):11129--11132, 1997.
\newblock Elliptic curves and modular forms (Washington, DC, 1996).

\bibitem{BMF}
R.~F. Coleman.
\newblock {$p$}-adic {B}anach spaces and families of modular forms.
\newblock {\em Invent. Math.}, 127(3):417--479, 1997.

\bibitem{BC}
B.~Conrad.
\newblock Irreducible components of rigid spaces.
\newblock {\em Ann. Inst. Fourier (Grenoble)}, 49(2):473--541, 1999.

\bibitem{D}
P.~Deligne.
\newblock Formes modulaires et repr\'esentations de {${\rm GL}(2)$}.
\newblock In {\em Modular functions of one variable, II (Proc. Internat. Summer
  School, Univ. Antwerp, Antwerp, 1972)}, pages 55--105. Lecture Notes in
  Math., Vol. 349. Springer, Berlin, 1973.

\bibitem{DR}
P.~Deligne and M.~Rapoport.
\newblock Les sch\'emas de modules de courbes elliptiques.
\newblock In {\em Modular functions of one variable, II (Proc. Internat. Summer
  School, Univ. Antwerp, Antwerp, 1972)}, pages 143--316. Lecture Notes in
  Math., Vol. 349. Springer, Berlin, 1973.

\bibitem{DI}
F.~Diamond and J.~Im.
\newblock Modular forms and modular curves.
\newblock In {\em Seminar on Fermat's Last Theorem (Toronto, ON, 1993--1994)},
  volume~17 of {\em CMS Conf. Proc.}, pages 39--133. Amer. Math. Soc.,
  Providence, RI, 1995.

\bibitem{DT}
F.~Diamond and R.~Taylor.
\newblock Nonoptimal levels of mod {$l$} modular representations.
\newblock {\em Invent. Math.}, 115(3):435--462, 1994.

\bibitem{E}
M.~Emerton.
\newblock A local-global compatibility conjecture in the {$p$}-adic {L}anglands
  programme for {${\rm GL}\sb {2/{\Bbb Q}}$}.
\newblock {\em Pure Appl. Math. Q.}, 2(2):279--393, 2006.

\bibitem{FVP}
J.~Fresnel and M.~van~der Put.
\newblock {\em Rigid analytic geometry and its applications}, volume 218 of
  {\em Progress in Mathematics}.
\newblock Birkh\"auser Boston Inc., Boston, MA, 2004.

\bibitem{HT}
M.~Harris and R.~Taylor.
\newblock {\em The geometry and cohomology of some simple {S}himura varieties},
  volume 151 of {\em Annals of Mathematics Studies}.
\newblock Princeton University Press, Princeton, NJ, 2001.
\newblock With an appendix by Vladimir G. Berkovich.

\bibitem{JL}
H.~Jacquet and R.~P. Langlands.
\newblock {\em Automorphic forms on {${\rm GL}(2)$}}.
\newblock Springer-Verlag, Berlin, 1970.
\newblock Lecture Notes in Mathematics, Vol. 114.

\bibitem{K}
N.~M. Katz.
\newblock {$p$}-adic properties of modular schemes and modular forms.
\newblock In {\em Modular functions of one variable, III (Proc. Internat.
  Summer School, Univ. Antwerp, Antwerp, 1972)}, pages 69--190. Lecture Notes
  in Mathematics, Vol. 350. Springer, Berlin, 1973.

\bibitem{KIS}
M.~Kisin.
\newblock Overconvergent modular forms and the {F}ontaine-{M}azur conjecture.
\newblock {\em Invent. Math.}, 153(2):373--454, 2003.

\bibitem{SAI}
T.~Saito.
\newblock Modular forms and {$p$}-adic {H}odge theory.
\newblock {\em Invent. Math.}, 129(3):607--620, 1997.

\bibitem{S}
J.-P. Serre.
\newblock Formes modulaires et fonctions z\^eta {$p$}-adiques.
\newblock In {\em Modular functions of one variable, III (Proc. Internat.
  Summer School, Univ. Antwerp, 1972)}, pages 191--268. Lecture Notes in Math.,
  Vol. 350. Springer, Berlin, 1973.

\bibitem{SL}
J.-P. Serre.
\newblock {\em Local fields}, volume~67 of {\em Graduate Texts in Mathematics}.
\newblock Springer-Verlag, New York, 1979.
\newblock Translated from the French by Marvin Jay Greenberg.

\bibitem{ST}
J.-P. Serre and J.~Tate.
\newblock Good reduction of abelian varieties.
\newblock {\em Ann. of Math. (2)}, 88:492--517, 1968.

\bibitem{T}
J.~Tate.
\newblock Number theoretic background.
\newblock In {\em Automorphic forms, representations and $L$-functions (Proc.
  Sympos. Pure Math., Oregon State Univ., Corvallis, Ore., 1977), Part 2},
  Proc. Sympos. Pure Math., XXXIII, pages 3--26. Amer. Math. Soc., Providence,
  R.I., 1979.

\bibitem{TAY}
R.~Taylor.
\newblock Galois representations associated to {S}iegel modular forms of low
  weight.
\newblock {\em Duke Math. J.}, 63(2):281--332, 1991.

\end{thebibliography}
\end{document}